\newcommand{\blue}[1]{#1} 
\DeclareMathOperator*{\argmin}{argmin}
\def\norm#1{\|#1 \|}
\def\inprod#1#2{\langle#1,\,#2 \rangle}
\def\cA{\mathcal{A}}  
\def\cC{\mathcal{C}}  \def\cP{\mathcal{P}} \def\cT{\mathcal{T}}
  \def\cG{\mathcal{G}} 
 \def\cM{\mathcal{M}} \def\cN{\mathcal{N}} \def\cW{\mathcal{W}}
\def\cS{\mathcal{S}} \def\cE{\mathcal{E}}  \def\cV{\mathcal{V}}
 \def\cX{\mathcal{X}} \def\cY{\mathcal{Y}} 
 \def\cN{\mathcal{N}} \def\cL{\mathcal{L}}
\def\Sn{\mathcal{S}^n}
\def\svec{{\rm {\bf svec}}}   
\def\vec{{\rm {\bf vec}}}     \def\diag{{\rm Diag}}
\def\Range{\textup{Range}}
\def\Diag{\textup{Diag}}
\def\sig{\sigma}
\def\by{\bar{y}}
\def\SGS{\mbox{sGS}}
\newlength{\len}
 \newtheorem{prop}{Proposition}[section]
 \newtheorem{remark}{Remark}[section]
 \newtheorem{assumption}{Assumption}[section]
\newcolumntype{H}{>{\setbox0=\hbox\bgroup}c<{\egroup}@{}}
  \providecommand\BibTeX{{%
    \normalfont B\kern-0.5em{\scshape i\kern-0.25em b}\kern-0.8em\TeX}}}
\begin{document}

\title{QPPAL: A two-phase proximal augmented Lagrangian	method for high dimensional convex quadratic programming problems}

\renewcommand{\shorttitle}{QPPAL: A two-phase proximal ALM for convex QPs}

\author{Ling Liang}
\affiliation{%
  \institution{Department of Mathematics, National University of Singapore}
  \streetaddress{10 Lower Kent Ridge Road}
  \country{Singapore}
  \postcode{119076}}
\email{email:liang.ling@u.nus.edu}

\author{Xudong Li}
\authornote{The research of this author is supported by the National Key R\&D Program of China 2020YFA0711900, 2020YFA0711901, the National Natural Science Foundation of China (11901107), and the Young Elite Scientists Sponsorship Program by CAST (2019QNRC001).}
\affiliation{%
  \institution{{School of Data Science}, Fudan University}
  \city{Shanghai}
  \country{China}}
\email{email:lixudong@fudan.edu.cn}

\author{Defeng Sun}
\authornote{The research of this author is supported by NSFC/RGC Joint Research Scheme under Grant {\rm ${\rm N}_{-}$}PolyU504/19.}
\affiliation{%
  \institution{Department of Applied Mathematics, the Hong Kong Polytechnic University}
  \streetaddress{Hung Hom}
  \country{Hong Kong}
}
\email{email:defeng.sun@polyu.edu.hk}

\author{Kim-Chuan Toh}
\authornote{The research of this author is supported by the Ministry of Education, Singapore, under its Academic Research Fund Tier 3 grant call (MOE-2019-T3-1-010).}
\affiliation{%
 \institution{Department of Mathematics and Institute of Operations Research and Analytics, National University of Singapore}
 \streetaddress{10 Lower Kent Ridge Road}
 \country{Singapore}
 \postcode{119076}}
\email{email:mattohkc@nus.edu.sg}

\renewcommand{\shortauthors}{Liang et al.}

\begin{abstract}
  In this paper,  we aim to solve high dimensional convex quadratic programming (QP) problems with a large number of quadratic terms, linear equality and inequality constraints. In order to solve the targeted {\bf QP} problems to a desired accuracy efficiently, we develop a two-phase {\bf P}roximal {\bf A}ugmented {\bf L}agrangian method {(QPPAL)}, with Phase I to generate a reasonably good initial point to warm start Phase II to obtain an accurate solution efficiently. More specifically, in Phase I, based on the recently developed symmetric Gauss-Seidel (sGS) decomposition technique, we design a novel sGS based semi-proximal augmented Lagrangian method for the purpose of finding a solution of low to medium accuracy. Then, in Phase II, a proximal augmented Lagrangian algorithm is proposed to obtain a more accurate solution efficiently. Extensive numerical results evaluating the performance of {QPPAL} against {existing state-of-the-art solvers Gurobi, OSQP and QPALM} are presented to demonstrate the high efficiency and robustness of our proposed algorithm for solving various classes of large-scale convex QP problems. {The MATLAB implementation of the software package QPPAL is available at: \url{https://blog.nus.edu.sg/mattohkc/softwares/qppal/}.}
\end{abstract}

%

\keywords{convex quadratic programming, symmetric Gauss-Seidel, augmented Lagrangian method}

\maketitle

\section{Introduction}\label{intro}
We begin with some notation that will be used throughout the paper. Let $\mathcal{S}_+^n$ be the cone of $n\times n$ symmetric and positive semidefinite matrices in the space of $n\times n$ symmetric matrices $\mathcal{S}^n$ endowed with the standard trace inner product $\inprod{\cdot}{\cdot}$ and the Frobenius norm $\norm{\cdot}$. The range space of a matrix $Q\in \cS^n$ is denoted by $\mathrm{Range}(Q)$. Let $\cX$ be any real finite dimensional Euclidean space and $\cM:\cX \rightarrow \cX$ be any self-adjoint positive semidefinite linear operator, denote $\|x\|_\cM  = \sqrt{\inprod{x}{\cM x}}$ and $\mathrm{dist}_{\cM}(x, C) = \inf _{x'\in C} \norm{x' - x}_{\cM}$ for any $x\in \cX$ and any set $C\subseteq \cX$. For a given closed proper convex function $\theta:\Re^n\rightarrow (-\infty, +\infty]$, the effective domain of $\theta$ is defined by $\mathrm{dom}\,\theta = \{x\in \Re^n:\theta(x)<\infty\}$, the subdifferential of $\theta$ at $x\in \mathrm{dom}\, \theta$ is defined by $\partial \theta (x) = \{v\in \Re^n:\theta(y)\geq \theta (x) + \inprod{v}{y - x},\; \forall y\in \Re^n\}$ and the convex conjugate function $\theta^*:\Re^n\rightarrow (-\infty, +\infty]$ of $\theta$ is defined by $\theta^*(y):= \sup \{ \inprod{y}{x} - \theta(x):x\in \Re^n\} $. {In particular, for a given closed convex set $C\subseteq \cX$, we use $\delta_C(\cdot)$ to denote the indicator function of $C$ {with $\delta_C^*(\cdot)$ being  its convex conjugate function}. Moreover, the relative interior of $C$ is denoted by $\mathrm{ri}(C)$}. For more details on convex sets and convex functions, we recommend the monograph \cite{rockafellar1970convex}.

Consider the high-dimensional convex quadratic programming (QP) problem in the following standard form:
\begin{equation*}
    ({\bf P}) \quad  \min_{x\in \Re^n} \left\{\frac{1}{2}\inprod{x}{Qx} + \inprod{c}{x}
	\mid A x=  b, \; x \in \cC \right\},
\end{equation*}
{where $c\in \Re^n$, $ Q \in \Sn_+$, $A \in \Re^{m\times n}$, $b\in \Re^m$, {$\cC = \left\{x\in\Re^n \; : l \le x \le u\right\}$ with the given vectors {$\ell, u \in \Re^n$} satisfying $-\infty \leq l\leq u\leq +\infty$}. Note that $ {\bf (P)}$ covers convex QP problems with linear inequality constraints by adding slack variables. However, we only consider $ {\bf (P)} $ in the theoretical development for the purpose of notational simplicity. \blue{Moreover, we assume for the rest of this paper that problem {\bf (P)} satisfies the Slater's condition.}
	
In this paper, we are interested in the case where the} dimensions $n$ and/or $m$ are extremely large. Since $n$ is huge, one generally cannot expect an explicitly matrix representation for $Q$. Even if it is available, one may encounter severe memory issues when trying to store a large-scale and dense matrix $Q$.  Hence, in this paper, we only assume that $Q$ is defined as a linear operator on $\Re^n$, and its matrix representation is not needed explicitly, i.e., for any given $x\in\Re^n$, $Qx$ can be obtained at a reasonable cost but the matrix representation of $Q$ with respect to the standard basis in $\Re^n$ may not be available.

{The} {restricted-Wolfe} dual \cite{Li2018QSDPNAL} of ({\bf P}) can be written in the form of
\begin{equation*}
    ({\bf D}) \quad    \max_{{(z, w, y)\in \Re^n\times\Re^n\times\Re^m} } \left\{-\delta_{\cC}^*(-z) -\frac{1}{2}\inprod{w}{Qw} + \inprod{b}{y}
	\mid z - Qw  + A^* y = c, \;
	w\in\mathcal{W}\right\},
\end{equation*}
where $\cW$ is any subspace of $\Re^n$ containing $ \textup{Range}(Q)$. In this paper, we fix $\cW = \Range(Q)$. We will see in the subsequent analysis that this choice in fact plays an important role in the design of our algorithms. Problem ({\bf D}) belongs to a general class of  multi-block convex composite quadratic optimization problems of the form:
\begin{equation}
    \label{eq-dd-M}
	\min_{{y_i\in \cY_i,i = 1,\dots, p}} \Big\{ \theta(y_1) + f(y_1,y_2,\ldots,y_p) \,\mid\,
		\cA_1^*y_1 + \cA_2^*y_2 + \cdots + \cA_p^*y_p = c \Big\},
\end{equation}
where $p$ is a given positive integer, $\theta:\cY_1\rightarrow(-\infty,+\infty]$ is a closed proper convex function whose proximal mapping is assumed to be computable at a moderate cost, $f:\cY_1\times\cY_2\times\ldots\times\cY_p\rightarrow \Re$ is a convex quadratic function (not necessarily separable), $\cA_i:\cX\rightarrow \cY_i, \, i=1,\ldots,p$ are linear maps, $\cY_1, \ldots, \cY_p$ and $\cX$ are all real finite dimensional Euclidean spaces each equipped with an inner product $\inprod{\cdot}{\cdot}$ and its induced norm $\norm{\cdot}.$ For notational convenience, we let $\cY := \cY_1\times\cY_2\times,\ldots,\cY_p$, and write $y {:=} (y_1,y_2,\ldots,y_p)\in \cY $. Moreover, define the linear map $\cA:\cX \rightarrow \cY$ whose adjoint map is given by $\cA^*y = \sum_{i=1}^p\cA_i^*y_i,\; \forall y\in \cY$.

Convex QP has been extensively studied for the last few decades, see, for example the influential works \cite{OSQP,dantzig1961quadratic,Dant:1963,sun1993convergence,gould2000quadratic,gould2001solution,gould1985practical,cottle1964note,cottle1962symmetric,wright1999numerical,ye1998complexity} and references therein. One may also refer to the QP webpage\footnote{\url{http://www.numerical.rl.ac.uk/people/nimg/qp/qp.html}} for more information. {To the best of our knowledge, all the major software packages for solving convex QP problems are based on active set methods \cite[Chapter 16.4]{wright1999numerical}, interior point methods \cite{IPMNesterov}, operator splitting methods \cite{OSQP} or primal based (proximal) augmented Lagrangian methods \cite{hermans2019qpalm, hermans2020qpalm}}. Among these methods, active set methods have the appealing feature to drop many of the inactive constraints to make the problem smaller in scale and hence much easier to solve. However, the worst-case iteration complexity of active set methods can be exponentially large with respect to the problem size and they may take a long time to solve the problem when the active sets are not estimated correctly. Achieving great progress  over the past few decades, interior point method based solvers are perhaps the most notable ones for solving large-scale convex QPs problems. For example, as a representative interior point method based solver, Gurobi \cite{gurobiv6.0}\footnote{Based on the results presented in \url{http://plato.asu.edu/ftp/barrier.html}} is a highly optimized state-of-the-art solver for large-scale convex QP problems and is often used as a computational backbone of many real world applications. However, for solving high dimensional convex QP problems with a large number of constraints, interior point method based solvers (e.g., Gurobi) may encounter inherent numerical difficulties. Indeed, the computational costs of these methods become prohibitively expensive when the systems of linear equations to be solved are fully dense or when the corresponding sparse Cholesky factors are dense. Unlike interior point methods which are generally considered as second-order methods, first-order methods such as operator splitting methods (including alternating direction methods of multipliers) have been at the forefront of the recent progress in solving convex optimization problems. For example, a well-known operator splitting algorithm for solving convex QP problems is the open source solver OSQP studied in \cite{OSQP}. First-order methods have the appealing feature that the per-iteration cost is quite cheap and hence they are highly scalable. However, these methods generally can only return approximate solutions with low to medium accuracy and they often stagnate even before delivering a crude approximate solution. Therefore, if more accurate solutions are needed, first order methods may not be sufficient. {Recently, a highly optimized and efficient solver, named QPALM, was {developed by applying} the proximal ALM method to the primal problem {\bf (P)}; see \cite{hermans2019qpalm, hermans2020qpalm}. QPALM is demonstrated to be quite efficient for solving many sparse QP problems. However, based on our experience gained from many numerical tests, QPALM is observed to be much less efficient under the high-dimensional settings considered in the present paper.} Lastly, as far as we are aware of, the major solvers just mentioned and their variants all require an explicit matrix representation of $Q$. Thus, there is clearly a need to design an algorithm which can handle high dimensional convex QP problems beyond the scope covered by highly optimized solvers such as {Gurobi, OSQP and QPALM}.

We shall next raise the following question: Can we design a highly efficient, scalable and robust algorithm for solving convex QP problems having the following three characteristics? (a) the matrix representation of $Q$ may not be available; (b) $Q$ does not have {a} favourable sparsity pattern; (c) the number of linear constraints is extremely large or there are many dense linear constraints. We try to provide a positive answer to the above question by embracing the influential {(dual based)} augmented Lagrangian method (ALM) for solving the more general problem \eqref{eq-dd-M}. In our opinion, the {(dual based)} ALM is perhaps the most promising algorithm for problem \eqref{eq-dd-M} which has some of or all the three characteristics just mentioned. To briefly explain the idea of ALM, let $\sigma > 0$ be a given parameter and the augmented Lagrangian function associated with \eqref{eq-dd-M} is defined by
\begin{equation*}
	\cL_\sigma(y;x):= \theta(y_1) + f(y) + \inprod{x}{\cA^*y - c}+ \frac{\sigma}{2} \norm{\cA^*y - c}^2,\quad {(x,y)\in \cX\times \cY.}
\end{equation*}
Starting with any initial points $y^0 \in {\rm dom}(\theta)\times\cY_2\times\ldots\times\cY_p $ and $x^0 \in \cX$, ALM performs the following steps iteratively:
\begin{align}
	y^{k+1} = &\; \textup{argmin}_{{y\in \cY}}
	\;\cL_\sigma(y;x^k),
	\label{eq-auglagl-y}
	\\
	x^{k+1} =& \; \displaystyle  x^k +\tau \sigma (\cA^*y^{k+1}  -c),
	\label{eq-auglagl-x}
\end{align}
where $ \tau\in(0,2)$ guarantees the convergence. However, in the high dimensional setting, the nonseparable quadratic terms and the composite structure in the inner subproblem \eqref{eq-auglagl-y} make the task of computing $y^{k+1}$ exactly or with high accuracy extremely difficult and expensive. Fortunately, this difficulty could be alleviated if a good initial point is provided for the ALM, in light of the experience gained from a series of works \cite{SDPNAL+, Li2018QSDPNAL, Li2019LP, SDPNAL} on developing elegant theoretical properties and efficient implementation of the (proximal) ALM for solving several classes of optimization problems. In fact, the ALM equipped with a semismooth Newton method for solving the ALM inner subproblems is shown to be a highly efficient approach for solving \eqref{eq-dd-M} to a high accuracy, if the initial iterate lies in the fast convergence region of the semismooth Newton method. In this paper, by further exploring the idea in the Schur complement based semi-proximal alternating direction method of multipliers (ADMM) proposed in the recent papers~\cite{LiSunToh_indefinite2014, scbADMM2016}, we are able to propose a symmetric Gauss-Seidel based semi-proximal ALM to efficiently solve the non-separable convex composite optimization problem \eqref{eq-dd-M} to low {\em or} medium accuracy. Therefore, we shall use this algorithm as a warm-starting scheme to provide a reasonably good initial point for the ALM. Using this initial point, we then propose a proximal ALM to compute a highly accurate solution efficiently. Consequently, we come up with a two-phase algorithm. {As we shall see later in the numerical experiments, the proposed algorithmic framework is shown to be more suitable for large-scale convex QP problems having the aforementioned characteristics compared to interior point methods, operator splitting algorithms and the primal based ALM. Indeed, we observe that {applying the proximal ALM to the {(restricted-Wolfe)} dual problem {\bf (D)} appears to be} more effective.}


The remaining parts of this paper are organized as follows. In Section \ref{sec-isPALM}, we first propose an inexact semi-proximal augmented Lagrangian method (isPALM) and establish its convergence. Then, as our phase I algorithm for solving the convex composite quadratic programming model \eqref{eq-dd-M}, a symmetric Gauss-Seidel based inexact semi-proximal augmented Lagrangian method (sGS-isPALM) is designed via incorporating the sGS decomposition technique with the aforementioned isPALM algorithm. In Section \ref{sec-qppal}, we propose our two-phase algorithm QPPAL. In QPPAL Phase I, the sGS-isPALM is directly applied to solve the convex quadratic programming problem ({\bf D}). Then, in QPPAL Phase II, a proximal ALM, with the semismooth Newton method for solving the inner minimization problems, is proposed and the convergence are also established. In section \ref{sec-numerical}, we discuss key implementation issues and present numerical experiments to evaluate our QPPAL in solving some classes of large-scale convex QP problems. We conclude our paper in Section \ref{sec-conclusion}.

\section{An inexact semi-proximal augmented Lagrangian method}
\label{sec-isPALM}
In this section, by revisiting the convergence of the inexact semi-proximal ALM and applying the symmetric Gauss-Seidel (sGS) decomposition technique to the convex composite quadratic programming model \eqref{eq-dd-M}, we shall propose an sGS based inexact semi-proximal ALM method with convergence guarantees. {These theoretical results have been analyzed extensively in the recent work \cite{ChenipALM} and references therein. In this work,
 we focus on the {detailed} numerical implementation of the methods for solving convex QPs. Hence, we refrain from adding too much theoretical detail on the algorithm}.

To begin, we first consider the following linearly constrained convex optimization problem
\begin{equation}
    \label{aug-problem}
	\min_{{v\in \cV}} \Big\{ g(v)    \,\mid\, \cG^*v  = c \Big\},
\end{equation}
where $g:\cV \rightarrow (-\infty,+\infty]$ is a closed proper convex function, $\cG: \cX\rightarrow \cV$ is a given linear map and $\cV$ is a real finite dimensional Euclidean space.
We make the following standard {solvability} assumption for \eqref{aug-problem}.
\begin{assumption}
	\label{assumption:KKT}
	The solution set to the following KKT system of \eqref{aug-problem}
	\begin{equation}\label{eq-KKT-g}
		0\in\partial g(\bar v) + \cG\bar x,\quad \cG^*\bar v = c, \quad (\bar{x}, {\bar{v})\in \cX\times \cV}
	\end{equation}
	is nonempty.
\end{assumption}

Let $\sigma > 0$ be a given parameter. The augmented Lagrangian function associated with (\ref{aug-problem}) is given as follows:
\begin{equation*}
	\cL_{\sigma}(v;x) = g(v)
	+\inprod{x}{\cG^*v-c}
	+ \frac{\sigma}{2}\norm{\cG^*v-c}^2,\quad (v,x)\in \cV \times \cX.
\end{equation*}
Let $\partial g$ be the subdifferential mapping of $g$.  Then $\partial g$ is a maximally monotone operator. Hence, there exists a self adjoint positive semidefinite linear operator  $\Sigma_g $   such that  for all  $v,  \tilde{v} \in {\rm {dom}}(g)$, $\zeta\in \partial
g(v )$, and $ \tilde{\zeta}\in \partial g(\tilde{v})$, it holds that (see e.g., \cite[Theorem 12.17]{rockafellar2009variational})
\begin{equation}\label{monosub2}
	\langle \zeta-\tilde{\zeta}, v -\tilde{v}\rangle \geq
	\|v -\tilde{v}\|^2_{\Sigma_g}.
\end{equation}
The inexact semi-proximal augmented Lagrangian method ({isPALM}) for solving \eqref{aug-problem} is described in Fig.~\ref{alg-isalm}.

\begin{figure}[ht!]
	\centerline{\fbox{\parbox{\textwidth}{
				{\bf Algorithm {isPALM}: An inexact semi-proximal augmented Lagrangian method for \eqref{aug-problem}.}
				\\
				Let $\sigma >0$ and $\tau\in(0,2)$ be given  parameters, $\{\varepsilon_k\}_{k\ge 0}$ be a nonnegative summable sequence. Let $\cT$ be a given self-adjoint positive semidefinite linear operator defined on $\cV$ such that $\cN := \Sigma_g + \cT + \sigma\cG\cG^* \succ 0 $. Choose $(v^0, x^0)\in\mbox{dom}(g) \times\cX.$ Perform the following steps in each iteration for {$k = 0, 1, 2\dots,$.}
				\begin{description}
					\item [Step 1.] Compute
					\begin{equation}
						\label{aug-v}
						v^{k+1} \approx{} \bar v^{k+1}:= \; \argmin_{v\in \cV} \;\cL_{\sigma}(v;x^k) + \frac{1}{2}\norm{v - v^k}^2_{\cT}
					\end{equation}
					such that there exists $d_k$ satisfying $\norm{\cN^{-1/2}d^{k}}\leq \varepsilon_k$ and
					\begin{equation}
						\label{aug-d}
						d^{k} \in{} \partial_v\cL_{\sigma}(v^{k+1};x^k) + \cT(v^{k+1}-v^k).
					\end{equation}
					\item [Step 2.] Compute
					$x^{k+1} = x^k + \tau \sigma(\cG^*v^{k+1} -c).$
				\end{description}
	}}}
	\caption{Algorithm isPALM. \label{alg-isalm}}
\end{figure}

The global convergence result for Algorithm {isPALM} under certain technical assumptions is presented as follows whose proof can be taken directly from the one in \cite[Theorem 3.1]{ChenipALM}.

\begin{theorem}\label{thmisPALM}
	Assume that Assumption \ref{assumption:KKT} holds and that $\Sigma_g + \cT + \sigma\cG\cG^*\succ 0$. Let $\{(v^k,x^k)\}$ be generated from Algorithm {isPALM}.
	Then the following results hold:
	\begin{enumerate}
		\item [{\rm (a)}] the sequence $\{(v^k, x^k)\}$ is bounded;
		\item [{\rm (b)}] any accumulation point of the sequence $\{(v^k, x^k)\}$ solve the KKT system of \eqref{aug-problem};
		\item [{\rm (c)}] the whole sequence $\{(v^k, x^k)\}$ converges to a solution to the KKT system of \eqref{aug-problem}.
	\end{enumerate}
\end{theorem}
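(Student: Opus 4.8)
Since this is the convergence theorem for an inexact semi-proximal ALM, the plan is the classical one for such schemes (the inexact proximal-point / ALM machinery in the spirit of the works cited in the excerpt, and in particular \cite[Theorem 3.1]{ChenipALM}): recast isPALM as a quasi-Fej\'er monotone iteration with respect to the (nonempty, by Assumption~\ref{assumption:KKT}) KKT solution set of \eqref{aug-problem}, drive all KKT residuals to zero, extract a KKT accumulation point, and then anchor the Fej\'er inequality at \emph{that} point to upgrade subsequential convergence to convergence of the whole sequence.

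First I would record the exact optimality characterization of the inexact subproblem. Writing $\hat x^{k+1}:=x^k+\sigma(\cG^*v^{k+1}-c)$ we have $\partial_v\cL_\sigma(v^{k+1};x^k)=\partial g(v^{k+1})+\cG\hat x^{k+1}$, so \eqref{aug-d} reads $d^k-\cT(v^{k+1}-v^k)-\cG\hat x^{k+1}\in\partial g(v^{k+1})$. Fixing any KKT point $(\bar v,\bar x)$, so that $-\cG\bar x\in\partial g(\bar v)$ and $\cG^*\bar v=c$, and applying the strengthened monotonicity \eqref{monosub2} to these two inclusions gives $\langle d^k-\cT(v^{k+1}-v^k)-\cG(\hat x^{k+1}-\bar x),\,v^{k+1}-\bar v\rangle\ge\|v^{k+1}-\bar v\|^2_{\Sigma_g}$. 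Using $\langle\cG(\hat x^{k+1}-\bar x),v^{k+1}-\bar v\rangle=\langle\hat x^{k+1}-\bar x,\cG^*v^{k+1}-c\rangle$ and the multiplier update $x^{k+1}=x^k+\tau\sigma(\cG^*v^{k+1}-c)$ to eliminate $\hat x^{k+1}$ in favour of $x^{k+1},x^k$, routine algebra yields a descent inequality of the form
\[
\phi_{k+1}(\bar v,\bar x)+\big(\text{nonnegative terms in }v^{k+1}-v^k\text{ and }\cG^*v^{k+1}-c\big)\ \le\ \phi_k(\bar v,\bar x)+\langle d^k,\,v^{k+1}-\bar v\rangle,
\]
with $\phi_k(\bar v,\bar x)=\tfrac{1}{\tau\sigma}\|x^k-\bar x\|^2+\|v^k-\bar v\|^2_{\Sigma_g+\cT}$, the dissipated terms carrying the factor $\tfrac{2-\tau}{\tau}>0$ coming from $\tau\in(0,2)$, and the error controlled by Cauchy--Schwarz as $|\langle d^k,v^{k+1}-\bar v\rangle|\le\|\cN^{-1/2}d^k\|\,\|v^{k+1}-\bar v\|_\cN\le\varepsilon_k\,\|v^{k+1}-\bar v\|_\cN$. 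Here $\cN=\Sigma_g+\cT+\sigma\cG\cG^*\succ0$ is exactly what makes $\|\cdot\|_\cN$ dominate both $\|\cdot\|_{\Sigma_g+\cT}$ and $\sqrt{\sigma}\|\cG^*\cdot\|$, so the error can be absorbed; then a standard summability lemma (using $\sum_k\varepsilon_k<\infty$) shows $\{\phi_k(\bar v,\bar x)\}$ converges and $\sum_k\big(\|v^{k+1}-v^k\|_\cN^2+\|\cG^*v^{k+1}-c\|^2\big)<\infty$.

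From the convergence of $\{\phi_k\}$ and $\cN\succ0$ I would read off boundedness of $\{(v^k,x^k)\}$, which is (a); from summability, $v^{k+1}-v^k\to0$, $\cG^*v^{k+1}-c\to0$, and (since $\|\cN^{-1/2}d^k\|\le\varepsilon_k\to0$ and $\cT(v^{k+1}-v^k)\to0$) all the residuals defining \eqref{eq-KKT-g} tend to zero. For (b), take a subsequence $(v^{k_j+1},x^{k_j+1})\to(v^\infty,x^\infty)$; since $\hat x^{k_j+1}-x^{k_j+1}=(1-\tau)\sigma(\cG^*v^{k_j+1}-c)\to0$, passing to the limit in $d^{k_j}-\cT(v^{k_j+1}-v^{k_j})-\cG\hat x^{k_j+1}\in\partial g(v^{k_j+1})$ using the closedness of the graph of $\partial g$, together with $\cG^*v^{k_j+1}-c\to0$, shows $(v^\infty,x^\infty)$ solves \eqref{eq-KKT-g}. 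For (c), apply the descent inequality with the \emph{particular} point $(\bar v,\bar x)=(v^\infty,x^\infty)$: $\{\phi_k(v^\infty,x^\infty)\}$ converges, while along the subsequence $\phi_{k_j}(v^\infty,x^\infty)\to0$ (both the $x$-term and $\|v^{k_j}-v^\infty\|_{\Sigma_g+\cT}$ vanish since $v^{k_j}\to v^\infty$), hence $\phi_k(v^\infty,x^\infty)\to0$. This gives $x^k\to x^\infty$ at once; for $v^k$, combine $\|v^k-v^\infty\|^2_{\Sigma_g+\cT}\to0$ with $\|\cG^*(v^k-v^\infty)\|=\|\cG^*v^k-c\|\to0$ to get $\|v^k-v^\infty\|_\cN^2\to0$, and $\cN\succ0$ forces $v^k\to v^\infty$.

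The main obstacle — and essentially the only delicate point — is assembling the descent inequality so that all pieces fit: the inexactness term $d^k$ must be matched to the $\cN^{-1/2}$-scaling in the stopping criterion (otherwise the errors are not summable), the relaxation factor $\tau\in(0,2)$ must be tracked through the multiplier update so that the dissipated term $\tfrac{2-\tau}{\tau}\sigma\|\cG^*v^{k+1}-c\|^2$ stays nonnegative, and, because $\Sigma_g$ and $\cT$ are only positive \emph{semi}definite, one cannot conclude convergence of $v^k$ from the Fej\'er function alone — the feasibility residual $\cG^*v^k-c\to0$ must be invoked to promote seminorm convergence to genuine convergence, which is precisely the role of the design condition $\cN\succ0$ built into Algorithm~isPALM. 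With that single inequality in hand, parts (a)--(c) follow by the now-standard quasi-Fej\'er argument, exactly as in \cite[Theorem 3.1]{ChenipALM}.
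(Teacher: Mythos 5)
Your proposal is correct and follows essentially the same route as the paper, which gives no proof of its own but defers to \cite[Theorem 3.1]{ChenipALM}: the quasi-Fej\'er descent inequality anchored at a KKT point, the absorption of the summable inexactness via the $\cN^{-1/2}$-scaled stopping criterion, and the upgrade from $\Sigma_g+\cT$-seminorm convergence to genuine convergence using $\cN\succ 0$ together with the vanishing feasibility residual are exactly the ingredients of that standard argument. No gaps.
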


\subsection{A symmetric Gauss-Seidel based inexact semi-proximal ALM}
In the remaining part of this section, we focus on the convex composite quadratic programming model \eqref{eq-dd-M} where the convex quadratic function $f:\cY\rightarrow \Re$ is defined by $f(y) = \frac{1}{2}\inprod{y}{ \cP y} - \inprod{b}{y}$ with $b\in\cY$ and $\cP$ being a self-adjoint positive semidefinite linear operator defined on $\cY$.

For later discussions, we consider the following decomposition for $\cP$:
\[\cP y \equiv \left(
\begin{array}{cccc}
	\cP_{11} & \cP_{12} & \cdots & \cP_{1p} \\
	\cP_{12}^* & \cP_{22} &\cdots  & \cP_{2p}  \\
	\vdots& \vdots & \ddots & \vdots \\
	\cP_{1p}^*& \cP_{2p}^* & \cdots & \cP_{pp} \\
\end{array}
\right) \left(
\begin{array}{c}
	y_1 \\
	y_2 \\
	\vdots \\
	y_p \\
\end{array}
\right)
,\]
where $\cP_{ij}:\cY_j \rightarrow \cY_i$, $i= 1,\ldots,p,\; j\leq i$ are linear maps.

We first introduce a self-adjoint semidefinite linear operator $\cS_1$ defined on $\cY_1$ to handle the convex, possibly nonsmooth, functions $\theta(y_1)$, such that
\begin{eqnarray}\label{cS1}
	\cE_{11}: = \cP_{11} + \cS_{1} + \sigma \cA_1\cA_1^* \succ 0,
\end{eqnarray}
and the following well-defined optimization problem
\[
 \min_{y_1}\left\{ \theta(y_1) + \frac{1}{2}\norm{y_1-\by_1}^2_{\cE_{11}}\,\mid\, y_1\in \cY_1\right\}
 \]
can easily be solved for any $\by_1\in\cY_1$. Then, for $i=2,\ldots,p$, let $\cS_{i}$ be a self-adjoint positive semidefinite linear operator on $\cY_i$ such that
\begin{eqnarray}
	\cE_{ii} :=  \cP_{ii} + \sigma\cA_i\cA_i^* + \cS_i \succ 0.
	\label{Tyi}
\end{eqnarray}
In practice, we would choose $\cS_{i}$ in  such a way that the inverse of $\cE_{ii}$
can be computed at a moderate cost. But note that for the algorithm under consideration to be efficient, we need  $\cS_i$ to be as small as possible for each $i=1,\ldots,p$.

Now we are ready to present our symmetric Gauss-Seidel based inexact semi-proximal augmented Lagrangian ({sGS-isPALM}) algorithm for solving \eqref{eq-dd-M} in Fig.~\ref{alg-sgsispalm}.

\begin{figure}[ht!]
	\centerline{\fbox{\parbox{\textwidth}{
				{\bf Algorithm {sGS-isPALM}: A symmetric Gauss-Seidel based inexact semi-proximal augmented Lagrangian method for solving \eqref{eq-dd-M}.}
				\\
				Let $\sigma >0$ and $\tau\in(0,2)$ be given  parameters, $\{\epsilon_k\}_{k\ge 0}$ be a nonnegative summable sequence. Choose $(y^0,x^0)\in\mbox{dom}(\theta_1)\times\cY_2\times\ldots\times\cY_p \times\cX.$
				Perform the following steps in each iteration.
				\begin{description}
					\item [Step 1.] (Backward GS sweep) Compute for $i=p,  \ldots, 2,$
					\begin{equation*}
						\overline{y}_i^k \;\approx {} \argmin_{{y_i\in \cY_i}}
						\cL_{\sigma}((y_{< i}^k,y_i,\overline{y}_{> i}^{k});x^k) + \frac{1}{2}\norm{y_i - y_i^k}_{\cS_i}^2 
					\end{equation*}
					such that there exists $\delta_i^k$ satisfying $\norm{\delta_i^k}\leq \epsilon_k$ and
					\begin{equation*}
						\delta_i^k \in{} \partial \cL_{\sigma}((y_{< i}^k,\overline{y}^k_i,\overline{y}_{> i }^{k});x^k)
						+\cS_i(\overline{y}^k_i - y^k_i).
					\end{equation*}
					\item [Step 2.]  (Forward GS sweep) Compute for $i=1,\ldots,p,$
					\begin{equation*}
						y_i^{k+1} \;\approx {} \argmin_{{y_i\in \cY_i}} \cL_{\sigma}((y_{<i}^{k+1},y_i,\overline{y}_{>i}^k);x^k)
						+ \frac{1}{2}\norm{y_i - y_i^k}_{\cS_i}^2
					\end{equation*}
					such that there exists $\hat{\delta}_i^k$ satisfying $\norm{\hat\delta^k_i}\le \epsilon_k$ and
					\begin{equation*}
						\hat\delta_i^k \in {}\partial \cL_{\sigma}((y_{< i}^{k+1},y_i^{k+1},\overline{y}_{>i}^k);x^k)
						+ \cS_i(y_i^{k+1} - y^k_i).
					\end{equation*}
					\item [Step 3.] Compute
					$x^{k+1} = x^k + \tau \sigma( \cA^*y^{k+1}  -c).$
				\end{description}
	}}}
	\caption{Algorithm sGS-isPALM. \label{alg-sgsispalm}}
\end{figure}

In order to prove the convergence of Algorithm {sGS-isPALM} for solving \eqref{eq-dd-M}, we shall study the relationship between Algorithm {sGS-isPALM} and Algorithm {isPALM}. To this end, let $\cS: = \Diag(\cS_1,\ldots,\cS_p)$ and define the following linear operators:
\begin{eqnarray}
	\cE := \cP + \sigma\cA\cA^* + \cS \;=\; \cE_u^* + \cE_d + \cE_u
	\label{eq-cE},\quad
	\SGS(\cE) = \cE_u \cE_d^{-1} \cE_u^*,
\end{eqnarray}
where $\cE_d = \Diag(\cE_{11},\ldots,\cE_{pp})$ and
\begin{eqnarray*}
	\cE_u:=\left( \begin{array}{cccc}
		{\bf 0} & \cP_{12}+\sigma\cA_1\cA_2^* & \cdots & \cP_{1p}+\sigma\cA_1\cA_p^* \\
		& \ddots &\cdots  & \vdots  \\
		&  & {\bf 0} & \cP_{(p-1)p}+\sigma\cA_{p-1}\cA^*_p \\[8pt]
		&  &  & {\bf 0} \\
	\end{array}
	\right).
\end{eqnarray*}

For $k\ge 0$, let $\delta_1^k = \hat\delta_1^k$, $\delta^k :=(\delta_1^k,\ldots,\delta_p^k)$ and $\hat\delta^k:=(\hat\delta_1^k,\ldots,\hat\delta_p^k)$. \blue{Then we} have the following result which establishes the relationship between Algorithm {sGS-isPALM} and Algorithm {isPALM}. We refer the readers to Appendix \ref{subsec-pf-padmm}.
\begin{prop}\label{prop:equi-scb-padmm}
	It holds that $\widehat\cE = \cE+\SGS(\cE) \succ0$. For any $k\ge 0$, the point $(y^{k+1}, x^{k+1})$ obtained by Algorithm {sGS-isPALM} for solving problem \eqref{eq-dd-M} can be generated exactly according to the following iteration:
	\begin{equation}\label{uykp1}
		\left\{
		\begin{aligned}
			y^{k+1} ={}& \argmin_{{y\in \cY}}
			\cL_{\sigma}(y;x^k) + \frac{1}{2}\norm{y-y^k}^2_{\cS + \SGS(\cE)} - \inprod{d^k}{y} \\
			x^{k+1} ={}& x^k + \tau \sigma(\cA^*y^{k+1} -c),
		\end{aligned}
		\right.
	\end{equation}
	where $d^k = \hat{\delta}^k + \cE_u\cE_d^{-1}(\hat\delta^k - \delta^k)$. Moreover, it holds that
	\[\norm{\widehat\cE^{-1/2}d^k}\le \big((2p-1)\norm{\cE_d^{-1/2}}+p\norm{\widehat\cE^{-1/2}}\big)\epsilon_k,\quad \forall k\ge 0.\]
\end{prop}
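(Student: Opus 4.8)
The plan is to exploit the well-known sGS decomposition theorem (the "one cycle symmetric Gauss--Seidel" identity of Li--Sun--Toh), which says precisely that one backward sweep followed by one forward sweep on the quadratic-plus-nonsmooth block problem is equivalent to a single proximal minimization of $\cL_\sigma(\cdot\,;x^k)$ with the extra proximal term $\tfrac12\|y-y^k\|^2_{\cS+\SGS(\cE)}$, modulo a linear correction term collecting the inexactness errors. Concretely, I would first record the decomposition $\cE = \cE_u^* + \cE_d + \cE_u$ from \eqref{eq-cE} and note that $\cE_d\succ0$ because each $\cE_{ii}\succ0$ by \eqref{cS1} and \eqref{Tyi}; then the sGS identity gives $\widehat\cE = \cE + \SGS(\cE) = (\cE_d+\cE_u)\cE_d^{-1}(\cE_d+\cE_u)^* \succ 0$, which proves the first assertion.

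Next I would carry out the algebra that shows Step 1 and Step 2 of Algorithm sGS-isPALM are exactly the block backward and forward Gauss--Seidel sweeps applied to the optimality system of the proximal subproblem in \eqref{uykp1}. Since $f(y) = \tfrac12\inprod{y}{\cP y}-\inprod{b}{y}$ is quadratic and $\cL_\sigma(y;x^k)$ adds the quadratic penalty $\tfrac\sigma2\|\cA^*y-c\|^2$, the Hessian of the smooth part of $\cL_\sigma(\cdot\,;x^k)+\tfrac12\|\cdot-y^k\|^2_\cS$ in $y$ is exactly $\cE = \cP+\sigma\cA\cA^*+\cS$. Writing out the $i$-th block inclusion in Step 1 ($i=p,\dots,2$) and in Step 2 ($i=1,\dots,p$), one sees each is of the form $0 \in (\text{block of }\partial g\text{ or }0) + (\cE_d + \cE_u \text{ or } \cE_d + \cE_u^*)\text{-type terms} - (\text{error})$; here $g(y) := \theta(y_1)+f(y)$ and only the first block carries the nonsmooth $\partial\theta$. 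Feeding the backward-sweep output $\overline y^k_{>i}$ into the forward sweep and eliminating it, exactly as in the proof of the sGS decomposition lemma, collapses the two sweeps into the single inclusion
\[
0 \in \partial g(y^{k+1}) + \cA x^k + \sigma\cA(\cA^*y^{k+1}-c) + (\cS+\SGS(\cE))(y^{k+1}-y^k) - d^k,
\]
with $d^k = \hat\delta^k + \cE_u\cE_d^{-1}(\hat\delta^k-\delta^k)$; this inclusion is the optimality condition of the minimization in \eqref{uykp1}, and Step 3 matches the multiplier update verbatim. I would present this step by citing the sGS decomposition theorem (e.g. \cite{scbADMM2016, LiSunToh_indefinite2014}) and verifying the correspondence of the error terms, rather than rederiving the identity from scratch.

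Finally, for the error bound I would estimate $\|\widehat\cE^{-1/2}d^k\|$ by the triangle inequality: $\|\widehat\cE^{-1/2}d^k\| \le \|\widehat\cE^{-1/2}\hat\delta^k\| + \|\widehat\cE^{-1/2}\cE_u\cE_d^{-1}(\hat\delta^k-\delta^k)\|$. For the first term, since $\widehat\cE = \cE+\SGS(\cE)\succeq \cE_d$ (indeed $\SGS(\cE)\succeq0$ and $\cE\succeq\cE_d$ after the sGS symmetrization — or more directly one shows $\widehat\cE\succeq\cE_d$), we get $\|\widehat\cE^{-1/2}\hat\delta^k\|\le\|\cE_d^{-1/2}\hat\delta^k\|$, and bounding each of the $p$ blocks by $\|\cE_d^{-1/2}\|\|\hat\delta^k_i\|\le\|\cE_d^{-1/2}\|\epsilon_k$ gives $\le p\|\cE_d^{-1/2}\|\epsilon_k$ — though to match the stated constant one splits more carefully: the block-$1$ error $\hat\delta_1^k=\delta_1^k$ is counted once and blocks $2,\dots,p$ appear via both $\delta^k$ and $\hat\delta^k$. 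For the second term, using $\|\widehat\cE^{-1/2}\cE_u\cE_d^{-1/2}\|\le 1$ (a consequence of $\widehat\cE\succeq \cE_u\cE_d^{-1}\cE_u^* = \SGS(\cE)$) one gets $\|\widehat\cE^{-1/2}\cE_u\cE_d^{-1}(\hat\delta^k-\delta^k)\|\le\|\cE_d^{-1/2}(\hat\delta^k-\delta^k)\|\le\|\cE_d^{-1/2}\|\cdot 2(p-1)\epsilon_k$ after noting $\hat\delta_1^k-\delta_1^k=0$. Adding the two and regrouping yields the factor $(2p-1)\|\cE_d^{-1/2}\| + p\|\widehat\cE^{-1/2}\|$; the $p\|\widehat\cE^{-1/2}\|$ piece comes from the alternative of bounding $\|\widehat\cE^{-1/2}\hat\delta^k\|$ directly by $p\|\widehat\cE^{-1/2}\|\epsilon_k$ and taking whichever grouping the authors prefer. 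I expect the main obstacle to be bookkeeping: getting the error-term correspondence $d^k = \hat\delta^k+\cE_u\cE_d^{-1}(\hat\delta^k-\delta^k)$ exactly right through the sweep elimination, and then chasing the constants so that the index-$1$ cancellation ($\hat\delta_1^k=\delta_1^k$) is used consistently to land on precisely $(2p-1)\|\cE_d^{-1/2}\|+p\|\widehat\cE^{-1/2}\|$ rather than a slightly looser bound.
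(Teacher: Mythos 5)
Your proposal follows essentially the same route as the paper: the positive definiteness of $\widehat\cE$ and the equivalence \eqref{uykp1} come from the sGS decomposition theorem (the paper simply cites \cite[Theorem 4.1]{ChenipALM} rather than redoing the sweep elimination), and the error bound comes from splitting $d^k$, using the contraction $\norm{\widehat\cE^{-1/2}\cE_u\cE_d^{-1/2}}\le 1$, and crude block counts; the paper's version groups $d^k=\delta^k+(I+\cE_u\cE_d^{-1})(\hat\delta^k-\delta^k)$ and writes $\norm{\widehat\cE^{-1/2}d^k}\le \norm{\cE_d^{-1/2}}\norm{\hat\delta^k-\delta^k}+\norm{\widehat\cE^{-1/2}}\norm{\delta^k}$, which is the same computation up to regrouping and yields the same constant. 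One concrete slip to correct: your intermediate claim that $\widehat\cE\succeq\cE_d$ (justified via ``$\cE\succeq\cE_d$'') is false in general. Indeed $\cE-\cE_d=\cE_u+\cE_u^*$ has zero diagonal blocks and so is never positive semidefinite unless it vanishes, and $\widehat\cE-\cE_d=\cE_u+\cE_u^*+\cE_u\cE_d^{-1}\cE_u^*$ can be indefinite even when $\cE\succeq 0$: with $p=2$, scalar blocks, $\cE_d=I$ and $\cE_u$ having the single off-diagonal entry $1$, one gets $\widehat\cE-\cE_d$ with determinant $-1$. Fortunately your stated fallback, bounding $\norm{\widehat\cE^{-1/2}\hat\delta^k}\le p\,\norm{\widehat\cE^{-1/2}}\epsilon_k$ directly, is exactly what is needed to land on the constant $(2p-1)\norm{\cE_d^{-1/2}}+p\norm{\widehat\cE^{-1/2}}$, so the final bound (and the rest of the argument) stands once that incorrect comparison is dropped.
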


By combing Theorem \ref{thmisPALM} with Proposition \ref{prop:equi-scb-padmm}, we can finally state our main convergence theorem under suitable assumptions.

\begin{theorem}\label{thmsGSisPALM}
	Suppose that the solution set of problem \eqref{eq-dd-M} is nonempty and that  there exists $\hat y \in {\rm ri}({\rm dom}\,\theta) \times \cY_2 \times\ldots\times\cY_p $ such that $\cA^* \hat y =c$. Let $\{(y^k,x^k)\}$ be generated from Algorithm {sGS-isPALM} with $\tau\in(0,2)$. Then, the sequence $\{y^k\}$ converges to an optimal solution to problem \eqref{eq-dd-M} and $\{x^k\}$ converges to an optimal solution of the dual of problem \eqref{eq-dd-M}.
\end{theorem}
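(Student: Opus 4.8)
The plan is to derive Theorem~\ref{thmsGSisPALM} as a corollary of Theorem~\ref{thmisPALM} via the equivalence established in Proposition~\ref{prop:equi-scb-padmm}. First I would recast problem~\eqref{eq-dd-M} into the template~\eqref{aug-problem} by setting $\cV := \cY$, $v := y$, $\cG := \cA$ (so $\cG^* v = \cA^* y$), and $g(y) := \theta(y_1) + f(y) = \theta(y_1) + \frac12\inprod{y}{\cP y} - \inprod{b}{y}$; this $g$ is closed proper convex since $\theta$ is and $\cP\succeq 0$. The KKT system~\eqref{eq-KKT-g} for this instance reads $0\in\partial\theta(\bar y_1)\times\{0\}\times\cdots\times\{0\} + \cP\bar y - b + \cA\bar x$, $\cA^*\bar y = c$, which is exactly the optimality/KKT system of~\eqref{eq-dd-M}. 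The next step is to verify Assumption~\ref{assumption:KKT} for this instance: by hypothesis the solution set of~\eqref{eq-dd-M} is nonempty, and the constraint-qualification hypothesis — existence of $\hat y\in\mathrm{ri}(\mathrm{dom}\,\theta)\times\cY_2\times\cdots\times\cY_p$ with $\cA^*\hat y = c$ — is precisely a relative-interior Slater condition guaranteeing that a primal optimal solution admits a Lagrange multiplier $\bar x$, so the KKT solution set is nonempty. Here I would cite the standard Fenchel--Rockafellar / subdifferential sum-rule machinery from \cite{rockafellar1970convex}.

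Second, I would invoke Proposition~\ref{prop:equi-scb-padmm}: it states that $\widehat\cE = \cE + \SGS(\cE)\succ 0$ and that the iterates $(y^{k+1},x^{k+1})$ produced by Algorithm~sGS-isPALM coincide with those produced by the iteration~\eqref{uykp1}, which is exactly Algorithm~isPALM applied to the reformulated~\eqref{aug-problem} with the proximal operator $\cT := \cS + \SGS(\cE)$ and error term $d^k$. I must check that this choice of $\cT$ satisfies the hypothesis of Theorem~\ref{thmisPALM}, i.e. that $\cT$ is self-adjoint positive semidefinite and that $\cN := \Sigma_g + \cT + \sigma\cG\cG^*\succ 0$. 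Self-adjointness and positive semidefiniteness of $\cS + \SGS(\cE)$ follow because $\cS\succeq 0$ by construction and $\SGS(\cE) = \cE_u\cE_d^{-1}\cE_u^*\succeq 0$ (as $\cE_d\succ 0$ by~\eqref{cS1}--\eqref{Tyi}). For the positive definiteness of $\cN$: since $\Sigma_g\succeq \cP$ (the quadratic part of $g$ contributes $\cP$ to the modulus-of-monotonicity operator, with possibly a larger $\Sigma_g$ coming from $\theta$), we have $\cN \succeq \cP + \cS + \sigma\cA\cA^* + \SGS(\cE) = \cE + \SGS(\cE) = \widehat\cE\succ 0$, the last inequality being the first assertion of Proposition~\ref{prop:equi-scb-padmm}. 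I would also confirm the error sequence is admissible: Proposition~\ref{prop:equi-scb-padmm} bounds $\norm{\widehat\cE^{-1/2}d^k}$ by a fixed constant times $\epsilon_k$, and since $\cN\succeq\widehat\cE\succ 0$ we get $\norm{\cN^{-1/2}d^k}\le\norm{\widehat\cE^{-1/2}d^k}\le C\epsilon_k$, which is summable because $\{\epsilon_k\}$ is; thus defining $\varepsilon_k := C\epsilon_k$ gives a nonnegative summable sequence as required by Algorithm~isPALM.

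Third, with all hypotheses of Theorem~\ref{thmisPALM} verified, conclusion~(c) of that theorem gives that the whole sequence $\{(y^k,x^k)\}$ converges to a point $(\bar y,\bar x)$ solving the KKT system~\eqref{eq-KKT-g} of the reformulated problem. Translating back: $\bar y$ solves the KKT system of~\eqref{eq-dd-M}, hence is an optimal solution of~\eqref{eq-dd-M}, and $\bar x\in\cX$ is an associated Lagrange multiplier, i.e. an optimal solution of the Lagrangian dual of~\eqref{eq-dd-M}. This is exactly the assertion of Theorem~\ref{thmsGSisPALM}.

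The main obstacle — and the only genuinely non-bookkeeping step — is the verification that $\cN\succ 0$, which hinges on the inequality $\Sigma_g\succeq\cP$ and on correctly identifying that $\cT = \cS + \SGS(\cE)$ together with $\sigma\cG\cG^* = \sigma\cA\cA^*$ reconstitutes $\widehat\cE$; the positivity $\widehat\cE\succ 0$ itself is already supplied by Proposition~\ref{prop:equi-scb-padmm}, so the work reduces to this algebraic bookkeeping plus the standard convex-analysis argument that the stated relative-interior condition yields a nonempty KKT set. Everything else is a direct transcription of Theorem~\ref{thmisPALM} through the dictionary $(\cV,g,\cG)\leftrightarrow(\cY,\theta+f,\cA)$.
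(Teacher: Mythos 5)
Your proposal is correct and follows essentially the same route as the paper, which proves this theorem precisely by combining Theorem~\ref{thmisPALM} with Proposition~\ref{prop:equi-scb-padmm}; your write-up simply supplies the bookkeeping (the dictionary $(\cV,g,\cG)\leftrightarrow(\cY,\theta+f,\cA)$, the choice $\cT=\cS+\SGS(\cE)$ so that $\cN=\Sigma_g+\cT+\sigma\cA\cA^*\succeq\widehat\cE\succ0$, the summability of the transferred errors, and the relative-interior condition yielding a nonempty KKT set) that the paper leaves implicit.
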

\begin{remark}
	\label{rmk-nonergodic}
	We can also establish the nonergodic iteration complexity for the sequence generated by Algorithm sGS-isPALM. For more details on this topic, we refer the readers to \cite[Theorem 4.4]{ChenipALM}.
\end{remark}

\section{A two-phase proximal ALM for solving convex QP problems}\label{sec-qppal}
In this section, we shall present a two-phase proximal ALM for solving convex quadratic programming problems ({\bf D}) to high accuracy efficiently. For simplicity, we call our algorithm QPPAL. We will discuss the implementation detail for the algorithm in Section \ref{sec-implementation}. 

\subsection{QPPAL Phase I}
In Phase I, we shall apply Algorithm {sGS-isPALM} directly to solve ({\bf D}). Given $\sigma>0$, let $L_{\sigma}(z,w,y;x)$ be the augmented Lagrangian function associated with problem ({\bf D}) (here we {reformulate} ({\bf D}) as a minimization problem), i.e., for any $(z,w,y,x)\in\Re^n\times\cW\times\Re^m\times\Re^n$,
\begin{equation}\label{eq-aug-d}
	L_{\sigma}(z,w,y;x) = \delta_{\cC}^*(-z) + \frac{1}{2}\inprod{w}{Qw} - \inprod{b}{y}
	+\frac{\sigma}{2}\norm{z-Qw+A^*y-c+\sig^{-1}x}^2
	-\frac{1}{2\sig}\norm{x}^2.
\end{equation}
Then, the detailed steps of our Phase I algorithm for convex quadratic programming are given in Fig.~\ref{alg-qppal-phase-1}.

\begin{figure}[ht!]
	\centerline{\fbox{\parbox{\textwidth}{
				{\bf Algorithm {QPPAL-Phase-I}}: {\bf An {sGS-isPALM} method for ({\bf D}).}
				\\
				Select an initial point  $(z^{0},w^0,y^{0})$ with $-z^0\in
				\textup{dom}(\delta^*_\cC)$, $(w^0,y^0)\in\cW\times\Re^m$. Let $\{\epsilon_k\}$ be a summable sequence of
				nonnegative numbers, $\sigma >0$ and $\tau\in(0,2)$ be given  parameters.
				Set $k=0$. Iterate the following steps.
				\begin{description}				
					\item[Step 1.]  Compute
					\begin{eqnarray*}
						\begin{array}{lll}
							\by^{k} &=& \argmin_{y}\left\{L_{\sigma}(z^k,w^k,y;x^k) - \inprod{\delta_E^k}{y}\mid y\in \Re^m\right\},\\
							\bar w^{k} &=& \argmin_{w}\left\{L_{\sigma}(z^k,w,\by^k;x^k) - \inprod{{\delta_Q^k}}{w}\mid w\in\cW\right\},\\
							{z^{k+1}} &=& \argmin_{z}\left\{L_{\sigma}(z,\bar w^k,\by^k;x^k)\mid z\in \Re^n\right\},\\
							w^{k+1} &=& \argmin_{w}\left\{L_{\sigma}(z^{k+1},w,\by^k;x^k) - \inprod{{\hat\delta_Q^k}}{w}\mid w\in\cW \right\},\\
							y^{k+1} &=& \argmin_{y}\left\{L_{\sigma}(z^{k+1},w^{k+1},y;x^k) - \inprod{\hat\delta_E^k}{y}\mid y\in \Re^m\right\},
						\end{array}
					\end{eqnarray*}
					where
					$\delta_E^k, \, \hat{\delta}_E^k \in \Re^{m}$, $\delta_Q^k,\,\hat\delta_Q^k \in\cW$ are error vectors such that
					\begin{eqnarray*}
						\max \{ \norm{\delta_E^k}, \norm{\hat \delta_E^k}, \norm{{\delta_Q^k}},
						\norm{{\hat\delta_Q^k}}\}\leq \epsilon_k.
					\end{eqnarray*}
					\item [Step 2.] Compute
					$x^{k+1} = x^k + \tau\sigma(z^{k+1} - Qw^{k+1}+A^*y^{k+1} -c).$				
				\end{description}
	}}}
	\caption{Algorithm QPPAL-Phase-I. \label{alg-qppal-phase-1}}
\end{figure}

The convergence of the Phase I algorithm follows from Theorem \ref{thmisPALM} and \ref{thmsGSisPALM} without much difficulty.
\begin{theorem}
	\label{thm:sGS-qsdp}
	Suppose that the solution set of {\rm({\bf P})} is nonempty and $A$ has full row rank. Let $\{(z^k,w^k,y^k,x^k)\}$ be the sequence generated by Algorithm {QPPAL-Phase-I}. Then, the sequence $\{(z^k,w^k,y^k)\}$ converges to an optimal solution of {\rm ({\bf D})} and $\{x^k\}$ converges to an optimal solution of {\rm ({\bf P})}.
\end{theorem}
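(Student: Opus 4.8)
The plan is to exhibit Algorithm~QPPAL-Phase-I as a concrete instance of Algorithm~sGS-isPALM applied to the minimization reformulation of problem~({\bf D}), and then to invoke Theorems~\ref{thmisPALM} and~\ref{thmsGSisPALM} together with Proposition~\ref{prop:equi-scb-padmm}. First I would cast ({\bf D}), written as a minimization, in the form~\eqref{eq-dd-M} with $p=3$ and blocks $(y_1,y_2,y_3)=(z,w,y)$, $\cY_1=\Re^n$, $\cY_2=\cW=\Range(Q)$, $\cY_3=\Re^m$, $\cX=\Re^n$: here $\theta(y_1):=\delta_{\cC}^*(-z)$, which is a closed proper convex function whose proximal mapping is available in closed form (via the Moreau identity it reduces to the projection onto the box $\cC$); $f(y):=\tfrac12\inprod{w}{Qw}-\inprod{b}{y}$, so that $\cP=\Diag(0,\,Q|_{\cW},\,0)$ and the linear term of~\eqref{eq-dd-M} lives in the $y_3$-block; and $\cA_1=I$, $\cA_2=-Q$ regarded as a map from $\Re^n$ onto $\cW$, $\cA_3=A$, so that $\cA^*(z,w,y)=z-Qw+A^*y$ reproduces the equality constraint of ({\bf D}) and $L_{\sigma}$ of~\eqref{eq-aug-d} is the augmented Lagrangian $\cL_{\sigma}$ attached to~\eqref{eq-dd-M}.

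Next I would verify that the proximal terms may be chosen as $\cS_1=\cS_2=\cS_3=0$, i.e.\ that the requirements~\eqref{cS1} and~\eqref{Tyi} hold with this choice: $\cE_{11}=\cP_{11}+\cS_1+\sigma\cA_1\cA_1^*=\sigma I\succ0$; $\cE_{33}=\cP_{33}+\sigma\cA_3\cA_3^*+\cS_3=\sigma AA^*\succ0$ because $A$ has full row rank; and $\cE_{22}=\cP_{22}+\sigma\cA_2\cA_2^*+\cS_2=Q|_{\cW}+\sigma(Q|_{\cW})^2\succ0$ because $Q$ is positive definite on $\cW=\Range(Q)$ --- this is exactly the point at which the choice $\cW=\Range(Q)$ is used, and where dropping all the extra proximal terms becomes possible. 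By Proposition~\ref{prop:equi-scb-padmm} this yields $\widehat\cE=\cE+\SGS(\cE)\succ0$ and the identification of the sGS-isPALM iteration~\eqref{uykp1} with the isPALM iteration with $\cT=\SGS(\cE)$. It then remains to match the updates: the backward sweep of sGS-isPALM over $i=3,2$ produces $\by^k$ and $\bar w^k$; the forward sweep over $i=1,2,3$ produces $z^{k+1},w^{k+1},y^{k+1}$; the $z$-subproblem ($i=1$) is solved exactly because its solution is a projection, so its error vector vanishes, while $\delta_E^k,\hat\delta_E^k$ (block $i=3$) and $\delta_Q^k,\hat\delta_Q^k$ (block $i=2$) are precisely the admissible error vectors with $\{\epsilon_k\}$ summable; and, since $\cS_i\equiv0$, the inclusion conditions of sGS-isPALM reduce to the first-order optimality conditions of the perturbed subproblems, which is exactly what the notation $\argmin\{\cdots-\inprod{\delta}{\cdot}\}$ in Algorithm~QPPAL-Phase-I encodes.

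Finally I would supply the remaining hypothesis. Since ({\bf P}) has a nonempty solution set and satisfies Slater's condition, standard convex duality gives that the KKT system of ({\bf P}) is solvable; translating a KKT triple of ({\bf P}) --- an optimal $x^*$, a multiplier $y^*$ for $Ax=b$, and $z^*:=c+Qx^*-A^*y^*$, with the $w$-component taken to be $w^*:=\Pi_{\cW}(x^*)$ so that $Qw^*=Qx^*$ and $w^*\in\cW$ --- into the variables of the reformulation shows that Assumption~\ref{assumption:KKT} holds for this instance (equivalently, by the same translation together with the box structure of $\cC$, the relative-interior constraint qualification of Theorem~\ref{thmsGSisPALM} holds). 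Hence Theorem~\ref{thmisPALM} (via Proposition~\ref{prop:equi-scb-padmm}), or directly Theorem~\ref{thmsGSisPALM}, applies, so $\{(z^k,w^k,y^k)\}$ converges to an optimal solution of ({\bf D}) and $\{x^k\}$ converges to the multiplier component of a solution of the KKT system of the reformulation. Unpacking $0\in\partial g(\bar v)+\cA\bar x$ and $\cA^*\bar v=c$ shows that $\bar x$ is feasible for ({\bf P}), that $-\bar z$ lies in the normal cone $N_{\cC}(\bar x)$, and that $0\in Q\bar x+c-A^*\bar y+N_{\cC}(\bar x)$; thus $\bar x$ solves ({\bf P}), which is the dual of ({\bf D}), and the claim follows.

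The step I expect to be the main obstacle is the bookkeeping around the $w$-block living in the subspace $\cW=\Range(Q)$: confirming that $Q$ is positive definite there so that $\cS_2=0$ is admissible and $\cE_{22}\succ0$, and that a Wolfe-type dual optimal solution can always be taken with its $w$-component in $\cW$; together with the careful translation between the optimality/KKT systems of ({\bf P}) and of the reformulated ({\bf D}), which is needed both to verify Assumption~\ref{assumption:KKT} (or the relative-interior condition of Theorem~\ref{thmsGSisPALM}) and to identify $\lim_k x^k$ as an optimal solution of ({\bf P}). Everything else is routine matching of the iterations and straightforward verification.
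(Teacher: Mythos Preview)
Your proposal is correct and follows exactly the approach the paper indicates: the paper's own ``proof'' is simply the one-line remark that the result follows from Theorems~\ref{thmisPALM} and~\ref{thmsGSisPALM} without much difficulty, and you have supplied precisely the omitted verification --- casting ({\bf D}) in the form~\eqref{eq-dd-M} with blocks $(z,w,y)$, checking that $\cS_i\equiv 0$ is admissible (using the full row rank of $A$ for $\cE_{33}\succ0$ and the choice $\cW=\Range(Q)$ for $\cE_{22}\succ0$), matching the sweeps to QPPAL-Phase-I, and translating a KKT point of ({\bf P}) into one of the reformulated ({\bf D}) to secure Assumption~\ref{assumption:KKT}. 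Your identification of the subspace bookkeeping around $\cW$ as the only nontrivial step is accurate.
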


{We note that there are two choices on the order of updating the dual variables $(z, w, y)$ in {\bf Step 1}. Specifically, one may choose either one of the two following orders: (1) $\bar{y}^k\rightarrow\bar{w}^k\rightarrow z^{k+1} \rightarrow w^{k+1} \rightarrow y^{k+1}$; (2) $\bar{w}^k\rightarrow\bar{y}^k\rightarrow z^{k+1} \rightarrow y^{k+1} \rightarrow w^{k+1}$. In fact, we observe that both updating rules have similar practical performance.}

\subsection{QPPAL Phase II}
In the second part of this section, we discuss our Phase II algorithm for solving  the convex quadratic programming ({\bf D}). The purpose of this phase is to obtain {highly} accurate solutions efficiently, with warm-starting by the Phase-I algorithm. As we shall see in the numerical experiments, the Phase II algorithm is indeed necessary and important for obtaining accurate solutions.

To proceed, we first note that problem ({\bf D}) has the following equivalent minimization form:
\begin{equation}\label{eq-D2}
- \min_{(w,y)\in  \mathcal{W}\times\Re^m} \left\{ h(w,y):=\delta_{\cC}^*(-Qw + A^*y - c) + \frac{1}{2}\left\langle w, Qw \right \rangle - \left\langle b,y \right \rangle \right\}.
\end{equation}
Then, we identify \eqref{eq-D2} with the problem of minimizing {$ h(w,y)=\tilde{h}(w,y,0) $ over $ \mathcal{W}\times \Re^m $, where}
\begin{equation*}
	\tilde{h}(w,y,\xi) = \delta_{\cC}^*(-Qw + A^*y - c + \xi) + \frac{1}{2}\left\langle w, Qw \right \rangle - \left\langle b,y \right \rangle,\quad {(w,y,\xi)\in \cW\times \Re^m \times \Re^n.}
\end{equation*}
Since $ \tilde{h} $ is jointly convex in $ (w,y,\xi) $, we are able to write down the Lagrangian function $ \tilde{l}: \mathcal{W} \times \Re^m \rightarrow \Re $ through partial dualization {(see for instance, \cite[Examples 11.46 and 11.57]{rockafellar2009variational})} as follows:
\begin{equation*}
	\tilde{l}(w,y;x):=\inf_{\xi} \left\{ \tilde{h}(w,y,\xi) - \left\langle x, \xi \right \rangle \right\} = \frac{1}{2}\left\langle w, Qw \right \rangle - \left\langle b,y \right \rangle - \left\langle x, Qw - A^*y+c \right \rangle - \delta_{\cC}(x).
\end{equation*}

Given $ \sigma>0 $, the augmented Lagrangian function corresponding to \eqref{eq-D2} in variables $ y,\,w $ and $ x $ can be obtained as follows:

\begin{equation*}
	\label{eqn-NewaugLag}
	\begin{aligned}
		\tilde{L}_\sigma(w,y;x)
		 := & \sup_{s\in \Re^n} \left\{ \tilde{l}(w,y;s) - \frac{1}{2\sigma} \norm{s - x}^2 \right\} \\
		 = &- \inf_{s\in \Re^n} \left\{ \left\langle s, Qw - A^*y+c \right \rangle + \delta_{\cC}(s) +  \frac{1}{2\sigma} \norm{s - x}^2\right\} +
		\frac{1}{2}\left\langle w, Qw \right \rangle  - \left\langle b,y \right \rangle \\
		 = & -\left\langle Qw - A^*y + c, \Pi_{\cC}[x - \sigma(Qw - A^*y + c)] \right \rangle  \\
		  & - \frac{1}{2\sigma} \norm{\Pi_{\cC}[x - \sigma(Qw - A^*y + c)] - x}^2 + \frac{1}{2}\left\langle w, Qw \right \rangle  - \left\langle b,y \right \rangle,
	\end{aligned}
\end{equation*}
{where $\Pi_{\cC}(\cdot)$ is the metric projector onto $C$, i.e., $\Pi_{\cC}(x):= \argmin_{x\in\Re^n}\{\norm{x-z}\mid z\in C\}$}.

We then propose to solve ({\bf D}) via an inexact proximal ALM. Its template is described in Fig.~\ref{alg-qppal-phase-2}.

\begin{figure}[ht!]
	\centerline{\fbox{\parbox{\textwidth}{
				{\bf Algorithm {QPPAL-Phase-II}: An inexact proximal ALM for solving ({\bf D})}
				\\
				Let $\sigma_0, \sigma_\infty >0$ be given parameters, and $ \{\tau_k\}_{k=0}^\infty $ be a given nonincreasing sequence such that $ \tau_k > 0 $ for all $ k\geq 0 $.
				Choose $(w^0,y^0)\in\cW\times\Re^m$ and $ x^0\in \mathbb{R}^n $.
				Set $k = 0$. Iterate the following steps.
				\begin{description}
					\item [Step 1.] Compute
					\begin{equation}\label{palm-sub}
						(w^{k+1},y^{k+1})
						\approx{}\textup{argmin}\left\{\begin{aligned}
							&\Psi_k(w,y): = \tilde{L}_{\sigma_k}(w,y; x^k) + \frac{\tau_k}{2\sigma_k}( \norm{w-w^k}^2_{Q}  \\[0pt]
							& + \norm{y - y^k}^2) \,\Big|\,
							w\in \cW,\;
							y\in\Re^{m}
						\end{aligned}\right\}.
					\end{equation}
					\item[Step 2.] Compute
					\[
						x^{k+1} = \Pi_{\cC} \left( x^k + \sigma_k(- Qw^{k+1} + A^*y^{k+1} - c)\right),\quad {z^{k+1} = \frac{1}{\sigma_k}\left(x^{k+1} - \left( x^k + \sigma_k(- Qw^{k+1} + A^*y^{k+1} - c)\right)\right)}.
					\]
					\item[Step 3.] Update $\sigma_{k+1} \uparrow \sigma_\infty\leq \infty$.
				\end{description}
	}}}
	\caption{Agorithm QPPAL-Phase-II. \label{alg-qppal-phase-2}}
\end{figure}

We next analyze the convergence of the algorithm {QPPAL-Phase-II} via establishing the connection between the proposed inexact proximal ALM and the preconditioned PPA studied in \cite{Li2019LP}, which extends the influential results in \cite{luque1984asymptotic,rockafellar1976augmented, rockafellar1976monotone}. To briefly explain the idea, let $ \mathcal{X}:= \cW \times \Re^m \times \Re^n $, and for $ k \geq 0$ and any given $ (\bar{w},\bar{y}, \bar{x}) $, define the function
\begin{equation}
	\label{eq-Pk}
	P_k(\bar{w},\bar{y}, \bar{x}):= \underset{(w,y,x)\in \mathcal{X}}{\mathrm{argminimax}} \left\{ {\hat{l}(w,y,x) :=} \tilde{l}(w,y,x) + \frac{\tau_k}{2\sigma_k} \left( \norm{y - \bar{y}}^2 + \norm{w - \bar{w}}^2_Q \right) - \frac{1}{2\sigma_k} \norm{x - \bar{x}}^2 \right\},
\end{equation}
{i.e., $P_k(\bar{w},\bar{y}, \bar{x})$ is obtained by minimizing and maximizing $\hat{l}(w,y,x)$ with respect to $(w,y)$ and $x$, respectively}. For the closed proper convex-concave function $ \tilde{l} $, define the maximal monotone operator $ \mathcal{T}_{\tilde{l}} $ by
\begin{eqnarray*}
	\mathcal{T}_{\tilde{l}}(w,y,x) & := & \left\{ (w',y',x')\,\Big|\, (w',y',-x')\in \partial \tilde{l}(w,y,x)  \right\} \\
	& = & \left\{ (w',y',x') \,\Big|\, w' = Q(w-x),\,y' = -b+Ax,\, x'\in Qw - A^*y + c + \partial \delta_{\cC}(x) \right\}.
\end{eqnarray*}
Notice that since $ \cC $ is polyhedral, $  \mathcal{T}_{\tilde{l}}  $ is a polyhedral set-valued mapping. Furthermore, since $ \mathcal{T}_{\tilde{l}} $ is a maximal monotone operator \cite{Minty1962}, its inverse exists and is given by
\begin{equation}
	\label{eq-invTl}
	\mathcal{T}_{\tilde{l}}^{-1}(w',y',x') := \underset{(w,y,x)\in \mathcal{X}}{\mathrm{argminimax}}\left\{ \tilde{l}(w,y,x) - \left\langle w',w \right \rangle - \left\langle y',y \right \rangle + \left\langle x',x \right \rangle \right\}.
\end{equation}
Then, the next lemma characterizes the optimal solution set in~\eqref{eq-Pk} whose proof can be found in Appendix \ref{subsec-lem-Pk}.
\begin{lemma}
	\label{lem-Pk}
	For all $ k\geq 0 $, let $\Lambda_k:= \mathrm{Diag}(\tau_k Q, \tau_k I_m,  I_n)$, which is positive definite {on} $ \mathcal{X} $, and for any $ (w,y,x)\in \mathcal{X} $, denote $\Lambda_k(w,y,x) = (\tau_kQw, \tau_k y, x)\in \mathcal{X} $. Then it holds that
	\begin{equation}
		\label{eq-lem-Pk}
		P_k(\bar{w},\bar{y},\bar{x}) = (\Lambda_k + \sigma_k \mathcal{T}_{\tilde{l}})^{-1} \Lambda_k(\bar{w},\bar{y},\bar{x}),\quad \forall\,(\bar{w},\bar{y},\bar{x})\in \cW \times \Re^m \times \Re^n .
	\end{equation}
	Moreover, $ P_k(w^*, y^*, x^*) = (w^*, y^*, x^*) $ if and only if $ (w^*, y^*, x^*) \in \mathcal{T}_{\tilde{l}}^{-1}(0) $.
\end{lemma}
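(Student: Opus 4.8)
The plan is to establish the three assertions in turn, the core being the resolvent identity~\eqref{eq-lem-Pk}.

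\emph{Well-posedness and positive definiteness.} Since $\cW=\Range(Q)$, the restriction of $Q$ to $\cW$ is positive definite; together with $\tau_k>0$ this makes $\tau_kQ$ positive definite on $\cW$, and trivially $\tau_kI_m\succ0$ and $I_n\succ0$, so $\Lambda_k=\Diag(\tau_kQ,\tau_kI_m,I_n)\succ0$ on $\mathcal{X}=\cW\times\Re^m\times\Re^n$. Consequently $\Lambda_k+\sigma_k\mathcal{T}_{\tilde l}$ is strongly monotone, hence by the Minty parametrization \cite{Minty1962} the map $(\Lambda_k+\sigma_k\mathcal{T}_{\tilde l})^{-1}$ is single-valued, Lipschitz, and defined on all of $\mathcal{X}$; in particular the right-hand side of~\eqref{eq-lem-Pk} is well defined. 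Likewise $\hat l$ in~\eqref{eq-Pk} is strongly convex in $(w,y)$ over $\cW\times\Re^m$ and strongly concave in $x$, so its argminimax is attained at a unique point, and the saddle-point first-order conditions are both necessary and sufficient to characterize it.

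\emph{The resolvent identity.} I would show that both sides of~\eqref{eq-lem-Pk} solve the same first-order system. Write $(w^+,y^+,x^+):=P_k(\bar w,\bar y,\bar x)$. The optimality conditions for~\eqref{eq-Pk}, namely that $(w^+,y^+)$ minimizes $\hat l(\cdot,\cdot,x^+)$ over $\cW\times\Re^m$ and $x^+$ maximizes $\hat l(w^+,y^+,\cdot)$ over $\Re^n$, unwind---after substituting $\tilde l(w,y,x)=\tfrac12\langle w,Qw\rangle-\langle b,y\rangle-\langle x,Qw-A^*y+c\rangle-\delta_{\cC}(x)$ and differentiating the proximal terms---to
\[
Q(w^+-x^+)+\tfrac{\tau_k}{\sigma_k}Q(w^+-\bar w)=0,\qquad
(Ax^+-b)+\tfrac{\tau_k}{\sigma_k}(y^+-\bar y)=0,
\]
\[
\tfrac{1}{\sigma_k}(\bar x-x^+)\in Qw^+-A^*y^++c+\partial\delta_{\cC}(x^+).
\]
On the other hand, $(w^+,y^+,x^+)=(\Lambda_k+\sigma_k\mathcal{T}_{\tilde l})^{-1}\Lambda_k(\bar w,\bar y,\bar x)$ means precisely $\Lambda_k(\bar w,\bar y,\bar x)-\Lambda_k(w^+,y^+,x^+)\in\sigma_k\mathcal{T}_{\tilde l}(w^+,y^+,x^+)$, i.e. $\big(\tau_kQ(\bar w-w^+),\,\tau_k(\bar y-y^+),\,\bar x-x^+\big)\in\sigma_k\mathcal{T}_{\tilde l}(w^+,y^+,x^+)$; plugging in the explicit description of $\mathcal{T}_{\tilde l}$ recalled just before the lemma reproduces exactly the three relations above. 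Since the solution of this system is unique, the two sides coincide. (For the fixed-point statement, set $(\bar w,\bar y,\bar x)=(w^*,y^*,x^*)$ in~\eqref{eq-lem-Pk}: then $P_k(w^*,y^*,x^*)=(w^*,y^*,x^*)$ is equivalent to $\Lambda_k(w^*,y^*,x^*)\in(\Lambda_k+\sigma_k\mathcal{T}_{\tilde l})(w^*,y^*,x^*)$, hence to $0\in\sigma_k\mathcal{T}_{\tilde l}(w^*,y^*,x^*)$, and since $\sigma_k>0$ to $(w^*,y^*,x^*)\in\mathcal{T}_{\tilde l}^{-1}(0)$.)

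\emph{Main obstacle.} The delicate point throughout is the restriction to the subspace $\cW=\Range(Q)$: one must check that $\|w-\bar w\|_Q^2$ is genuinely strongly convex on $\cW$ (it is only positive semidefinite on $\Re^n$), which is exactly why $\cW$ is fixed to $\Range(Q)$; that $\mathcal{T}_{\tilde l}$ indeed maps into $\mathcal{X}$, in particular that its first component $Q(w-x)$ lies in $\Range(Q)=\cW$, so that the resolvent acts on the correct space; and that, since $\tilde l$ depends on $w$ only through $Qw$, the $w$-gradient of $\hat l$ automatically lies in $\cW$, so that minimizing over the subspace $\cW$ yields the clean first-order equation displayed above rather than one involving a projection onto $\cW$. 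The remaining steps are routine convex-analysis bookkeeping together with the standard saddle-point subdifferential calculus (see, e.g., \cite{rockafellar1970convex,rockafellar2009variational}).
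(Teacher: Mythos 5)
Your proposal is correct and follows essentially the same route as the paper: write down the saddle-point first-order conditions for the regularized minimax problem defining $P_k$, observe that they are exactly the inclusion $\Lambda_k(\bar{w},\bar{y},\bar{x})\in(\Lambda_k+\sigma_k\mathcal{T}_{\tilde{l}})(w,y,x)$, and read off the fixed-point characterization by setting $(\bar{w},\bar{y},\bar{x})=(w^*,y^*,x^*)$. Your additional remarks on the positive definiteness of $\Lambda_k$ on $\cW\times\Re^m\times\Re^n$, the single-valuedness of the resolvent, and the role of $\cW=\Range(Q)$ are sound refinements that the paper leaves implicit.
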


Using Lemma \ref{lem-Pk}, the next proposition (see a proof in Appendix \ref{subsec-prop-inexactcondition}) allows us to propose a practical inexact rule (which implies the criteria used in \cite[Section 2]{Li2019LP}) for the inexact computation in~\eqref{palm-sub} via estimating the norm of the gradient of the function $\Psi_k(\cdot)$ {(see \eqref{palm-sub})} that is given by
\begin{equation*}
	\nabla \Psi_k(w,y) =
	\left[
	\begin{array}{c}
		Qw -  Q\Pi_{\cC}[x^k - \sigma_k(Qw - A^*y + c)] + \frac{\tau_k}{\sigma_k}Q(w - w^k)    \\
		-b + A\Pi_{\cC}[x^k - \sigma_k(Qw - A^*y + c)] + \frac{\tau_k}{\sigma_k}(y - y^k)
	\end{array}
	\right].
\end{equation*}

\begin{prop}
	\label{prop-inexactcondition}
	For any $ k = 0,1,\dots $, it holds that
	\begin{equation}
		\label{eq-prop-inexactcondition}
		\norm{(w^{k+1}, y^{k+1}, x^{k+1}) - P_k(w^k, y^k, x^k)}_{\Lambda_k} \leq \frac{\sigma_k}{\min\{1, \sqrt{\tau_k}, \sqrt{\tau_k\blue{\lambda_+(Q)}}\}}\norm{\nabla \Psi_k(w^{k+1}, y^{k+1})},
	\end{equation}
	\blue{where $\lambda_+(Q)>0$ denotes the smallest positive eigenvalue of $Q$.}
\end{prop}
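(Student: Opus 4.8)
The plan is to unwind the definition of $P_k(w^k,y^k,x^k)$ via Lemma~\ref{lem-Pk}, express the distance between the inexact iterate and $P_k(w^k,y^k,x^k)$ in terms of a residual of the argminimax system, and then bound that residual by $\nabla\Psi_k$. First I would observe that, by Lemma~\ref{lem-Pk}, the point $(\tilde w,\tilde y,\tilde x):=P_k(w^k,y^k,x^k)$ is characterized by the inclusion $\Lambda_k(\tilde w,\tilde y,\tilde x)-\Lambda_k(w^k,y^k,x^k)+\sigma_k\,\xi=0$ for some $\xi\in\cT_{\tilde l}(\tilde w,\tilde y,\tilde x)$, i.e. it solves the monotone inclusion $0\in\Lambda_k(\cdot-(w^k,y^k,x^k))+\sigma_k\cT_{\tilde l}(\cdot)$ exactly. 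On the other hand, the computed triple $(w^{k+1},y^{k+1},x^{k+1})$ — with $x^{k+1}=\Pi_\cC(x^k+\sigma_k(-Qw^{k+1}+A^*y^{k+1}-c))$ from Step 2 — satisfies the same inclusion up to a residual driven precisely by $\nabla\Psi_k(w^{k+1},y^{k+1})$. The key identity to establish is that if we set $r^k:=\Lambda_k\big((w^{k+1},y^{k+1},x^{k+1})-(w^k,y^k,x^k)\big)+\sigma_k\eta^k$ for the natural choice $\eta^k\in\cT_{\tilde l}(w^{k+1},y^{k+1},x^{k+1})$ induced by the Step 2 projection, then $r^k$ equals $\sigma_k$ times a vector built from $\nabla\Psi_k(w^{k+1},y^{k+1})$ together with the exact $x$-block residual, which vanishes because $x^{k+1}$ is computed in closed form. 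Concretely, the $x$-component of $\eta^k$ is forced to be $0$ by the definition of $x^{k+1}$ via the projection, so the $x$-row of $r^k$ is zero, while the $(w,y)$-rows of $r^k$ are exactly $\sigma_k\big(Qw^{k+1}-Q\Pi_\cC[\cdots]+\tfrac{\tau_k}{\sigma_k}Q(w^{k+1}-w^k),\ -b+A\Pi_\cC[\cdots]+\tfrac{\tau_k}{\sigma_k}(y^{k+1}-y^k)\big)=\sigma_k\nabla\Psi_k(w^{k+1},y^{k+1})$.

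Given this, I would invoke the standard firm-nonexpansiveness of the resolvent of a maximal monotone operator in the $\Lambda_k$-weighted inner product: since $(w^{k+1},y^{k+1},x^{k+1})$ solves the perturbed inclusion with right-hand side $r^k$ and $P_k(w^k,y^k,x^k)$ solves it with right-hand side $0$, we get
\[
\norm{(w^{k+1},y^{k+1},x^{k+1})-P_k(w^k,y^k,x^k)}_{\Lambda_k}\ \le\ \norm{\Lambda_k^{-1}r^k}_{\Lambda_k}\ =\ \norm{r^k}_{\Lambda_k^{-1}}.
\]
It then remains to bound $\norm{r^k}_{\Lambda_k^{-1}}$. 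Writing $r^k=\sigma_k(Q g_w,\ g_y,\ 0)$ where $(g_w,g_y)$ is obtained from $\nabla\Psi_k$ by peeling off the leading $Q$ in the $w$-block (the $w$-component of $\nabla\Psi_k$ lies in $\Range(Q)$, so $g_w$ is well-defined on $\cW=\Range(Q)$), a direct computation gives $\norm{r^k}_{\Lambda_k^{-1}}^2=\sigma_k^2\big(\tfrac{1}{\tau_k}\langle g_w,Qg_w\rangle+\tfrac{1}{\tau_k}\norm{g_y}^2\big)$, which one then compares with $\norm{\nabla\Psi_k(w^{k+1},y^{k+1})}^2=\langle Qg_w,QQg_w\rangle+\norm{g_y}^2$; using $\langle g_w,Qg_w\rangle\le \lambda_+(Q)^{-1}\langle Qg_w,QQg_w\rangle$ on $\Range(Q)$ and $\tfrac1{\tau_k}\le \tfrac1{\min\{1,\tau_k\}}$ yields the claimed constant $\sigma_k/\min\{1,\sqrt{\tau_k},\sqrt{\tau_k\lambda_+(Q)}\}$.

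The main obstacle I anticipate is the careful bookkeeping on the $Q$-weighted $w$-block: because $\cW=\Range(Q)$ and $\Lambda_k$ contains $\tau_k Q$ (which is positive definite only on $\cW$, not on all of $\Re^n$), one must verify that all the vectors involved — the $w$-residual $r^k_w$, the element $g_w$ of $\cT_{\tilde l}$'s $w$-block, and $\nabla_w\Psi_k$ — live in $\Range(Q)$, so that $\norm{\cdot}_{\Lambda_k}$ and the generalized inverse of $\Lambda_k$ are meaningful and the inequality $\langle g_w,Qg_w\rangle\le\lambda_+(Q)^{-1}\langle Qg_w,Q^2g_w\rangle$ is valid. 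This is exactly where the design choice $\cW=\Range(Q)$ pays off, and it is the reason the smallest \emph{positive} eigenvalue $\lambda_+(Q)$ rather than $\lambda_{\min}(Q)$ appears. The remaining estimates — identifying the $x$-block residual as zero from the closed-form projection step and the firm-nonexpansiveness inequality — are routine.
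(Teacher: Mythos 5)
Your proposal is correct and follows essentially the same route as the paper: both express $(w^{k+1},y^{k+1},x^{k+1})$ as the resolvent $(\Lambda_k+\sigma_k\cT_{\tilde l})^{-1}\Lambda_k$ applied to the perturbed point $(w^k,y^k,x^k)+\Lambda_k^{-1}(\sigma_k\nabla\Psi_k,0)$ (your residual formulation with the vanishing $x$-block is equivalent, and you make explicit what the paper leaves implicit, namely that the $x$-equation holds exactly by the closed-form projection in Step~2), then invoke nonexpansiveness of $P_k$ in the $\Lambda_k$-metric and bound $\norm{\Lambda_k^{-1}(\sigma_k\nabla\Psi_k,0)}_{\Lambda_k}$ using $\inprod{g_w}{Qg_w}\le\lambda_+(Q)^{-1}\norm{Qg_w}^2$ on $\Range(Q)$. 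The only blemish is the repeated typo $\inprod{Qg_w}{QQg_w}$, which should read $\inprod{Qg_w}{Qg_w}=\norm{Qg_w}^2$; with that correction the estimates go through exactly as in the paper.
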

Based on Proposition~\ref{prop-inexactcondition}, we then propose the following stopping criteria for the inexact computation in~\eqref{palm-sub}:
\begin{eqnarray*}
	&  {\bf (A)} & \quad \norm{\nabla \Psi_k(w^{k+1}, y^{k+1})} \leq  \frac{\min\{1, \sqrt{\tau_k}, \sqrt{\tau_k\blue{\lambda_+(Q)}}\}}{\sigma_k}\epsilon_k, \\
	&  {\bf (B)} & \quad \norm{\nabla \Psi_k(w^{k+1}, y^{k+1})} \leq \frac{\delta_k\min\{1, \sqrt{\tau_k}, \sqrt{\tau_k \blue{\lambda_+(Q)}}\}}{\sigma_k} \norm{(w^{k+1}, y^{k+1}, x^{k+1}) - (w^k, y^k, x^k)}_{\Lambda_k},
\end{eqnarray*}
where $\{\epsilon_k\}$ and $\{\delta_k\}$ are given nonnegative sequences such that $ \sum_{k=0}^\infty \epsilon_k < \infty $, and $\delta_k< 1,\; \sum_{k=0}^\infty \delta_k < \infty $. Thus, we can directly present the convergence properties of the proposed algorithm in the following theorem which combines the results in \cite[Theorem 1 \& Theorem 2]{Li2019LP} by observing that Algorithm QPPAL-Phase-II actually computes $(w^{k+1}, y^{k+1}, x^{k+1})\approx P_k(w^k, y^k, x^k) = (\Lambda_k + \sigma_k \mathcal{T}_{\tilde{l}})^{-1}(w^k,y^k,x^k)$. We omit the proof here since it can be done exactly the same way as in \cite{Li2019LP}.
\begin{theorem}\label{thm-convergece-alm}
	Suppose that the solution set of {\rm({\bf P})} and {\rm({\bf D})} is nonempty, $A$ has full row rank, and the positive sequence $ \{\tau_k \} $ is non-increasing and bounded away from zero, i.e., $ \tau_k\downarrow\tau_\infty>0 $. Let $ \{(w^k, y^k, x^k)\} $ be the sequence generated by Algorithm QPPAL-Phase-II.
	\begin{enumerate}
		\item If the algorithm is executed under the {inexactness} condition $ {\bf (A)} $, then the sequence $ \{(w^k, y^k, x^k)\} $ is bounded. Furthermore, $ \{x_k\} $ converges to an optimal solution of {\rm({\bf P})} and $ \{(w^k,y^k)\} $ converges to an optimal solution of {\rm({\bf D})}.
		\item Let $ r > \sum_{k=0}^{\infty} \epsilon_k $ be any positive constant and $\kappa >0$ be the corresponding error bound constant\footnote{The existence of such $r$ and $\kappa$ associated with the polyhedral multifunction $\mathcal{T}_{\tilde l}$ \cite{sun1993convergence} can be derived from the classic error bound result in \cite{Robison1981some}.	See, for example, \cite[Lemma 2.4]{Li2019LP}.} such that
		\begin{equation*}
			\mathrm{dist}((w,y,x), \mathcal{T}_{\tilde{l}}^{-1}(0)) \leq \kappa \mathrm{dist}(0,\mathcal{T}_{\tilde{l}}(w,y,x)), \quad \forall\, (w,y,z) \textrm{ s.t. } \mathrm{dist}((w,y,x), \mathcal{T}_{\tilde{l}}^{-1}(0)) \leq r.
		\end{equation*}
		Moreover, suppose that the initial point $ (w^0, y^0, x^0) $ satisfies $ \mathrm{dist}((w^0,y^0,x^0), \mathcal{T}_{\tilde{l}}^{-1}(0)) \leq r - \sum_{k=0}^{\infty}\epsilon_k $ and the proposed algorithm is executed under both conditions $ {\bf (A)} $ and $ {\bf (B)} $. Then, for all $ k\geq 0 $, it holds that
		\begin{equation}
			\label{eq-thm-convegence}
			\mathrm{dist}_{\Lambda_k}\left( (w^{k+1},y^{k+1},x^{k+1} ), \mathcal{T}_{\tilde{l}}^{-1}(0) \right) \leq \mu_k \mathrm{dist}_{\Lambda_k}\left( (w^{k},y^{k},x^{k} ), \mathcal{T}_{\tilde{l}}^{-1}(0) \right),
		\end{equation}
		where $ \mu_k = (1-\delta_k)^{-1} \left( \delta_k + (1+ \delta_k)\kappa \gamma_k / \sqrt{\sigma_k^2 + \kappa^2 \gamma_k^2} \right) $ with $ \gamma_k:=\max\{1, \tau_k, \tau_k\norm{Q}_2\} $ and
		\begin{equation*}
			\limsup_{k\rightarrow \infty} \mu_k = \mu_\infty = \frac{\kappa\gamma_\infty}{\sqrt{\sigma_{\infty}^2 + \kappa^2 \gamma_\infty^2}} < 1, \quad (\mu_\infty := 0 \textrm{ if } \sigma_\infty = \infty){,}
		\end{equation*}
		with $ \gamma_\infty = \max\{1, \tau_\infty, \tau_\infty \norm{Q}_2\} $.
	\end{enumerate}
\end{theorem}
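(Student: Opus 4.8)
The plan is to recognize Algorithm QPPAL-Phase-II as an inexact \emph{preconditioned} proximal point algorithm (PPA) applied to the maximal monotone operator $\mathcal{T}_{\tilde{l}}$, and then to invoke the abstract convergence theory of \cite{Li2019LP} essentially verbatim. The first step is to make the identification precise. By Lemma \ref{lem-Pk}, the exact update satisfies $P_k(w^k,y^k,x^k) = (\Lambda_k + \sigma_k\mathcal{T}_{\tilde{l}})^{-1}\Lambda_k(w^k,y^k,x^k)$, and one checks directly from the definitions of $\tilde{L}_{\sigma_k}$ and $\Psi_k$ that minimizing $\Psi_k$ over $(w,y)$ followed by the projection step in Step 2 of the algorithm produces exactly $P_k(w^k,y^k,x^k)$ when the subproblem \eqref{palm-sub} is solved exactly, the error in the inexact case being governed by $\nabla\Psi_k(w^{k+1},y^{k+1})$. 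A structural point that must be highlighted here is that, since $w$ ranges over $\cW=\Range(Q)$, the operator $Q$ is positive definite on $\cW$, so $\Lambda_k=\Diag(\tau_kQ,\tau_kI_m,I_n)$ is genuinely positive definite on $\cX$; this is precisely the reason for the choice $\cW=\Range(Q)$, and it is what makes the preconditioned PPA framework of \cite{Li2019LP} (which requires a positive definite metric) applicable.

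The second step is to translate the stopping criteria $\mathbf{(A)}$ and $\mathbf{(B)}$ into the inexactness criteria required by \cite[Section 2]{Li2019LP}. This is exactly the content of Proposition \ref{prop-inexactcondition}: it bounds $\norm{(w^{k+1},y^{k+1},x^{k+1})-P_k(w^k,y^k,x^k)}_{\Lambda_k}$ by $\sigma_k/\min\{1,\sqrt{\tau_k},\sqrt{\tau_k\lambda_+(Q)}\}$ times $\norm{\nabla\Psi_k(w^{k+1},y^{k+1})}$. Hence $\mathbf{(A)}$ forces $\norm{(w^{k+1},y^{k+1},x^{k+1})-P_k(w^k,y^k,x^k)}_{\Lambda_k}\le\epsilon_k$ with $\sum_k\epsilon_k<\infty$, and $\mathbf{(B)}$ forces the relative criterion $\norm{(w^{k+1},y^{k+1},x^{k+1})-P_k(w^k,y^k,x^k)}_{\Lambda_k}\le\delta_k\norm{(w^{k+1},y^{k+1},x^{k+1})-(w^k,y^k,x^k)}_{\Lambda_k}$ with $\delta_k<1$, $\sum_k\delta_k<\infty$; these are precisely the criteria $\mathbf{(A)}$ and $\mathbf{(B)}$ of the inexact PPA analyzed in \cite{Li2019LP}.

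With the identification in place, part 1 follows from \cite[Theorem 1]{Li2019LP}: under $\mathbf{(A)}$ the sequence $\{(w^k,y^k,x^k)\}$ is bounded and converges to some $(w^*,y^*,x^*)\in\mathcal{T}_{\tilde{l}}^{-1}(0)$. It then remains to unwind the definition of $\mathcal{T}_{\tilde{l}}$ to see that, under the assumed nonemptiness of the solution sets of $(\mathbf{P})$ and $(\mathbf{D})$ together with $A$ having full row rank, $\mathcal{T}_{\tilde{l}}^{-1}(0)\ne\emptyset$ and every element of it consists of an optimal solution $x^*$ of $(\mathbf{P})$ together with an optimal solution $(w^*,y^*)$ of $(\mathbf{D})$ (equivalently, a KKT point), which yields the claimed convergence statements. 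For part 2, we use that $\cC$ is a box, hence polyhedral, so $\mathcal{T}_{\tilde{l}}$ is a polyhedral multifunction; by Robinson's result \cite{Robison1981some} (see also \cite{sun1993convergence} and \cite[Lemma 2.4]{Li2019LP}) there exist $r>\sum_k\epsilon_k$ and $\kappa>0$ giving the stated local error bound for $\mathcal{T}_{\tilde{l}}^{-1}$. Feeding this, together with $\norm{\Lambda_k}_2\le\gamma_k:=\max\{1,\tau_k,\tau_k\norm{Q}_2\}$ and $\tau_k\downarrow\tau_\infty>0$, into \cite[Theorem 2]{Li2019LP} yields the contraction \eqref{eq-thm-convegence} with the explicit rate $\mu_k$ and its limit $\mu_\infty<1$.

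The main obstacle I expect is not any single hard estimate but the careful bookkeeping around the preconditioner: one must keep track of $\Lambda_k$ varying with $k$ (handled because $\{\tau_k\}$ is nonincreasing and bounded away from zero, so the metrics $\mathrm{dist}_{\Lambda_k}$ are uniformly comparable), verify positive definiteness of $\Lambda_k$ on $\cX$ via the $\cW=\Range(Q)$ restriction, and correctly carry the constants $\min\{1,\sqrt{\tau_k},\sqrt{\tau_k\lambda_+(Q)}\}$ and $\gamma_k=\max\{1,\tau_k,\tau_k\norm{Q}_2\}$ through the inexactness criteria and into $\mu_k$. Once these are settled, both assertions reduce to direct applications of \cite[Theorems 1 and 2]{Li2019LP}, which is why the detailed proof can legitimately be omitted.
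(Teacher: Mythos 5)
Your proposal is correct and follows essentially the same route as the paper: identify Algorithm QPPAL-Phase-II as an inexact preconditioned PPA for $\mathcal{T}_{\tilde{l}}$ via Lemma~\ref{lem-Pk}, translate the stopping criteria ${\bf (A)}$ and ${\bf (B)}$ through Proposition~\ref{prop-inexactcondition}, and invoke \cite[Theorems 1 and 2]{Li2019LP}. The paper omits the proof for exactly this reason, and your additional remarks on the positive definiteness of $\Lambda_k$ on $\cW=\Range(Q)$ and the uniform comparability of the varying metrics are the right points to check.
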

Since \blue{$ 0 < \inf_k \min\{\tau_k\lambda_+(Q), \tau_k, 1\} \leq \sup_k \max\{\tau_k\norm{Q}_2, \tau_k, 1\} < \infty $}, the distance induced by $ \Lambda_k $ can be replaced by the Euclidean distance without much difficulty. Moreover, the above theorem shows that the linear rate $\mu_k$ can be arbitrarily small if $\sigma_k$ is sufficiently large, i.e., the linear convergence of the algorithm can be ``arbitrarily fast''. However, in practice, it is not advisable to choose $\sigma_k$ to be extremely large for the purpose of numerical stability. Therefore, given that $\sigma_k\leq \sigma_\infty < \infty$, a smaller $\tau_k$ will lead to a better linear convergence rate, i.e., a smaller $\mu_k$. So for better theoretical performance, one prefers to choose a smaller $\tau_k$. In fact, Theorem \ref{thm-convergece-alm} indicates that ideally we would choose $\tau_k\leq \min\{1, \norm{Q}_2^{-1}\}$ for better convergence rate.

To summarize, we shall present our {conceptual} two-phase algorithm QPPAL in Fig.~\ref{alg-qppal}. {Readers may refer to Section \ref{sec-numerical} for the termination rules with respect to a given tolerance of our algorithms. Note that ${\tt tol}_1$ and ${\tt MaxIter}_1$ may  be {chosen differently for different problems to achieve greater}
efficiency.}
\begin{figure}[ht!]
	\centerline{\fbox{\parbox{\textwidth}{
				{\bf Algorithm {QPPAL}}: {\bf A two-phase algorithm for ({\bf D}).}
				{
				\begin{description}
					\item [Input.] Stopping tolerances ${\tt tol}_1$ and ${\tt tol}_2$, and maximum number of iterations ${\tt MaxIter}_1$ and ${\tt Maxiter}_2$.
					\item [Step 1.] Run Algorithm {QPPAL-Phase-I} and terminate it when either one of the following two conditions is met: \\
						(1) Optimality and feasibility measure is less than ${\tt tol}_1$; (2) The number of iterations reaches ${\tt MaxIter}_1$.
					\item [Step 2.] Run Algorithm {QPPAL-Phase-II} with the initial point given in {\bf Step 1}, and terminate it when either one of the following two conditions is met:\\
						(1) Optimality and feasibility measure is less than ${\tt tol}_2$; (2) The number of iterations reaches ${\tt MaxIter}_2$.
				\end{description}
				}
	}}}
	\caption{Algorithm QPPAL. \label{alg-qppal}}
\end{figure}

\subsection{A semismooth Newton method for solving \texorpdfstring{\eqref{palm-sub}}{palm}}

In this subsection, we discuss how to solve the subproblem in~\eqref{palm-sub} efficiently. To this end, for given $ (\hat{w}, \hat{y}, \hat{x})\in \cW\times \Re^m\times \Re^n $, $ \tau > 0 $ and $ \sigma >0 $, we define the function
\begin{equation*}
	\varphi(w,y):= \tilde{L}_\sigma(w,y;\hat{x}) + \frac{\tau}{2\sigma}\left( \norm{w - \hat{w}}_Q^2 + \norm{y - \hat{y}}^2 \right), \quad  \forall\,(w,y)\in \cW\times \Re^m,
\end{equation*}
whose gradient is given by
\begin{equation*}
	\nabla \varphi(w,y) =
	\left[
	\begin{array}{c}
		Qw -  Q\Pi_{\cC}(z(w,y)) + \frac{\tau}{\sigma}Q(w - \hat{w})    \\
		-b + A\Pi_{\cC}(z(w,y)) + \frac{\tau}{\sigma}(y - \hat{y})
	\end{array}
	\right], \quad (w,y)\in \cW\times \Re^m,
\end{equation*}
where $ z(w,y):=\hat{x} - \sigma(Qw - A^*y + c) $. Note that solving the minimization problem
\begin{equation}
	\label{eq-palm-sub-2}
	\min_{{w,y}}\left\{ \varphi(w,y)~\Big|~ (w,y)\in \cW\times \Re^m \right\}
\end{equation}
is equivalent to solving the following system of nonlinear equations:
\begin{eqnarray}\label{eq-wy-nonsmooth}
	\nabla\varphi(w,y) = 0, \quad (w,y)\in\cW\times\Re^m.
\end{eqnarray}

Since $\cC$ is a polyhedral set,  $\Pi_{\cC}(\cdot)$ is piecewise linear and hence strongly semismooth. Thus, we can design a semismooth Newton (SSN) method to solve \eqref{eq-wy-nonsmooth} and could expect  a superlinear or even quadratic convergence rate. For any $(w,y)\in\cW\times\Re^m$, define
\[\hat\partial^2 \varphi(w,y) := \left[ \begin{array}{cc}
	Q &   \\
	& 0
\end{array} \right]+\sigma \left[ \begin{array}{c}
	Q  \\
	-A
\end{array} \right]\partial\Pi_{\cC}(z(w,y)){\left[ \begin{array}{c}
	Q  \\
	-A
\end{array} \right]^*} +
\frac{\tau}{\sigma}\left[\begin{array}{cc}
	Q & \\
	& I \end{array}\right], \]
where \blue{$\partial\Pi_{\cC}(z(w,y))$} is the Clarke subdifferential \cite{Clarke83} of $\Pi_{\cC}(\cdot)$ at $z(w,y)$. Note that from \cite{hiriart1984generalized}, we know that
\begin{eqnarray}
	\hat{\partial}^2 \varphi(w,y)\, (d_w;d_y) = {\partial}^2 \varphi(w,y)\, (d_w;d_y), \quad  \forall \, (d_w;d_y) \in \cW\times\Re^m,
	\label{eq-Clarke}
\end{eqnarray}
where ${\partial}^2 \varphi(w,y)$ denotes the generalized Hessian of $\varphi$ at $(w,y)$, i.e., the Clarke generalized Jacobian of $\nabla \varphi$ at $(w,y)$. Given $(w,y)\in\cW\times\Re^m$, let $U \in\partial\Pi_{\cC}(z(w,y))$ and
\begin{equation}\label{p2:eq-netwon-partial}
	V = \left[ \begin{array}{cc}
		Q &   \\
		& 0
	\end{array} \right]+\sigma \left[ \begin{array}{c}
		Q  \\
		-A
	\end{array} \right]U{\left[ \begin{array}{c}
	Q  \\
	-A
\end{array} \right]^*} +
	\frac{\tau}{\sigma}\left[\begin{array}{cc}
		Q & \\
		& I \end{array}\right] .
\end{equation}
Then, we have $V\in\hat\partial^2\varphi(w,y)$.

After all the preparations, we can design a semismooth Newton method (see Fig.~\ref{alg-SSN}) as in \cite{SDPNAL} to solve \eqref{eq-wy-nonsmooth}.

\begin{figure}[ht!]
	\centerline{\fbox{\parbox{\textwidth}{
				{\bf Algorithm {SSN}}: {\bf A semismooth Newton algorithm.}
				\\
				Given {$\bar{\eta} \in (0, 1)$}, $\nu \in (0,1]$, $\delta \in (0, 1)$ and ${\rho} \in (0, 1/2)$. Choose $(w^0,y^0)\in\cW\times\Re^m$. {Iterate the following steps for $j = 0,1,\dots,$.}
				\begin{description}
					\item[Step 1.]  Find an approximate solution $(d_w^j;d_y^j)\in\cW\times\Re^m$ to
					\begin{eqnarray}\label{eqn-epsk}
						V_j(d_w;d_y) = -\nabla \varphi(w^j,y^j)
					\end{eqnarray}
					such that
					\[
					\norm{V_j(d_w^j;d_y^j) + \nabla \varphi(w^j,y^j)}\le \eta_j := \min(\bar{\eta}, \| \nabla \varphi(w^j,y^j)\|^{1+\nu}),
					\]
					where $V_j \in \hat{\partial}^2 \varphi(w^j, y^j)$ is defined as in \eqref{p2:eq-netwon-partial} with $U_j\in \partial \Pi_{\cC}\big(z(w^j, y^j)\big)$.
					\item[Step 2.]  Set $\alpha_j = \delta^{m_j}$, where $m_j$ is the first nonnegative integer $m$ for which
					\begin{eqnarray}\label{Armijo}
						\varphi(w^j + \delta^{m} d_w^j,y^j+\delta^m d_y^j) \leq \varphi(w^j,y^j) + {\rho} \delta^{m}
						\langle \nabla \varphi(w^j,y^j), (d^j_w;d^j_y) \rangle.
					\end{eqnarray}
					\item[Step 3.] Set $w^{j+1} = w^j + \alpha_j \, d_w^j$ and $y^{j+1} = y^j + \alpha_j \, d_y^j$.
				\end{description}
	}}}
	\caption{Algorithm SSN. \label{alg-SSN}}
\end{figure}

The convergence results for the above {SSN} algorithm are stated in Theorem \ref{convergence-zwy-newton}.
\begin{theorem}\label{convergence-zwy-newton}
	Let the sequence $\{(w^j,y^j)\}$ be generated by Algorithm {SSN}. Suppose at each step $j\ge 0$, the tolerance $\eta_j$ is achieved, i.e.,
	\begin{equation*}
		\norm{V_j(d_w;d_y) + \nabla \varphi(w^j,y^j)}\le \eta_j.
	\end{equation*}
	Then the  sequence $\{(w^j,y^j)\}$ converges to the unique optimal solution, say $(\bar w, \bar y)$, of the optimization problem in \eqref{eq-palm-sub-2} and
	\begin{equation}
		\|(w^{j+1},y^{j+1}) - (\bar w,\bar y) \| = O(\norm{(w^j,y^j) - (\bar w,\bar y)})^{1+\nu}.
	\end{equation}
\end{theorem}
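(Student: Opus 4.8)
The plan is to follow the now-standard convergence analysis of globalized inexact semismooth Newton methods, as developed for the (proximal) ALM subproblems in \cite{SDPNAL, Li2018QSDPNAL} and the references therein; the excerpt already alludes to this by stating the algorithm ``as in \cite{SDPNAL}''. First I would record the structural properties of $\varphi$ that make the machinery applicable. Since $\cW=\mathrm{Range}(Q)$, the restriction of $Q$ to $\cW$ is positive definite with smallest positive eigenvalue $\lambda_+(Q)>0$; hence, from the explicit form of $\hat\partial^2\varphi$ and the fact that every $U\in\partial\Pi_{\cC}(\cdot)$ is symmetric positive semidefinite with $\norm{U}\le 1$, every $V\in\hat\partial^2\varphi(w,y)$ satisfies, on $\cW\times\Re^m$,
\[
\tfrac{\tau}{\sigma}\min\{\lambda_+(Q),1\}\,I \;\preceq\; V \;\preceq\; M\,I
\]
for some constant $M$ independent of $(w,y)$. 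Thus the generalized Hessian of $\varphi$ is uniformly positive definite with uniformly bounded condition number, so $\varphi$ is strongly convex, hence coercive, on $\cW\times\Re^m$; consequently the minimizer $(\bar w,\bar y)$ of \eqref{eq-palm-sub-2} exists, is unique, and is the unique zero of $\nabla\varphi$, and each Newton system \eqref{eqn-epsk} is uniquely solvable. Using the residual bound $\norm{V_j(d_w^j;d_y^j)+\nabla\varphi(w^j,y^j)}\le\eta_j$ together with the two-sided bound above (and, if necessary, $\bar\eta$ taken small relative to the condition number), one checks that $(d_w^j;d_y^j)$ is a genuine descent direction for $\varphi$ at $(w^j,y^j)$ whenever $\nabla\varphi(w^j,y^j)\neq 0$ and that $\langle\nabla\varphi(w^j,y^j),(d_w^j;d_y^j)\rangle\le -c\,\norm{\nabla\varphi(w^j,y^j)}^2$ for some constant $c>0$.

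Next I would prove global convergence to $(\bar w,\bar y)$. Because $\varphi\in C^{1,1}$ and $(d_w^j;d_y^j)$ is a descent direction, the Armijo line search in Step 2 terminates with a finite $m_j$, so $\{\varphi(w^j,y^j)\}$ is nonincreasing and, by coercivity, $\{(w^j,y^j)\}$ stays in a bounded level set on which $\{V_j\}$ and $\{(d_w^j;d_y^j)\}$ are bounded. The classical line-search global-convergence argument --- telescoping the Armijo inequality to get $\sum_j\alpha_j\,|\langle\nabla\varphi(w^j,y^j),(d_w^j;d_y^j)\rangle|<\infty$, then distinguishing the cases $\alpha_j\not\to 0$ and $\alpha_j\to 0$ along subsequences (the latter contradicting the strict negativity of the directional derivative at a hypothetical non-stationary accumulation point) --- then forces $\nabla\varphi(w^j,y^j)\to 0$. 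Since $(\bar w,\bar y)$ is the unique zero of $\nabla\varphi$, every accumulation point of the bounded sequence equals $(\bar w,\bar y)$, hence $(w^j,y^j)\to(\bar w,\bar y)$.

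For the local rate: since $\cC$ is polyhedral, $\Pi_{\cC}$ is piecewise affine, so $\nabla\varphi$ --- a linear map composed with $\Pi_{\cC}$ composed with an affine map, plus affine terms --- is piecewise affine and therefore strongly semismooth. Combining this with the identification \eqref{eq-Clarke} and the uniform nonsingularity of $\hat\partial^2\varphi$, the usual inexact semismooth Newton estimate gives, using $\nabla\varphi(\bar w,\bar y)=0$ and Lipschitz continuity of $\nabla\varphi$,
\[
\norm{(w^j,y^j)+(d_w^j;d_y^j)-(\bar w,\bar y)} \le \norm{V_j^{-1}}\Big(\norm{V_j\big((w^j,y^j)-(\bar w,\bar y)\big)-\nabla\varphi(w^j,y^j)} + \eta_j\Big) = O\big(\norm{(w^j,y^j)-(\bar w,\bar y)}^{1+\nu}\big),
\]
where strong semismoothness controls the first term (exponent $2\ge 1+\nu$) and $\eta_j\le\norm{\nabla\varphi(w^j,y^j)}^{1+\nu}$ controls the second. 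It remains to show that, once the iterates are close to $(\bar w,\bar y)$ (which holds for all large $j$ by the previous paragraph), the unit step is accepted: since $\rho\in(0,1/2)$ and $\langle\nabla\varphi(w^j,y^j),(d_w^j;d_y^j)\rangle$ is comparable to $-\langle(d_w^j;d_y^j),V_j(d_w^j;d_y^j)\rangle$, a semismooth second-order expansion of $\varphi$ along $(d_w^j;d_y^j)$ (exactly as in \cite{SDPNAL} and its precursors) shows that \eqref{Armijo} holds with $m=0$ for all $j$ large. Hence $\alpha_j=1$ eventually, and the displayed estimate becomes $\norm{(w^{j+1},y^{j+1})-(\bar w,\bar y)}=O(\norm{(w^j,y^j)-(\bar w,\bar y)}^{1+\nu})$, as claimed.

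The step I expect to be the main obstacle is the last one: showing that the globalizing line search does not interfere with the local rate, i.e., that the full step is eventually accepted, in a setting where $\varphi$ is merely $C^{1,1}$ rather than twice continuously differentiable. Everything else is routine and runs parallel to the corresponding arguments in \cite{SDPNAL, Li2018QSDPNAL}, which is precisely why the excerpt defers to those references.
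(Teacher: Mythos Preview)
Your proposal is correct and follows essentially the same route as the paper's own proof: the paper records that $\Pi_{\cC}$ is strongly semismooth, that $\varphi$ is strongly convex on $\cW\times\Re^m$ (hence the minimizer is unique and level sets are compact), that every $V\in\hat\partial^2\varphi(w,y)$ is self-adjoint and positive definite on $\cW\times\Re^m$, and then simply invokes \cite[Proposition~3.3, Theorems~3.4 \& 3.5]{SDPNAL} for the descent property, global convergence, and local superlinear rate with eventual unit-step acceptance. Your write-up is just an explicit unpacking of exactly those cited results, with the same key ingredients in the same order.
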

\begin{proof}
	Note that  $\Pi_{\cC}(\cdot)$ is strongly semismooth. Since $\varphi(w,y)$ is a strongly convex function defined on $\cW\times\Re^n$, problem \eqref{eq-palm-sub-2} then has a unique solution $(\bar w,\bar y)$ and the level set $\{(w,y)\in\cW\times\Re^m \,|\, \varphi(w,y)\le \varphi(w^0,y^0)\}$ is compact. Therefore, the sequence generated by {SSN} is bounded as $(d_w^j,d_y^j)$ is a descent direction \cite[Propsition 3.3]{SDPNAL}. Note that for all $(w,y)\in\cW\times\Re^n$, every $V \in\hat\partial^2 \varphi(w,y)$ is self-adjoint and positive definite on $ \cW\times\Re^n$. Thus, the desired convergence can be easily obtained by combining \cite[Theorem 3.4 \& 3.5]{SDPNAL}.
\end{proof}

In Theorem \ref{convergence-zwy-newton}, it is clear that $V_j$ in the Newton system \eqref{eqn-epsk} in the form of \eqref{p2:eq-netwon-partial} is guaranteed to be positive definite as a positive definite proximal term is added. Indeed, adding the proximal term in our algorithmic design relieves the need of requiring additional conditions, such as the constraint nondegenerate condition (see e.g., \cite{SDPNAL}), to ensure the nonsingularity of  $V_j$ in \eqref{eqn-epsk}. Moreover, to improve the condition number of the corresponding coefficient matrix, we would prefer a larger $\tau_k$. However, to obtain better convergence rate for Algorithm QPPAL-Phase-II, we want a smaller $\tau_k$. The two opposing effects imply that in the implementation of the algorithm we need to choose the parameter $\tau_k$ appropriately to balance the efficiency and robustness of the proposed algorithm.

\section{Implementation of QPPAL}
\label{sec-implementation}
{In this section, we provide implementation details for our proposed QPPAL. Note that in both phase one and phase two algorithms, the main difficulty is to solve several types of {linear systems}. Thus, our main focus in this section is on how to solve such linear systems efficiently. Moreover, since choosing the parameters $\tau_k$ and $\sigma_k$ {plays} an important role in the numerical implementation of the QPPAL-Phase-II algorithm, we also
describe at the end of this section on how to {adjust} these parameters dynamically based on the optimality and infeasibility measures that will be defined later.}

\subsection{Implmentation of QPPAL-Phase-I}
\label{subsec-implementation-phase-I}
{We first} discuss how to perform {\bf Step 1} in Algorithm QPPAL-Phase-I efficiently. Firstly, in order to obtain $ \bar{y}^k $, a system of linear equations of the following form is solved:
\begin{equation*}
	-(b + \delta_E^k) + \sigma A(z^k - Qw^k + A^*y - c+ \sigma^{-1}x^k ) = 0.
\end{equation*}
{
By simple calculations, we derive that
\begin{equation*}
	 {(\sigma AA^*)\, \bar{y}^k = g^k +  \delta_E^k,}
\end{equation*}
where $g^k = (b - Ax^k)- \sigma A(z^k -Qw^k - c)$.
Similarly, $ y^{k+1} $ is computed as follows:
\begin{equation*}
	{(\sigma AA^*)\, y^{k+1} = g^{k+1}
	+  \hat{\delta}_E^k,}
\end{equation*}
where $g^{k+1} = (b - Ax^k )- \sigma A(z^{k+1} -Qw^{k+1} - c) $.
Note that both $ \delta_E^k $ and $ \hat{\delta}_E^k $ should be
interpreted as the residual vectors obtained by  solving
the above linear systems without these terms on the right-hand-sides.
Observe that when the sparse Cholesky factorization of $AA^*$ (which only need
to be computed once) can be obtained at a moderate cost,
we can solve these two linear systems by backward-forward substitutions cheaply
in each iteration of QPPAL-Phase-I.
In the event when solving the linear systems directly is too costly or not possible due to the memory
constraint,
one may use an iterative solver, such as
a preconditioned minimum residual method (MINRES) \cite{Paige-Saunders1975} or
a preconditioned symmetric quasi-minimal residual method (PSQMR) \cite{Freund1994}, to solve the target linear systems. In the
latter situation, the
residual vectors just need to satisfy the conditions that
$$
 \norm{\delta_E^k} \leq \epsilon_k, \quad  \norm{\hat{\delta}_E^k} \leq \epsilon_k,
$$
where $\{\epsilon_k\}$ is a given nonnegative summable sequence, for example, one
may choose $\epsilon_k = \min\{ 10^{-2},1/k^{1.1}\}$ for all $k\geq 1.$
Observe that the accuracy requirement on the residual norm is actually quite mild,
since for the previous choice, $\epsilon_k \approx 4\times 10^{-5}$ even when
$k$ is as large as $10000$.

In order to obtain $\bar w^k$ and $w^{k+1}$, we need to solve a system of linear equations
of the form
\begin{equation}
	\label{eq-qppal-w}
	(Q +\sigma Q^2)  w \approx Qh,\quad w\in\cW,
\end{equation}
where $h\in\Re^n$ is a given vector.
In particular, when computing $\bar{w}^k$,
$h:= \sigma (z^k - A^*\bar{y}^k - c) + x^k $, and when computing $w^{k+1}$, $h:= \sigma (z^{k+1} - A^*\bar{y}^k - c) + x^k $.
Typically, we can only afford to solve the linear system \eqref{eq-qppal-w} by an iterative solver
inexactly. In this case, we require
the residual vector $\delta_Q$ corresponding to a computed
solution $\hat{w}$ to satisfy the condition that
\begin{equation}
	\label{eq-qppal-w-r}
	\norm{\delta_Q} = \norm{Qh- (Q+ \sigma Q^2) \hat{w}}\le \epsilon_k.
\end{equation}
}

Note that there is a unique solution which solves \eqref{eq-qppal-w} exactly {since $Q$ is positive definite on the subspace $\cW$}. Under the high dimensional setting where $n$ is huge and the matrix representation of $Q$ may not be available, \eqref{eq-qppal-w} can only be solved inexactly by an iterative method. Indeed, based on our numerical experiments for solving QP relaxations for certain classes of integer programming problems and the QP problems arising from portfolio optimization, matrices $Q$ in these problems are usually fully dense and large-scale. Hence, a direct solver may not be sufficiently efficient. Moreover, due to the presence of the subspace constraint $w\in\cW$, it is apparently difficult to solve \eqref{eq-qppal-w} if $\cW\neq \Re^n$. Fortunately, we are able to propose the following strategy to rectify this difficulty. Instead of solving \eqref{eq-qppal-w}, we propose to solve the following simpler linear system
\begin{equation}\label{eq-qppal-w-2}
	(I + \sigma Q) w \approx h,
\end{equation}
with the {residual norm of a computed solution $\hat{w}$ satisfying}
\begin{equation}\label{eq-qppal-w-r2}
	\norm{h-(I+\sigma Q) \hat{w} }\le \frac{\epsilon_k}{\norm{Q}_2}.
\end{equation}
{The following simple spectral analysis will show that the coefficient matrix in \eqref{eq-qppal-w-2} generally would have a much better conditioning than that of \eqref{eq-qppal-w}. Suppose $\lambda_{1}$, $\lambda_{n}$, $\lambda_{n}^+$ denote the largest, smallest, and smallest positive, eigenvalue of $Q$, respectively. Based on standard convergence analysis of an iterative method such as MINRES, we know that its convergence rate for solving \eqref{eq-qppal-w} is determined by the condition number $\kappa_1 = \frac{\lambda_1}{\lambda_n^+}\frac{1+\sig\lambda_1}{1+\sig\lambda_n^+}$, whereas the convergence rate for solving \eqref{eq-qppal-w-2} is determined by $\kappa_2 = \frac{1+\sig\lambda_1}{1+\sig\lambda_n}$. For the case that $\lambda_n^+ = \lambda_n$, i.e., $Q$ is positive definite, it is clear that $\kappa_2 = \frac{\lambda_n}{\lambda_1}\kappa_1 \leq \kappa_1$. For the case where $\lambda_n^+ > \lambda_n = 0$, i.e., $Q$ is singular, it is clear that $\kappa_2$ would be smaller than $\kappa_1$ if $\lambda_n^+(1+\sig\lambda_n^+) < \lambda_1$, or equivalently $\lambda_n^+ < \frac{1}{2}(\sqrt{\sig^2+4\lambda_1}-\sig)$. One would expect the latter condition to hold for most matrices $Q$ unless $\lambda_n^+$ is large.
}

We can apply an iterative method (e.g., PSQMR, MINRES) to solve \eqref{eq-qppal-w-2} to obtain an approximate solution $\hat w$ such that \eqref{eq-qppal-w-r2} holds for $\hat w$. Then
\[
\norm{\delta_Q} =
\norm{Q(h - (I+\sigma Q)\hat w)} \le \norm{Q}_2\,\norm{h-(I+\sigma Q)\hat w } \le \norm{Q}_2\frac{\epsilon_k}{\norm{Q}_2} = \epsilon_k.
\]
Thus, we have that $w^* = \Pi_{\cW}(\hat w)\in\cW$ solves \eqref{eq-qppal-w} with the corresponding residual satisfying \eqref{eq-qppal-w-r}. Surprisingly, much to our delight, it is not necessary for us to compute {$w^*$} explicitly since to update the iterations in Algorithm {QPPAL-Phase-I}, we only need to compute $Qw^*$ which is easily shown to be equal to $Q\hat{w}$.  Hence, we only need to solve the linear system \eqref{eq-qppal-w-2} to obtain a approximate solution $\hat w$ and then compute $Q\hat w$.

\subsection{Implementation of QPPAL-Phase-II}
\label{subsec-implementaion-phase-II}
{We next illustrate} how to solve the linear system~\eqref{eqn-epsk} at each iteration of \textrm{SSN} efficiently. Notice that for a given $ \sigma >0 $, $ \hat{x}\in \Re^n $ and $ z(w,y) = \hat{x} - \sigma (Qw - A^*y + c) $, we can choose $ U\in \partial \Pi_{\cC}(z(w,y)) $ to be a diagonal matrix of order $ n $ whose diagonal entries are given as follows:
\begin{equation*}
	U_{ii} =
	\left\{
	\begin{array}{ll}
		1 & l_i < (z(w,y))_i < u_i \\
		0 & \textrm{otherwise}
	\end{array}
	\right., \quad 1\leq i\leq n.
\end{equation*}
Thus, the $(n+m)\times (n+m)$ coefficient matrix is given by
\begin{equation}
	\label{eq-matV}
	V = \left[
	\begin{array}{cc}
		(1+\frac{\tau}{\sigma}) Q + \sigma QUQ & -\sigma QUA^* \\
		-\sigma AUQ & \frac{\tau}{\sigma}I_m + \sigma AUA^*
	\end{array}
	\right],
\end{equation}
for a given $ \tau >0 $. Recall that $ V $ is positive definite on $ \cW\times \Re^m $ and hence for any $ {r}\in \cW\times \Re^m  $, the linear system
\begin{equation}
	\label{eq-ssn-linsys}
	V(d_w; d_y) = {r},\quad (d_w;d_y)\in \cW\times \Re^m{,}
\end{equation}
has a unique solution. In the following discussion, we always take
\begin{equation*}
	{r} :=
	\left[
	\begin{array}{c}
		Q{r_1} \\
		{r_2}
	\end{array}
	\right]:=-
	\left[
	\begin{array}{c}
		Q\left( w - \Pi_{\cC}(z(w,y)) + \frac{\tau}{\sigma}(w - \hat{w}) \right) \\
		-b + A \Pi_{\cC}(z(w,y)) + \frac{\tau}{\sigma} (y - \hat{y})
	\end{array}
	\right] = -\nabla \varphi(w,y).
\end{equation*}
Since $Q$ is possibly a large dimensional and dense matrix, applying a direct method to solve \eqref{eq-ssn-linsys} may not be practical. Moreover, matrix-vector multiplications involving $Q$ could be expensive. Therefore, iterative solvers such as PSQMR for solving \eqref{eq-ssn-linsys} may also be expensive. To resolve this issue, instead of solving \eqref{eq-ssn-linsys} directly, we solve a simpler linear system to compute $ Qd_w $ approximately via solving a nonsymmetric linear system. In particular, we shall use the BICGSTAB method studied in \cite{Saad2003} to solve the new system. The next proposition (see \cite[Proposition 4.1]{Li2018QSDPNAL}) demonstrates this approach and further implies that only one matrix-vector multiplication with respect to $ Q $ is required in each BICGSTAB iteration. This indeed reduces the computational cost compared with using $ V $ directly (especially when $Q$ is dense), since the latter requires two such matrix-vector multiplications in each PSQMR iteration.

\begin{prop}
	Let the matrix $ V $ be given by~\eqref{eq-matV}, and denote
	\begin{equation}
		\label{eq-prop-Vhat}
		\hat{V}:=
		\left[
		\begin{array}{cc}
			(1+\frac{\tau}{\sigma})I_n + \sigma UQ & -\sigma UA^* \\
			-\sigma AUQ & \frac{\tau}{\sigma}I_m + \sigma AUA^*
		\end{array}
		\right].
	\end{equation}
	Suppose $ (\hat{d}_w; \hat{d}_y) $ is an approximate solution to the following system:
	\begin{equation}\label{eq-V-assymetric}
		\hat{V}(\hat{d}_w; \hat{d}_y) \approx ({r_1}; {r_2})
	\end{equation}
	with the residual satisfying
	\begin{equation*}
		\norm{{\hat{V}}(\hat{d}_w; \hat{d}_y) - ({r_1}; {r_2})} \leq \frac{\epsilon}{\max\{\norm{Q}_2, 1\}}.
	\end{equation*}
	Let $ d_w:=\Pi_{\cW}(\hat{d}_w)\in \cW $.
	Then $ (d_w, \hat{d}_y) \in \cW\times \Re^m $ solves~\eqref{eq-ssn-linsys} with the residual norm satisfying
	\begin{equation*}
		\norm{V(d_w; \hat{d}_y) - (Q{r_1}; {r_2})}\leq \epsilon.
	\end{equation*}
	Moreover,
	\begin{equation*}
		Qd_w = Q\hat{d}_w, \quad \left\langle d_w, Qd_w \right \rangle =
		\left\langle \hat{d}_w, Q\hat{d}_w \right \rangle.
	\end{equation*}
\end{prop}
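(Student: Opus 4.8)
The plan is to reduce everything to the block factorization $V = \Diag(Q, I_m)\,\hat V$, which follows by simply multiplying out the four blocks of $\hat V$ in \eqref{eq-prop-Vhat} on the left by $\Diag(Q, I_m)$ and comparing with the expression \eqref{eq-matV} for $V$. Together with two elementary facts about the projector $\Pi_{\cW}$ onto $\cW = \Range(Q)$, this factorization is essentially all that is needed.

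First I would record those two facts. Since $Q\in\Sn_+$, we have $\cW^\perp = \Range(Q)^\perp = \mathrm{Null}(Q)$, so that for $d_w := \Pi_{\cW}(\hat d_w)$ the vector $n_0 := \hat d_w - d_w = \Pi_{\cW^\perp}(\hat d_w)$ satisfies $Q n_0 = 0$, which immediately gives $Q d_w = Q\hat d_w$. Writing $\hat d_w = d_w + n_0$ and using $Q\hat d_w \in \Range(Q) = \cW$ together with $n_0 \in \cW^\perp$, we also obtain $\inprod{\hat d_w}{Q\hat d_w} = \inprod{d_w}{Q\hat d_w} = \inprod{d_w}{Q d_w}$. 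These are precisely the two ``Moreover'' identities in the statement.

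Next I would transfer the residual bound from $\hat V$ to $V$. From $Q n_0 = 0$ one checks that $\hat V(n_0; 0) = \big((1+\tfrac{\tau}{\sigma}) n_0;\, 0\big)$, hence $\hat V(\hat d_w; \hat d_y) = \hat V(d_w; \hat d_y) + \big((1+\tfrac{\tau}{\sigma}) n_0;\, 0\big)$; applying $\Diag(Q, I_m)$ on the left annihilates the extra term because $Q n_0 = 0$. Therefore
\[
V(d_w; \hat d_y) = \Diag(Q, I_m)\,\hat V(d_w; \hat d_y) = \Diag(Q, I_m)\,\hat V(\hat d_w; \hat d_y),
\]
and consequently
\[
V(d_w; \hat d_y) - (Q r_1; r_2) = \Diag(Q, I_m)\big(\hat V(\hat d_w; \hat d_y) - (r_1; r_2)\big).
\]
Taking norms, using $\norm{\Diag(Q, I_m)}_2 = \max\{\norm{Q}_2, 1\}$ and the assumed bound $\norm{\hat V(\hat d_w; \hat d_y) - (r_1; r_2)} \le \epsilon/\max\{\norm{Q}_2, 1\}$, yields the claimed estimate $\norm{V(d_w; \hat d_y) - (Q r_1; r_2)} \le \epsilon$. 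That $(d_w, \hat d_y) \in \cW \times \Re^m$ is clear from the definition of $d_w$, and the well-posedness of the system \eqref{eq-ssn-linsys} on $\cW \times \Re^m$ has already been recorded above via the positive definiteness of $V$ there.

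The computations are all routine; the only point that needs care is the bookkeeping with $\mathrm{Null}(Q)$ — namely that the component of $\hat d_w$ orthogonal to $\cW$ is exactly the part that neither changes $Q\hat d_w$ and $\inprod{\hat d_w}{Q\hat d_w}$ nor survives left-multiplication by $\Diag(Q, I_m)$, so that solving the cheaper nonsymmetric system $\hat V(\hat d_w; \hat d_y) \approx (r_1; r_2)$ and then projecting is enough. Since this is essentially \cite[Proposition 4.1]{Li2018QSDPNAL}, one could alternatively just invoke that result directly.
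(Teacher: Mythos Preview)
Your proof is correct. The paper itself does not give an in-text proof of this proposition at all; it simply cites \cite[Proposition~4.1]{Li2018QSDPNAL}, and the argument you have written out --- the block factorization $V = \Diag(Q, I_m)\,\hat V$, the use of $\cW^\perp = \mathrm{Null}(Q)$ to show that projecting onto $\cW$ leaves $Q\hat d_w$ and the associated quadratic form unchanged, and then the operator-norm estimate via $\norm{\Diag(Q, I_m)}_2 = \max\{\norm{Q}_2,1\}$ --- is exactly the content of that cited result, so your approach coincides with the intended one.
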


Again, similar to the case in Algorithm QPPAL-Phase-I, when updating the variable $w$, we do not need to compute $d_w$ explicitly since we can safely excute the algorithm by only updating $Qw$, namely, computing $Qd_w$. The fact that one can replace~\eqref{eq-ssn-linsys} by the simpler linear system \eqref{eq-V-assymetric} is a powerful feature of our proposed algorithm.

Finally, we can further reduce the size of the linear system in \eqref{eq-prop-Vhat} by exploiting the special structure of the diagonal matrix $U$. To this end, we assume without loss of generality that $U$ has the following representation
\begin{equation*}
	U =
	\left[
	\begin{array}{cc}
		I_p & 0 \\
		0 & 0
	\end{array}
	\right]\in \Re^{n\times n},\quad I_p\in \Re^{p\times p},\; 0\leq p \leq n.
\end{equation*}
Based on the above representation, we can then partition the vectors $\hat{d}_w$ {and
$r_1$}, the matrices $A$ and $Q$ accordingly as follows:
\begin{equation*}
	\hat{d}_w = \left[
	\begin{array}{c}
		\hat{d}_w^P \\  \hat{d}_w^Z
	\end{array}
	\right],\quad {r_1} =
	\left[
	\begin{array}{c}
		{r_1}^P \\  {r_1}^Z
	\end{array}
	\right],\quad
	A =
	\left[
	\begin{array}{cc}
		A_P & A_Z
	\end{array}
	\right],\quad Q =
	\left[
	\begin{array}{cc}
		Q_{PP} & Q_{PZ} \\
		Q_{PZ}^T & Q_{ZZ}
	\end{array}
	\right],
\end{equation*}
where $\hat{d}_w^P\in \Re^{p}$, $\hat{d}_w^Z \in \Re^{n-p}$, ${r_1}^P\in \Re^{p}$, ${r_1}^Z \in \Re^{n-p}$, $A_P\in \Re^{m\times p},\; A_Z\in \Re^{m\times (n-p)}$, $Q_{PP}\in \Re^{p\times p}$, $Q_{PZ}\in \Re^{p\times (n-p)}$ and $Q_{ZZ}\in \Re^{(n-p)\times (n-p)}$. Moreover, simple calculations show that
\[
UQ =
\left[
\begin{array}{cc}
	Q_{PP} & Q_{PZ} \\
	0 & 0
\end{array}
\right],\quad
UA^* =
\left[
\begin{array}{c}
	A_P^* \\  0
\end{array}
\right],\quad
AUA^* =  A_PA_P^* .
\]	
For notational simplicity, we denote $\nu := \sigma^{-1}\tau$. Based on	the aforementioned partitions, we rewrite the linear system \eqref{eq-V-assymetric} as follows:
\[
\begin{aligned}
	(1+\nu)\hat{d}_w^Z &= {r_1}^Z, \\
	\left((1+\nu)I_p + \sigma Q_{PP}\right)\hat{d}_w^P - \sigma A_P^*\hat{d}_y & = {r_1}^P - \sigma Q_{PZ}\hat{d}_w^Z = {r_1}^P - \sigma(1+\nu)^{-1}Q_{PZ}{r_1}^Z =: {\bar{r}_1}, \\
	-\sigma A_PQ_{PP}\hat{d}_w^P + \left(\nu I_m + \sigma A_P A_P^*\right)\hat{d}_y & = {r_2} + \sigma A_P Q_{PZ} \hat{d}_w^Z = {r_2} + \sigma (1+\nu)^{-1}A_PQ_{PZ}{r_1}^Z =: {\bar{r}_2}.
\end{aligned}
\]
Now, by writing the third equation as
\[
- A_P\big((1+\nu)I_p +\sigma Q_{PP}\big) \hat{d}_w^P  + (1+\nu)A_P \hat{d}_w^P
+ \left(\nu I_m + \sigma A_P A_P^*\right)\hat{d}_y
= {\bar{r}_2}
\]
and making use of the second equation, we get after some simple manipulations that
\begin{eqnarray}
	\hat{d}_y = \nu^{-1} \Big( A_P{\bar{r}_1} +{\bar{r}_2}  - (1+\nu)A_P\hat{d}_w^P\Big) .
	\label{eq-hatdy}
\end{eqnarray}

By using the above expression of $\hat{d}_y$ in  the second equation, we get
\begin{eqnarray}
	\left((1+\nu)I_p + \sigma Q_{PP} + \nu^{-1}(1+\nu)\sigma  A_P^*A_P\right)\hat{d}_w^P
	& =& {\bar{r}_1 + \nu^{-1}\sigma} A_P^*\left(A_P{\bar{r}_1} + {\bar{r}_2}\right).
	\label{eq-hatdwP}
\end{eqnarray}
It is obvious that the new target linear system \eqref{eq-hatdwP} has a symmetric positive definite coefficient matrix of size $p \leq n$.	Therefore, we can apply a direct solver to solve \eqref{eq-hatdwP} via computing the Cholesky factorization of the coefficient matrix when $p \ll n$ or an iterative solver such as PSQMR when $ p \approx n$. Observe that by exploiting the active-set structure in $U$, we only need to  solve a smaller-scale problem of dimension $p\times p$ instead of the $(n+m)$-dimensional problem \eqref{eq-V-assymetric}.

As a conclusion, instead of solving the non-symmetric linear system \eqref{eq-V-assymetric}, we can solve the  smaller symmetric  positive definite linear system \eqref{eq-hatdwP} for $\hat{d}_w^P$. Once that is computed, we can obtain $\hat{d}_y$ from \eqref{eq-hatdy}. We should mention that while \eqref{eq-hatdwP} appears to be more appealing than \eqref{eq-V-assymetric}, the former can be much more ill-conditioned than the latter when $\sigma$ is large. Thus when \eqref{eq-hatdwP} itself is large-scale and requires an iterative solver, it would be more efficient to  apply the BICGSTAB solver to \eqref{eq-V-assymetric} directly when $\sigma$ is large.

\subsection{Updating rules for \texorpdfstring{$\tau_k$ and $\sigma_k$}{tausig}}
\label{subsec-tau-sigma}
{Before describing the updating rule for $\tau_k$ and $\sigma_k$, we shall define the optimality and infeasibility measures of an approximate optimal solution $(x^k,z^k,w^k,y^k)\in \Re^n\times\Re^n\times \cW\times \Re^m$ for problems ({\bf P}) and ({\bf D}) at the $k$-th iteration of QPPAL-Phase-II algorithm. In particular, we define the following relative KKT residuals:
\[
    \eta_p^k :=  \frac{\norm{Ax^k - b}}{1 + \norm{b}},\quad \eta_d^k :=  \frac{\norm{z^k - Qw^k + A^*y^k-c}}{1 + \norm{c}}, \quad \eta_Q^k :=  \frac{\norm{Qx^k - Qw^k}}{1 + \norm{Qx^k} + \norm{Qw^k}},\quad \eta_{\cC}^k:=\frac{\norm{x^k - \Pi_{\cC}(x^k - z^k)}}{1 + \norm{x^k} +\norm{z^k}}.
\]
Then, starting from $\sigma_0 > 0$ and $\tau_0 > 0$, we update $\sigma_k$ as follows:
\[
    \sigma_{k} = \left\{
        \begin{array}{ll}
        \frac{5}{4}\sigma_{k-1}  &  \textrm{if } \; \max\{\eta_p^k,\eta_Q^k,\eta_{\cC}^k\} < {\frac{3}{4}\eta_d^k},\\
            \frac{4}{5}\sigma_{k-1}  &  \textrm{if } \; {\eta_d^k} < \frac{3}{4}\max\{\eta_p^k,\eta_Q^k,\eta_{\cC}^k\},\\
            \sigma_{k-1} & \textrm{otherwise,}
        \end{array}
    \right.\quad \forall \; k \geq 1.
\]
Moreover, we update $\tau_k$ such that $\frac{\tau_k}{\sigma_k} = \max\{10^{-12}, \kappa\cdot k^{-2.5}\}$, for $k\geq 1$, where $\kappa>0$ is a given constant that depends on the problems to be solved. Typically, $\kappa$ is chosen to be in the
range $[10^{-4}, 10^{2}].$
}

\section{Numerical experiments}
\label{sec-numerical}
{Consider the following more general QP problem with both linear equality and inequality constraints:
\begin{equation}
	\label{eq-qp}
	\min_{x'\in \Re^{N}} \Big\{\frac{1}{2} \inprod{x'}{ Q' x'} + \inprod{c'}{x'}  \,\mid\,
		A_E x'   =  b_E, \; A_I x'   \le b_I, \;
		x' \in \cC' \Big\},
\end{equation}
where $Q'\in \cS^N_+$, $c'\in \Re^N$, $ A_E:\Re^{N}\to\Re^{m_E}$,  $ A_I:\Re^N\to\Re^{m_I}$ are two linear maps, $b_E\in \Re^{m_E}$, $b_I\in \Re^{m_I}$ and $\cC':=\{x'\in \Re^N\,\mid\, -\infty\leq\ell'\leq x'\leq u'\leq \infty\}$. By adding a slack variable $s\in \Re^{m_I}$, we can rewrite \eqref{eq-qp} into the following form:
\begin{equation}
\label{eq-qp-standard}
    \min_{x'\in\Re^n, s\in \Re^{m_I}}\left\{ \frac{1}{2} \inprod{x'}{ Q'x'} + \inprod{c'}{x'} \,\mid\, A_E x'  =  b_E, \,A_I x' + s = b_I,\, x' \in \cC', \, s \ge 0 \right\}.
\end{equation}
By taking $n = N + m_I$, $m = m_E + m_I$, $x = (x';s)\in \Re^n$, $\cC:=\{x = (x'; s)\in \Re^n\,\mid\, x'\in \cC,\, s\geq 0\}$, and
\[
    Q := \begin{pmatrix} Q' & 0 \\ 0 & 0\end{pmatrix}\in \cS^{n}_+,\quad c:=\begin{pmatrix} c' \\ 0 \end{pmatrix}\in \Re^{n},\quad  A := \begin{pmatrix} A_E & 0 \\ A_I & I \end{pmatrix}\in \Re^{m\times n}, \quad b := \begin{pmatrix} b_E \\ b_I \end{pmatrix} \in \Re^m,
\]
it is readily seen that problem \eqref{eq-qp-standard} is of the form ({\bf P}) and hence QPPAL can be applied to it directly.

In our numerical experiments, we measure the accuracy of an approximate optimal solution $(x^k,z^k,w^k,y^k) \in \Re^n\times \Re^n \times \cW \times \Re^m$ for QP \eqref{eq-qp-standard} and its dual problem by using the same relative KKT residuals, i.e., $\eta_{p}^k, \eta_{d}^k, \eta_{Q}^k$ and $\eta_{\cC}^k$, as defined in Section \ref{subsec-tau-sigma}. Additionally, we compute the relative gap by
\[
\eta_{g}^k = \frac{\textup{obj}_P-\textup{obj}_D}{1+|\textup{obj}_P|+|\textup{obj}_D|},
\]
where $\textrm{obj}_P := \frac{1}{2}\inprod{x^k}{ Qx^k} + \inprod{c}{x^k}$ and
$\textrm{obj}_D := -\delta_{\cC}^*(-z^k) - \frac{1}{2}\inprod{w^k}{Q w^k} + \inprod{b}{y^k}$.  For any given tolerance ${\tt tol} \geq 0$, we terminate the algorithm if the following condition holds:
\[
	\eta^k:=\max\{ \eta_p^k, \eta_d^k, \eta_Q^k, \eta_C^k, |\eta_g^k| \} \leq {\tt tol}.	
\]
}

{
\subsection{Benchmark} \label{subsection-benchmark}
We compare our Algorithm QPPAL ({implemented in MATLAB, version R2021a}) with the state-of-the-art solvers\footnote{{These solvers are implemented in C/C++, and we call their MATLAB interfaces in our numerical tests.}}, Gurobi, OSQP \cite{OSQP} and QPALM \cite{hermans2019qpalm} for solving various classes of QP problems \eqref{eq-qp} (see Section~\ref{subsec-QP-problems}) whose matrix representations for $ Q $ are available since the latter three solvers all {explicitly require $Q$ to be a matrix}. Moreover, since we use sGS-isPALM as our Phase I algorithm, we also present the numerical results obtained by running sGS-isPALM alone for the purpose of demonstrating the power and importance of our two-phase framework for solving difficult QP problems.
	
All our numerical experiments are conducted on a Linux machine (note that QPALM is only available for Unix machines) with Intel Xeon E5-2650 processors. The following table describes the settings for the termination conditions of each solver in our benchmark. For detailed {descriptions on the meaning of the} settings in Table {tab-settings} for Gurobi, OSQP and QPALM, we refer readers to their documentations.
	
\begin{footnotesize}
\begin{table}[h!]
\centering
\caption{Settings for each solver.}
\label{tab-settings}
\resizebox{0.9\textwidth}{!}{%
\begin{tabular}{|c|c|}
\hline
Solvers    & Settings                                                                                                                                             \\ \hline
GUROBI     & {\tt FeasibilityTol} = {\tt OptimalityTol} = {\tt 1e-6}, \; {\tt TimeLimit} = 10800                                                                  \\ \hline
OSQP       & {\tt eps\_abs} = {\tt eps\_rel} = {\tt eps\_prim\_inf} = {\tt eps\_dual\_inf} = {\tt 1e-6}, \; {\tt max\_iter} = 10000                \\ \hline
QPALM      & {\tt delta} = 100, \; {\tt proximal} = true, \; {\tt scaling} = 2, \; {\tt eps\_abs} =  {\tt eps\_rel} = {\tt 1e-6},\; {\tt max\_iter} = 1000 \\ \hline
sGS-isPALM & {\tt tol} = {\tt 1e-6},\; {\tt MaxIter} = 10000                                                                                                      \\ \hline
QPPAL      & ${\tt tol}_1 = {\tt 1e-4},\; {\tt MaxIter}_1 = 10 \textrm{ or } 1000,\; {\tt tol}_2 = {\tt 1e-6},\; {\tt MaxIter}_2 = 1000$                          \\ \hline
\end{tabular}%
}
\end{table}
\end{footnotesize}
	
Notice that each solver uses its own stopping criteria, so we decide to evaluate the accuracy of the computed solutions using the relative KKT residuals presented previously. We next explain how we conduct our comparisons. Note that all the solvers
{generate primal and dual solutions} of the QP problems. \blue{Thus we} can readily extract the solutions $(x^k, z^k, w^k, y^k)\in \Re^n\times \Re^n \times \Re^n \times \Re^m$ from their outputs. Specifically, for Gurobi, OSQP and QPALM, we always take $w^k = x^k$. Then, we can compute their corresponding relative KKT residuals and objective values, i.e. $\eta_p^k$, $\eta_d^k$, $\eta_Q^k$, $\eta_C^k$, $\eta_g^k$, ${\tt obj}_P$ and ${\tt obj}_D$, respectively.
It is clear that for Gurobi, OSQP and QPALM, it always holds that $\eta_Q^k = 0$.

To compare the robustness of each solver, we
{give our definition of ``failure''} in terms of the KKT residuals, primal and dual objective values. In particular, we say that a solver fails in terms of primal feasibility if it returns a solution with $\eta_p^k > 5\times 10^{-6}$. Similar definition applies to $\eta_d^k$, $\eta_Q^k$, and $\eta_C^k$. For the comparison of objective function values, a solver with ${\tt obj}_P$ and ${\tt obj}_D$ is considered as a failure in terms of primal and dual objective values if
\[
	{\tt err}_p:=\frac{|{\tt obj}_{\rm P} - {\tt obj}_{\rm P, best}|}{1 + | {\tt obj}_{\rm P, best}|} > 5\times 10^{-5},\quad \textrm{and} \quad {\tt err}_d:=\frac{|{\tt obj}_{\rm D} - {\tt obj}_{\rm D, best}|}{1 + | {\tt obj}_{\rm D, best}|} > 5\times 10^{-5},
\]
respectively, where ${\tt obj}_{\rm P, best}$, ${\tt obj}_{\rm D, best}$ being the primal and dual objective values of the solver having the smallest KKT residual $\eta^k$. We then compute the total number of problems that are marked as {failures based on the above definition} for four problem classes in the next subsection.
{
Obviously a solver with a smaller number of failures is considered to be more robust.}

{For comparing the efficiency in terms of computation times}, we adopt the Dolan-Mor\'{e} performance profile \cite{dolan2002benchmarking}~\footnote{\blue{We use the script provided by \url{https://www.mcs.anl.gov/~more/cops/} to create performance profiles.}}. In particular, \blue{suppose that} we are benchmarking $S$ solvers on $P$ problems.  \blue{Let  $t_{i,j}$ denote} the time required by solver $i$ to solve problem $j$. \blue{Then the performance ratio} of solver $i$ for problem $j$ is defined as
	\[
		r_{i,j} = t_{i,j} / \min_{i}\;\{ t_{i,j}:1\leq i\leq S\}, \quad 1\leq i\leq S, \; 1\leq j\leq P.
	\]
	In the performance profile, one plots the following functions for each solver $i$:
	\[
		f_s(\tau):={\frac{1}{P} } \sum_{1\leq j \leq P} f_{\tau}(r_{i,j}),\quad 1\leq i\leq S,\quad \forall \tau\in \Re_{++},
	\]
	where the function $f_{\tau}(\cdot) : \Re \rightarrow [0, 1]$ is defined as
	\[
		f_{\tau}(r):= \left\{
			\begin{array}{ll}
				1 & \textrm{if } r \leq \tau, \\
				0 & \textrm{otherwise,}
			\end{array}
		\right.\quad \forall r\in \Re.
	\]
	It is not difficult to observe that the value of $f_s(\tau)$ represents the fraction of problems which are solved within $\tau$ times of the best solver. Hence, the higher the plotted curve is, the better the performance of the corresponding solver. \blue{Finally, note that the performance profile is meaningful only if all the solvers are able to solve the problems successfully. Hence, failures on a given problem (in the sense that $\eta^k_{kkt}:= \max\left\{\eta_p^k, \eta_d^k, \eta_Q^k, \eta_C^k\right\} > 5\times 10^{-6}$) are represented by ``{\tt NaN}'' (i.e., ``Not a Number'') when
	{we prepare the} performance profiles.}
}

\blue{As argued in \cite{gould2016note}, the performance profiles plots may cause issues when more than two algorithms are compared at the same time. For this reason and for compactness, we also provide the shifted geometric mean (SGM) of the computational time. In particular, for each problem set {consisting of $P$} testing instances, we compute
\[
	\bar t_i = \left(\prod_{1\leq j\leq P}(t_{i,j} + \zeta)\right)^{\frac{1}{P}} - \zeta = \exp\left(\frac{1}{P}\sum_{1\leq j\leq P} \ln(t_{i,j} + \zeta)\right) - \zeta,\quad 1\leq i \leq S.
\]
In this paper, a shift of $\zeta = 1$ is applied. Note that a smaller value of the SGM indicates a better performance.

}

\subsection{Some classes of QP problems}\label{subsec-QP-problems}
We next list {four} classes of QP problems arising from different scenarios with some brief descriptions.

\begin{example}[{\bf QPs arising from relaxations of QAP problems}]
	\label{example-1}
	Given matrices $A,B\in \cS^d$, the quadratic assignment problem (QAP) is given by
	$$
	\min_{{X\in \Re^{d\times d}}}\{ \inprod{\vec(X)}{(B\otimes A)\vec(X)} \mid Xe = e = X^Te, X \geq 0, X \in\{0,1\}^{d\times d}\},
	$$
	where $\otimes$ denotes the Kronecker product, $\vec(X)$ is the vectorization of the matrix $X$, i.e.,
	\[
	    \vec(X) = [x_{1,1},\ldots,x_{d,1},x_{1,2},\ldots,x_{d,2},\ldots,x_{1,d},\ldots,x_{d,d}]^{T}{\in \Re^{d^2}}.
	\]
	It has been shown in \cite{QPbound01} that a reasonably good lower bound for the above QAP can often be obtained by solving the following convex QP relaxation:
	\begin{eqnarray}
		\min_{{\vec(X)\in \Re^{d^2}}} \{ \inprod{\vec(X)}{ Q\vec(X)} \mid (e^T \otimes I )\vec(X) = e = (I\otimes e^T)\vec(X), \vec(X) \geq 0 \},
		\label{eq-QAP-QP}
	\end{eqnarray}
	where $ Q = B\otimes A - I\otimes S - T\otimes I$, and $S,T\in\cS^d$ are given as follows. Consider the eigenvalue decompositions, $A = V_A D_A V_A^T$, $B=V_B D_B V_B^T$, where $V_A$ and $D_A=\diag{(\alpha_1,\dots,\alpha_d)}$ correspond to the eigenvectors and eigenvalues of $A$, and $V_B$ and $D_B = \diag{(\beta_1,\dots,\beta_d)}$ correspond to the eigenvectors and eigenvalues of $B$, respectively. We assume that $\alpha_1 \geq \ldots \geq \alpha_d$ and $\beta_1 \leq \ldots \leq \beta_d$. Let $(\bar{s},\bar{t})$ be an optimal solution to the LP: $\max \{ e^T s + e^T t\mid s_i + t_j \leq \alpha_i\beta_j,\; i,j=1,\dots,d\}$, whose solution can be computed analytically as shown in \cite{QPbound01}. Then $S = V_A\diag{(\bar{s})}V_A^T$ and $T=V_B \diag{(\bar{t})} V_B^T$. The data for the QAPs are obtained from {\tt QAPLIB} \cite{QAPLIB}.
\end{example}

{
\begin{example}[{\bf QP relaxations of quartic optimization with binary constraints}]
	\label{example-2}
	Consider the following binary constrained quartic optimization problem
	\begin{equation*}
		\min_{x\in \mathbb{R}^d}\left\{ \left(\left\langle x, Qx \right \rangle\right)^2\,\mid \, x\in \{0,1\}^{d},\; \norm{x}_0 \geq \beta\right\},
	\end{equation*}
	where $Q\in \cS^{d}$, and $\beta >0$ is a given constant. The presence of the cardinality constraint $\norm{x}_0 \geq \beta$ is to avoid the trivial case when $x =0$ is an optimal solution. Let $\lambda_{max}(Q)$ and $\lambda_{min}(Q)$ denote the maximal and minimal eigenvalue of $Q$, respectively. It holds that the minimal eigenvalue of $Q\otimes Q$ is given as follows:
	\[
		\lambda_{min}(Q\otimes Q) = \left\{
				\begin{array}{ll}
					\lambda_{min}(Q)^2 & \textrm{ if } \lambda_{min}(Q) \geq 0, \\
					\lambda_{max}(Q)^2 & \textrm{ if } \lambda_{max}(Q) \leq 0, \\
					\lambda_{min}(Q)\cdot \lambda_{max}(Q) & \textrm{ otherwise}. \\
				\end{array}
		\right.
	\]
	Notice that the $ \left(\left\langle x, Qx \right \rangle\right)^2 =  \left\langle \vec(xx^T), (Q\otimes Q) \vec(xx^T) \right \rangle $. Then, a direct QP relaxation for the above optimization problem is given as follows:
	\begin{equation*}
		\min_{X\in \cS^{d}, x\in \Re^{d}}\left\{ \left\langle \svec(X), \tilde{Q} \svec(X) \right \rangle + \lambda_0\left\langle e_d, x\right\rangle \,\mid\, (X, x)\in \mathcal{F}_{BIQ} \right\},
	\end{equation*}
	where $\lambda_0 := \min\{0, \lambda_{min}(Q\otimes Q)\}$,
	{$e_d\in\Re^d$ is the vector of all ones},
	$\tilde{Q} \in \cS_+^{d(d+1)/2}$ satisfies
	\[
		\left\langle \vec(X), \left[(Q\otimes Q) - \lambda_0 I_{d^2} \right] \vec(X)\right\rangle = \left\langle \svec(X), \tilde{Q} \svec(X)\right\rangle,
		\]
	for any $X\in \cS^d$, and
	\begin{equation*}
		\mathcal{F}_{BIQ}:= \left\{ (X, x)\in \cS^{d}\times \Re^d\left\lvert \;
		\begin{aligned}
			&\mathrm{diag}(X) = x,\; \left\langle e_d, x\right\rangle \geq \beta,\;-X_{ij} + x_j\geq 0,\; -X_{ij} + x_i\geq 0,\\
			&X_{ij} - x_i - x_j\geq -1, \;1\leq i<  j\leq d,\; X\geq 0,\; x\geq 0
		\end{aligned}\right.
			\right\}.
	\end{equation*}
	{Note that in the above, $\svec(X)$ denotes the vector obtained by stacking
	the upper triangular part of $X$, column by column sequentially. Moreover, the off-diagonal
	elements of $X$ are scaled by $\sqrt{2}$.}
	
	In our numerical experiments, the tested matrices $Q$ are selected from the BIQMAC library \cite{BIQMAC} and we take $\beta = d/5$.
\end{example}
}

\begin{example}[{\bf QPs selected {from} Maros-M\'{e}sz\'{a}ros collection}]
	\label{example-3}
	In this example, we compare the performance of all the solvers on the QP instances that are selected from the Maros-M\'{e}sz\'{a}ros collections \cite{Maros1999}. The QP problems from this collection are often used to benchmark QP solvers since this collection contains many large-scale and very difficult (ill-conditioned) QP problems. Thus, they are quite challenging to solve. {Since we focus on solving large-scale QP problems in this paper, we only select those problems such that the sum of linear equality and inequality constraints and the number of decision variables is at least 1000.}
\end{example}

\begin{example}[{\bf QPs arsing from portfolio optimization}]
	\label{example-4}
	Portfolio optimization \cite{Perold1984,Takehara1993} employed by the investment community seeks to allocate asserts in a way that optimizes the risk adjusted return. In this example, we consider a simplified version of portfolio optimization which is in fact a convex QP given as follows:
	\begin{equation*}
		\min_{{x\in \Re^n}} \; \left\{\gamma \left\langle x, \Sigma x\right \rangle -
		\left \langle \mu, x\right\rangle \mid \left \langle e_n, x\right\rangle = 1, \; x\geq 0\right\},
	\end{equation*}
	where $x\in \Re^n$ is the decision variable, and the data matrix $\Sigma \in \cS^n$ is symmetric positive semidefinite, $\mu \in \Re^n$, $\gamma >0$ and $e_n \in \Re^n$ is the vector of all ones. We generate our test data randomly via the following {\sc Matlab} script as follows:
	\begin{verbatim}
	    n = 1000*k; m = 10*k;
	    F = sprandn(n, m, 0.1); D = sparse(diag(sqrt(m)*rand(n,1)));
	    Sigma = cov(F') + D;
	    mu = randn(n,1); gamma = 1.0;
	\end{verbatim}
\end{example}

\subsection{Numerical results} \label{subsec-numercial-results}
{
\blue{We are now ready to present our numerical results by showing the performance profiles, the failure rates and the shifted geometric mean (SGM) of the five solvers on the four previous examples. For the rest of this section, the solvers ``a'', ``b'', ``c'', ``d'' and ``e'' represent GUROBI, OSQP, QPALM, sGS-isPALM and QPPAL, respectively.} The detailed computational results are {available} in the supplementary material.

Fig. \ref{fig:comparison-QAP} and Fig. \ref{fig:comparison-BIQ} present the computational results for Example \ref{example-1} and Example \ref{example-2}, respectively. We observe from the performance profiles that our proposed algorithms are indeed much more efficient than the existing state-of-the-art solvers. In particular, QPPAL is shown to be substantially more efficient than other solvers. While OSQP performs better than Gurobi on Example \ref{example-2}, the former is slightly less efficient than the latter on Example \ref{example-1}. For both examples, QPALM turns out to be less efficient than the {other} solvers. This observation supports our claim that for solving large-scale dense QP problems, {applying the proximal ALM method to the {(restricted-Wolfe)} dual problem {\bf (D)} is more effective.} For the comparison on the failure numbers of each solver, we see that QPPAL and sGS-isPALM {are able to solve all the instances successfully
for both examples, whereas OSQP and QPALM can solve all the instances in Example \ref{example-2} but fail on quite a few instances} in Example \ref{example-1}. Gurobi is also quite robust as it can solve all the instances in both examples except
{for three instances with} bad $\eta_C$ in Example \ref{example-1}.
Overall, we see that QPPAL and sGS-isPALM are quite efficient and robust for solving large-scale and dense QP problems.
}

\begin{figure*}[ht]
	\centering
	\begin{minipage}{0.48\linewidth}
		\includegraphics[width=\linewidth]{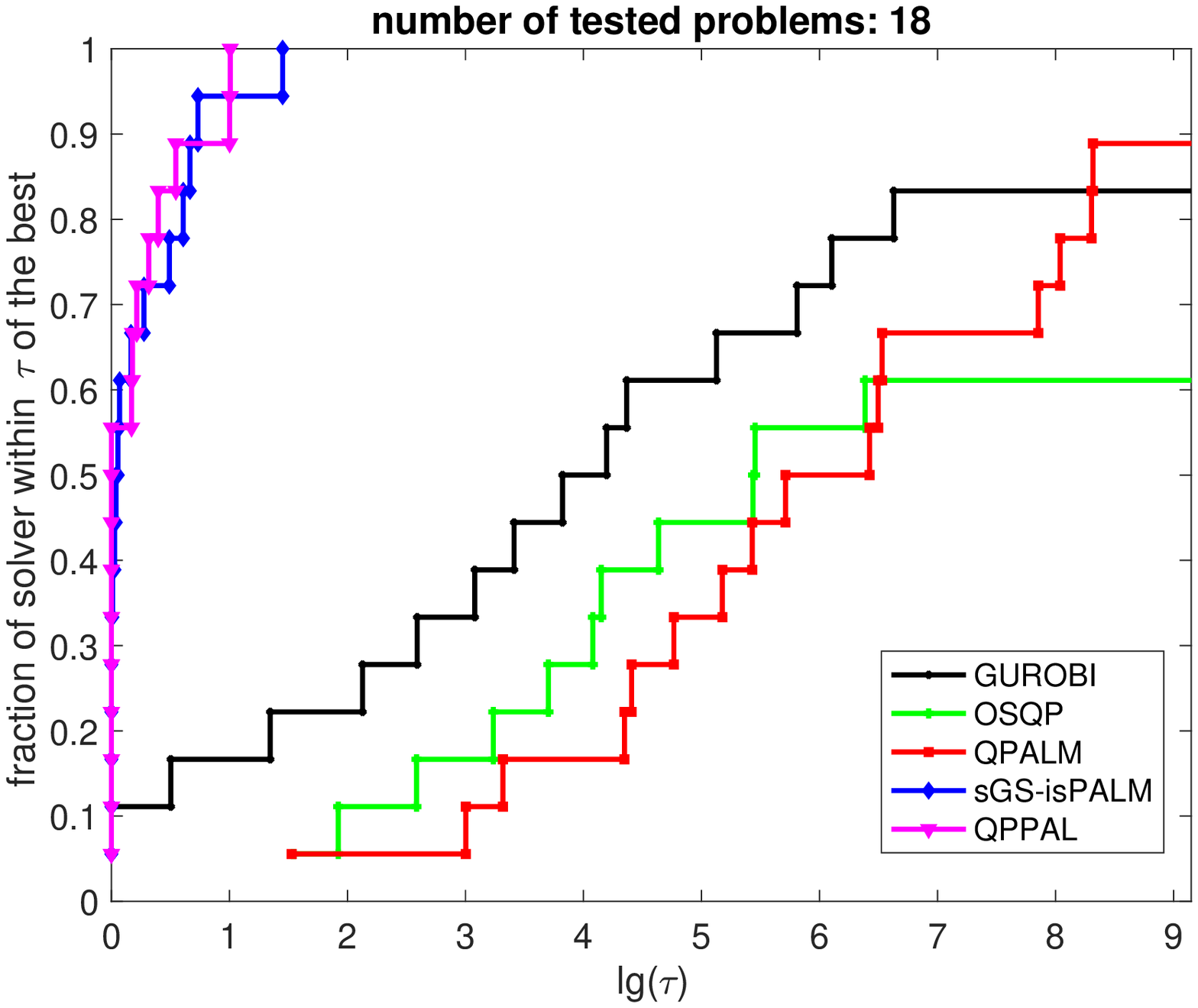}
	\end{minipage}
	\hfill
	\begin{minipage}{0.48\linewidth}
	\begin{tabular}{|l|l|l|l|l|l|}
		\hline
		Solver & a & b & c & d & e \\ \hline
		$\eta_p$ & 0 & 0 & 0 & 0 & 0 \\ \hline
		$\eta_d$ & 0 & 5 & 2 & 0 & 0 \\ \hline
		$\eta_Q$ & 0 & 0 & 0 & 0 & 0 \\ \hline
		$\eta_C$ & 3 & 2 & 1 & 0 & 0 \\ \hline
		${\tt obj}_p$ & 0 & 0 & 0 & 0 & 0 \\ \hline
		${\tt obj}_d$ & 0 & 0 & 0 & 0 & 0 \\ \hline
		$\eta_{kkt}$ & 3 & 7 & 2 & 0 & 0 \\ \hline
		{\tt Failure rate [\%]} & 16.67 & 38.89 & 11.11 & 0.00 & 0.00 \\ \hline
		{\tt SGM [sec]} & 38.09 & 59.75 & 203.20 & 4.45 &  4.32 \\ \hline
	\end{tabular}
%
	\end{minipage}
	\caption{Performance comparison for Example \ref{example-1}. {\bf Left}: Performance profile. {\bf Right}: Statistics for each solver.}
	\label{fig:comparison-QAP}
\end{figure*}

\begin{figure*}[ht]
	\centering
	\begin{minipage}{0.48\linewidth}
		\includegraphics[width=\linewidth]{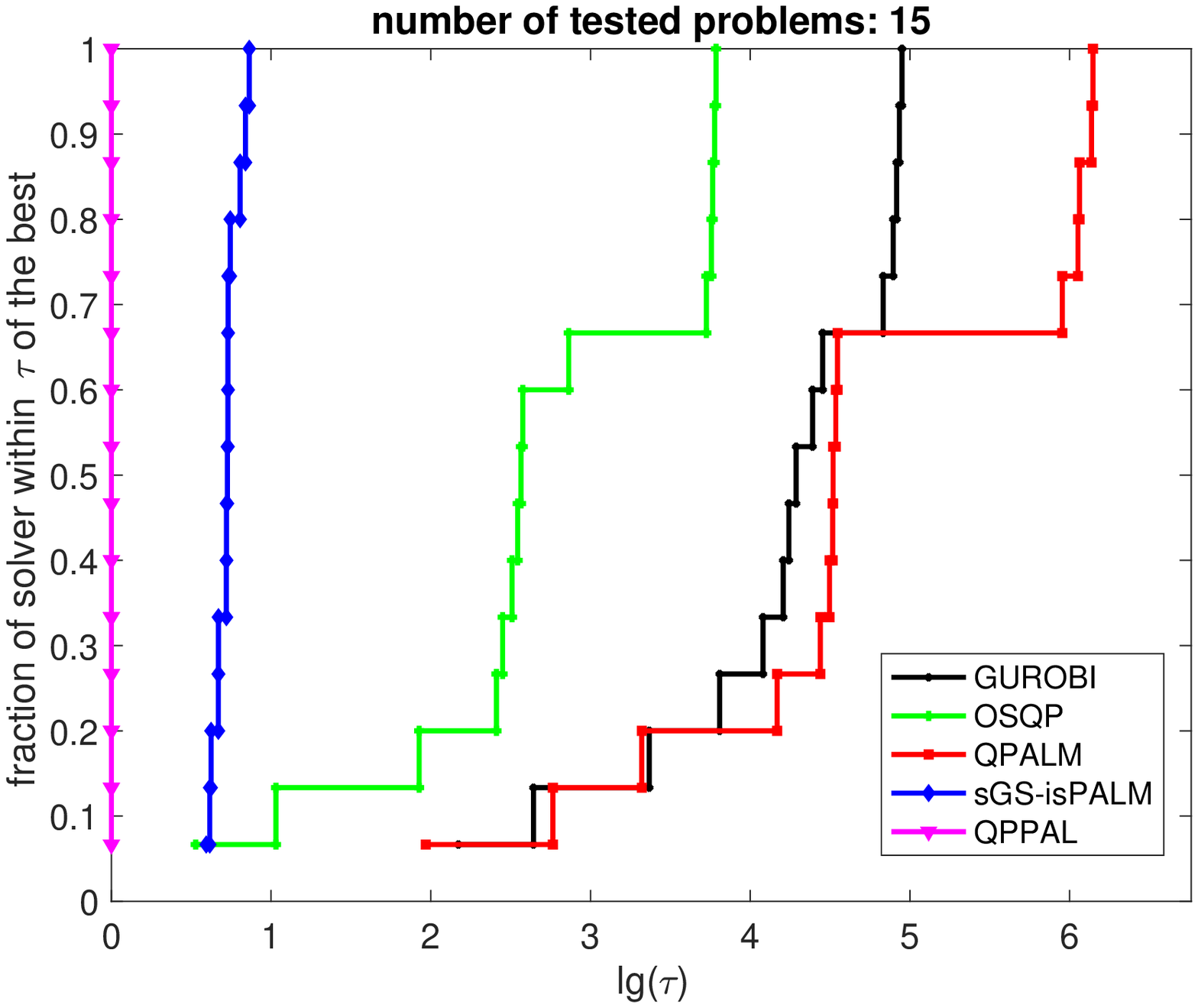}
	\end{minipage}
	\hfill
	\begin{minipage}{0.48\linewidth}
	\begin{tabular}{|l|l|l|l|l|l|}
		\hline
		Solver & a & b & c & d & e \\ \hline
		$\eta_p$ & 0 & 0 & 0 & 0 & 0 \\ \hline
		$\eta_d$ & 0 & 0 & 0 & 0 & 0 \\ \hline
		$\eta_Q$ & 0 & 0 & 0 & 0 & 0 \\ \hline
		$\eta_C$ & 0 & 0 & 0 & 0 & 0 \\ \hline
		${\tt obj}_p$ & 0 & 0 & 0 & 0 & 0 \\ \hline
		${\tt obj}_d$ & 0 & 0 & 0 & 0 & 0 \\ \hline
		$\eta_{kkt}$ & 0 & 0 & 0 & 0 & 0 \\ \hline
		{\tt Failure rate [\%]} & 0.00 & 0.00 & 0.00 & 0.00 & 0.00 \\ \hline
		{\tt SGM [sec]} & 127.30 & 49.70 & 179.87 & 11.91 &  7.27 \\ \hline
	\end{tabular}
	\end{minipage}
	\caption{Performance comparison for Example \ref{example-2}. {\bf Left}: Performance profile. {\bf Right}: Statistics for each solver.}
	\label{fig:comparison-BIQ}
\end{figure*}

{
Fig. \ref{fig:comparison-Maros} shows the comparison results for solving QPs in Example \ref{example-3}. Recall that these QP problems are highly sparse and are quite challenging to solve {because of degeneracy such as the nonexistence of a strictly feasible point
for either the primal or dual problem}. It is commonly accepted that interior point methods would be both efficient and robust {for solving degenerate problems}. The numerical results for Gurobi that we have presented support this argument.
}

\begin{figure*}[ht]
	\centering
	\begin{minipage}{0.48\linewidth}
		\includegraphics[width=\linewidth]{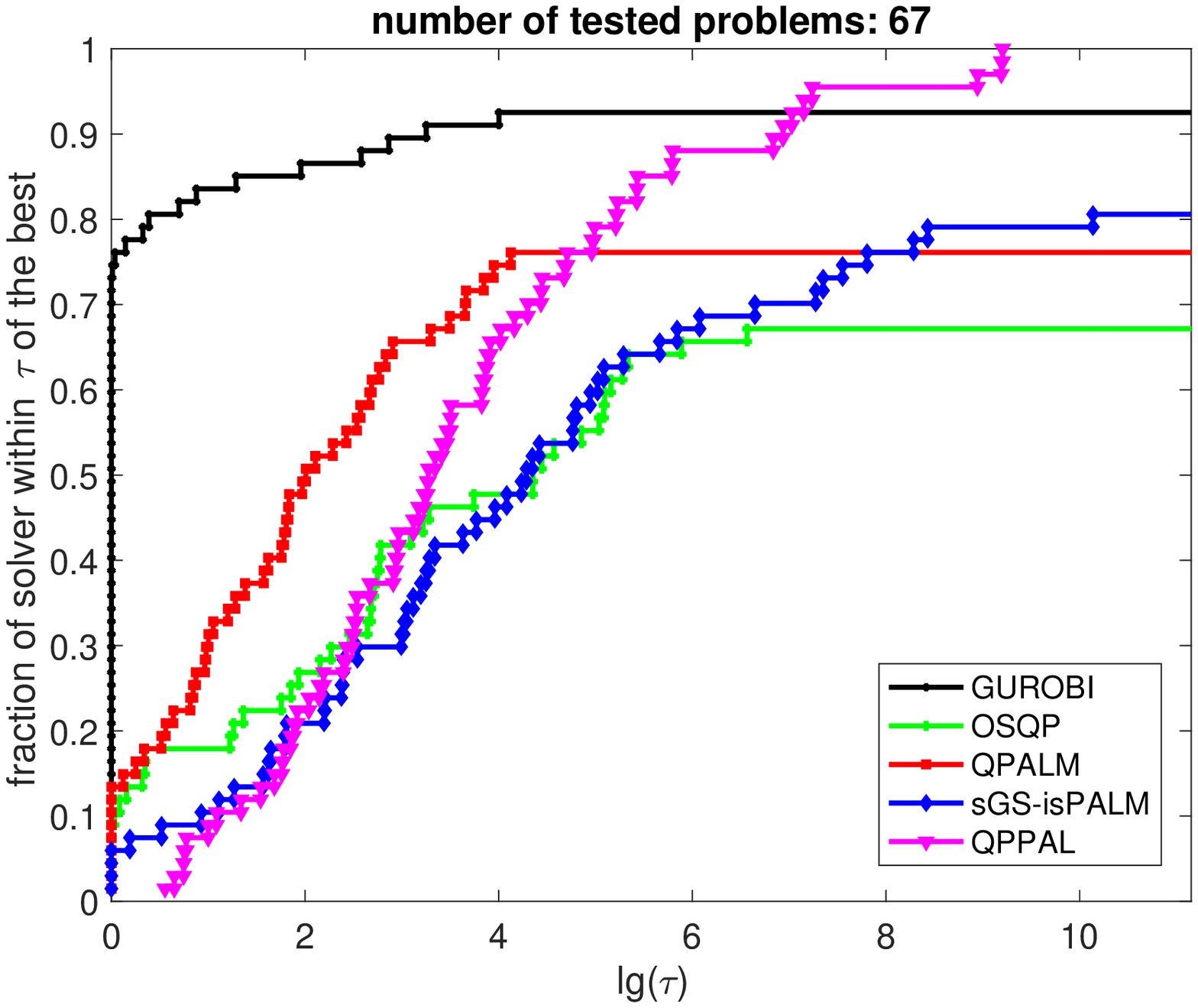}
	\end{minipage}
	\hfill
	\begin{minipage}{0.48\linewidth}
	\begin{tabular}{|l|l|l|l|l|l|}
		\hline
		Solver & a & b & c & d & e \\ \hline
		$\eta_p$ & 0 & 7 & 10 & 2 & 0 \\ \hline
		$\eta_d$ & 0 & 20 & 7 & 13 & 0 \\ \hline
		$\eta_Q$ & 0 & 0 & 1 & 10 & 0 \\ \hline
		$\eta_C$ & 5 & 12 & 4 & 9 & 0 \\ \hline
		${\tt obj}_p$ & 0 & 24 & 16 & 24 & 14 \\ \hline
		${\tt obj}_d$ & 0 & 24 & 16 & 25 & 14 \\ \hline
		$\eta_{kkt}$ & 5 & 22 & 16 & 13 & 0 \\ \hline
		{\tt Failure rate [\%]} & 7.46 & 32.84 & 23.88 & 19.40 & 0.00 \\ \hline
		{\tt SGM [sec]} & 0.38 & 2.38 & 1.37 & 4.40 &  2.97 \\ \hline
	\end{tabular}
	\end{minipage}
	\caption{Performance comparison for Example \ref{example-3}. {\bf Left}: Performance profile. {\bf Right}: Statistics for each solver.}
	\label{fig:comparison-Maros}
\end{figure*}

{
Note that though the focus of this paper is to design scalable and robust algorithms for solving large-scale QP problems having some or all the three characteristics mentioned in Section \ref{intro}, our purpose of presenting the results for those QP problems in the Maros-M\'{e}sz\'{a}ros collection is to evaluate the robustness of our proposed algorithm. Indeed the presented results show that our proposed QPPAL is robust even for difficult sparse QPs. More specifically, the results show the need of our Phase II algorithm since one can easily observe that there are numerous problems in the collection for which the first-order algorithms OSQP and sGS-isPALM cannot deliver accurate approximate solutions. However, our two phase algorithm QPPAL is able to obtain solutions with the desired accuracy. As a second-order algorithm,
{QPALM is also demonstrated to be more robust than OSQP and sGS-isPALM.
However, it is} less robust than Gurobi and QPPAL. For the comparison between OSQP and sGS-isPALM, we observe that both algorithms have similar performance in terms of robustness.
}

{
For the comparison on efficiency, it is clear that Gurobi is the most powerful solver that outperforms all the other solvers in terms of the computational time on the successful instances. Moreover, QPALM is generally more efficient than OSQP and our solvers in this collection. We also observe that the numerical results of Gurobi, OSQP and QPPAL in this paper coincides the numerical results presented in \cite[Section 7.3]{hermans2020qpalm}. In addition, given the fact that QPALM and OSQP are less robust than Gurobi, it may take more computational effort for QPALM and OSQP to compute more accurate solutions. Therefore, we would recommend Gurobi for solving sparse and difficult QP problems if one wants to obtain highly accurate solutions efficiently. On the other hand, QPPAL is less efficient than QPALM but it has comparable performance as OSQP and it is more efficient than sGS-isPALM. The latter observation together with the comparison of robustness between QPPAL and sGS-isPALM again shows that the second phase of QPPAL is indeed necessary and further supports our motivation to design a two-phase algorithm. For the comparison between OSQP and sGS-isPALM, one sees that OSQP usually requires less computational time, especially for the small-scale problems.
}

{
Fig. \ref{fig:comparison-Portfolio} presents the computational results for Example \ref{example-4}. From the presented results, we can see that QPPAL outperforms the other solvers in terms of the computational time. In fact, QPPAL is at least ten times faster than the other solvers when the problem size is large, say when the dimension of the decision variable is larger than 2000.
One can also observe that both OSQP and QPALM require much longer computational times and the objective values returned by OSQP and QPALM are obviously worse than Gurobi, sGS-isPALM and QPPAL. One possible reason is that the matrix $Q$ in these problems are fully dense and applying a direct solver for solving {linear systems} involving $Q$ may not be a wise choice. However, OSQP and QPALM currently only support direct solvers.
}

\begin{figure*}[ht]
	\centering
	\begin{minipage}{0.48\linewidth}
		\includegraphics[width=\linewidth]{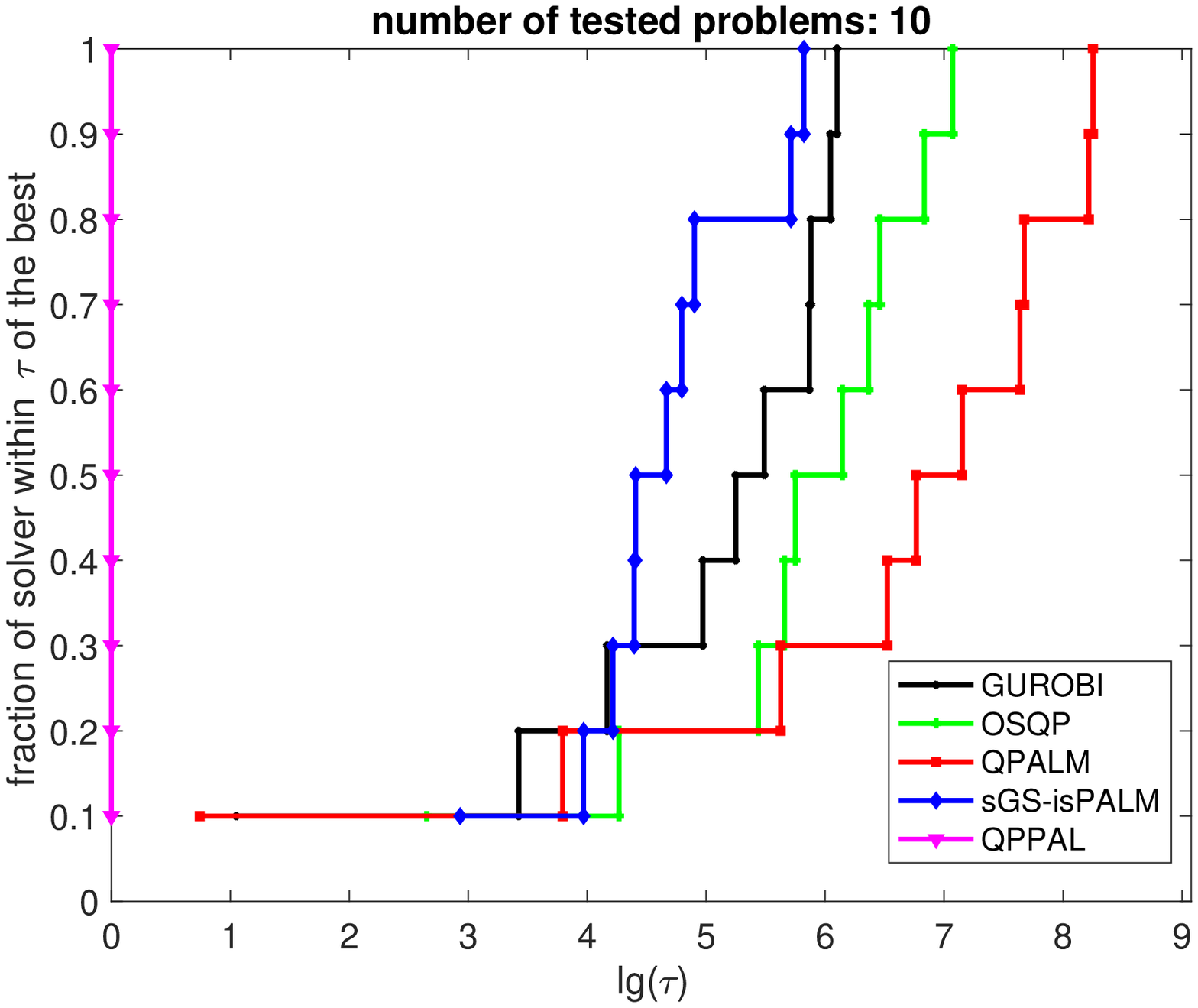}
	\end{minipage}
	\hfill
	\begin{minipage}{0.48\linewidth}
	\begin{tabular}{|l|l|l|l|l|l|}
		\hline
		Solver & a & b & c & d & e \\ \hline
		$\eta_p$ & 0 & 0 & 0 & 0 & 0 \\ \hline
		$\eta_d$ & 0 & 0 & 0 & 0 & 0 \\ \hline
		$\eta_Q$ & 0 & 0 & 0 & 0 & 0 \\ \hline
		$\eta_C$ & 0 & 0 & 0 & 0 & 0 \\ \hline
		${\tt obj}_p$ & 0 & 10 & 10 & 0 & 0 \\ \hline
		${\tt obj}_d$ & 0 & 10 & 10 & 0 & 0 \\ \hline
		$\eta_{kkt}$ & 0 & 0 & 0 & 0 & 0 \\ \hline
		{\tt Failure rate [\%]} & 0.00 & 0.00 & 0.00 & 0.00 & 0.00 \\ \hline
		{\tt SGM [sec]} & 80.15 & 132.96 & 216.48 & 61.93 & 3.12 \\ \hline
	\end{tabular}
	\end{minipage}
	\caption{Performance comparison for Example \ref{example-4}. {\bf Left}: Performance profile. {\bf Right}: Statistics for each solver.}
	\label{fig:comparison-Portfolio}
\end{figure*}

{
We finish this subsection with some final comments on the computational results. Obviously, Gurobi and QPALM have the best performance for convex QP problems whose matrices {$Q$ and $A$} are highly sparse and with special structures. However,
QPALM is demonstrated to be less robust than Gurobi. On the contrary, for dense and large-scale problems, Gurobi and QPALM may no longer be a good option since they require too much computational effort. As ADMM-type algorithms, both OSQP and sGS-isPALM are highly efficient for well-conditioned problems such as convex QP relaxations of QAP and
{quartic polynomial minimization} problems. However, our numerical results show that sGS-isPALM is more likely to outperform OSQP in terms of efficiency and accuracy for large-scale QP problems. On the other hand, for small-scale QP problems, sGS-isPALM is demonstrated to have comparable performance as OSQP in terms of robustness. Finally, the computational results for those difficult problems that sGS-isPALM cannot solve efficiently indicate that the design of our two-phase algorithm {QPPAL} is indeed useful and important. Overall, the above numerical results demonstrate that the proposed QPPAL is highly {efficient} and robust for solving large-scale and dense convex QP problems.
}

\subsection{Computational results on matrices \texorpdfstring{$Q$}{Q} without matrix representations}

In this subsection, we consider QP problems arising in Section~\ref{subsec-QP-problems} for which the matrix representations for $Q$ may not be available. The test problems are selected from QP relaxations for QAP problems with $ d \geq 100 $ and BIQ problems with $ n\geq 150 $. For these problems, the matrices $ Q $ are usually fully dense. Moreover, even if the matrix representations for $ Q $ are available, storing them would require a large amount of memory. Thus, we can only use iterative solvers (such as PSQMR) to solve the underlying linear systems, and Gurobi, QPALM and OSQP are not able to cope with these large scale QPs since they currently only support direct solvers for solving linear systems.

{
The computational results are presented in Table \ref{tab-noQ}. In the table, the three integers below the name of the problem represent the sizes of the underlying QP problem. For instance, $(200, 0, 10000)$ under the first column indicates that the QP problem has $200$ linear equality constraints, no linear inequality constraint and 10,000 decision variables. The columns {\tt Iter} and {\tt Time} represent the iteration number and computational time, respectively. In particular, $5(280)$ means that QPPAL takes 5 phase II iterations with 280 phase I iterations for warm-starting. The meanings of the remaining columns in the table are clear from the context {and we} omit the description for simplicity.
}

From the {Table \ref{tab-noQ}}, we observe that for very large-scale and dense QP problems, our QPPAL is still able to solve them efficiently and robustly.
{In particular, for the last instance in the table, QPPAL is able to solve
the QP with more than 374 thousand constraints and 125 thousand variables in under 5 minutes.}
This shows that our proposed algorithm is indeed scalable, robust and highly efficient for convex QP problems having the three characteristics mentioned in Section \ref{intro}. {Moreover, QPPAL is empirically more efficient than sGS-isPALM}. This again supports our motivation of developing QPPAL in this paper.

\begin{longtable}[c]{|l|l|l|r|l|l|l|l|r|r|r|}
\caption{Numerical results for QP problems in Example \ref{example-1} and \ref{example-2} having no matrix representation of $Q$.}
\label{tab-noQ}\\ \hline
Name & Solver & Iter & Time & $\eta_p$ & $\eta_d$ & $\eta_Q$ & $\eta_C$ & ${\tt obj}_P$ & ${\tt obj}_D$ & $\eta_g$ \\ \hline
\endfirsthead
\multicolumn{11}{c}
{{\bfseries Table \thetable\ continued from previous page}} \\ \hline
Name &  & Iter & Time & $\eta_p$ & $\eta_d$ & $\eta_Q$ & $\eta_C$ & ${\tt obj}_P$ & ${\tt obj}_D$ & $\eta_g$ \\ \hline
\endhead
\hline
\endfoot
tai100a & sGS-isPALM & 567 & 2.577 & 7e-13 & 1e-06 & 3e-07 & 7e-07 & 4.5239e-01 & 4.5239e-01 & -1e-07 \\
(200,0,10000) & QPPAL & 8(278) & 2.122 & 6e-16 & 7e-07 & 3e-08 & 6e-08 & 4.5239e-01 & 4.5239e-01 & -1e-07 \\ \hline
tai100b & sGS-isPALM & 1870 & 16.619 & 6e-13 & 1e-06 & 2e-07 & 9e-07 & 2.7470e-01 & 2.7470e-01 & -3e-07 \\
(200,0,10000) & QPPAL & 11(237) & 5.727 & 6e-16 & 8e-07 & 4e-08 & 8e-08 & 2.7470e-01 & 2.7470e-01 & -3e-07 \\ \hline
tai256c & sGS-isPALM & 280 & 4.096 & 3e-07 & 1e-06 & 8e-08 & 2e-08 & 8.4942e-02 & 8.4942e-02 & 6e-08 \\
(512,0,65536) & QPPAL & 8(185) & 5.139 & 6e-16 & 3e-07 & 2e-09 & 7e-12 & 8.4942e-02 & 8.4942e-02 & 4e-10 \\ \hline
tai150b & sGS-isPALM & 2183 & 44.252 & 6e-13 & 9e-07 & 2e-07 & 9e-07 & 2.7753e-01 & 2.7753e-01 & -3e-07 \\
(300,0,22500) & QPPAL & 19(307) & 23.315 & 7e-16 & 8e-07 & 2e-09 & 2e-10 & 2.7753e-01 & 2.7753e-01 & -2e-07 \\ \hline
tho150 & sGS-isPALM & 2316 & 44.249 & 6e-13 & 1e-06 & 2e-07 & 7e-07 & 3.7766e-01 & 3.7766e-01 & -2e-07 \\
(300,0,22500) & QPPAL & 21(320) & 24.713 & 7e-16 & 9e-07 & 3e-09 & 5e-10 & 3.7766e-01 & 3.7766e-01 & -2e-07 \\ \hline
wil100 & sGS-isPALM & 2630 & 19.507 & 8e-13 & 9e-07 & 2e-07 & 1e-06 & 5.2368e-01 & 5.2368e-01 & -2e-07 \\
(200,0,10000) & QPPAL & 14(299) & 8.132 & 9e-16 & 7e-07 & 5e-08 & 5e-08 & 5.2368e-01 & 5.2368e-01 & -2e-07 \\ \hline
esc128 & sGS-isPALM & 244 & 1.398 & 1e-06 & 7e-07 & 5e-07 & 1e-08 & 1.2167e-01 & 1.2167e-01 & 3e-08 \\
(256,0,16384) & QPPAL & 3(154) & 1.033 & 4e-16 & 1e-07 & 3e-07 & 5e-11 & 1.2167e-01 & 1.2167e-01 & 2e-11 \\ \hline
be150.3.1 & sGS-isPALM & 266 & 25.502 & 8e-08 & 3e-07 & 1e-06 & 3e-11 & 2.8738e+06 & 2.8738e+06 & -3e-07 \\
(150,33526,11475) & QPPAL & 11(176) & 16.459 & 8e-07 & 8e-07 & 2e-07 & 6e-16 & 2.8738e+06 & 2.8738e+06 & 1e-07 \\ \hline
be150.8.1 & sGS-isPALM & 266 & 41.347 & 8e-08 & 3e-07 & 1e-06 & 1e-11 & 7.2801e+06 & 7.2801e+06 & -3e-07 \\
(150,33526,11475) & QPPAL & 11(176) & 26.238 & 8e-07 & 7e-07 & 3e-07 & 2e-16 & 7.2801e+06 & 7.2801e+06 & 1e-07 \\ \hline
be200.3.1 & sGS-isPALM & 266 & 47.742 & 6e-08 & 3e-07 & 1e-06 & 2e-11 & 5.1907e+06 & 5.1907e+06 & -7e-08 \\
(200,59701,20300) & QPPAL & 14(176) & 32.482 & 6e-07 & 1e-07 & 2e-07 & 7e-16 & 5.1907e+06 & 5.1907e+06 & 5e-08 \\ \hline
be250.1 & sGS-isPALM & 270 & 52.404 & 5e-08 & 4e-07 & 9e-07 & 5e-11 & 3.1315e+06 & 3.1315e+06 & 1e-07 \\
(250,93376,31625) & QPPAL & 16(177) & 38.030 & 1e-07 & 6e-07 & 2e-07 & 2e-15 & 3.1315e+06 & 3.1315e+06 & -8e-07 \\ \hline
bqp250-1 & sGS-isPALM & 270 & 53.029 & 5e-08 & 4e-07 & 1e-06 & 1e-11 & 1.0644e+07 & 1.0644e+07 & 9e-08 \\
(250,93376,31625) & QPPAL & 16(177) & 38.077 & 9e-08 & 6e-07 & 9e-08 & 7e-16 & 1.0644e+07 & 1.0644e+07 & -7e-07 \\ \hline
bqp500-1 & sGS-isPALM & 440 & 488.023 & 3e-09 & 1e-06 & 1e-06 & 6e-12 & 4.0835e+07 & 4.0835e+07 & -3e-07 \\
(500,374251,125750) & QPPAL & 18(195) & 274.971 & 4e-07 & 3e-08 & 2e-07 & 1e-15 & 4.0835e+07 & 4.0835e+07 & -1e-06 \\ \hline
\end{longtable}

\section{Conclusions}\label{sec-conclusion}
In this paper, we have proposed a two-phase proximal augmented Lagrangian method (QPPAL) for solving convex quadratic programming problems. In the first phase of QPPAL, we applied {a} symmetric Gauss-Seidel based semi-proximal augmented Lagrangian method for the purpose of generating a good starting point. In the second phase of QPPAL, a proximal augmented Lagrangian method of multipliers with elegant convergence properties developed recent by Li et al.~\cite{Li2019LP} was applied. To solve the corresponding inner subproblems efficiently, a semismooth Newton method with a fast local convergence rate was adopted. With well-developed theoretical results, we then conducted extensive numerical experiments to evaluate the performance of the proposed algorithm against the highly powerful commercial solver Gurobi, the operator splitting based solver OSQP, {and the primal based augmented Lagrangian method (QPALM)}. Promising numerical results demonstrated that the proposed QPPAL is highly efficient and robust for solving large-scale and dense problems. {Lastly, for further efficiency, we plan to implement our proposed algorithms by using C/C++ in our future work.}

\section*{Acknowledgments}
{We thank the editor and referees for their valuable suggestions and insightful comments, which have helped to improve the quality of this paper.} \blue{We also thank Miss Can Wu at the Hong Kong Polytechnic University for her comments on the revised version of this paper.}

\appendix

\section{Appendix}
\label{sec-appendix}

\subsection{Proof of Proposition \ref{prop:equi-scb-padmm}} \label{subsec-pf-padmm}
Before proving the theorem, we need the following lemma that provides an estimation of the distance between $v^{k+1}$ and $\bar{v}^{k+1}$ in the Algorithm isPALM.
\begin{lemma}\label{lemma-distest}
	Let $\{(v^k,x^k)\}$ be the sequence generated by the Algorithm {isPALM} and $\{\bar v^k\}$ be defined by \eqref{aug-v}. Then,
	\[\norm{v^{k+1}-\bar v^{k+1}}_{\cN} \le \norm{\cN^{-1/2}d^k}\le \varepsilon_k, \quad \forall k\ge 0.\]
\end{lemma}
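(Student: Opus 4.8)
The plan is to compare the first-order optimality conditions of the exact subproblem \eqref{aug-v} and of the inexact step \eqref{aug-d}, and then to exploit the strong monotonicity of $\partial g$ recorded in \eqref{monosub2}. Since $v \mapsto \cL_{\sigma}(v;x^k) + \frac12\norm{v-v^k}^2_{\cT}$ is convex and $\partial_v\cL_{\sigma}(v;x^k) = \partial g(v) + \cG x^k + \sigma\cG(\cG^*v-c)$ (the non-$g$ part being smooth, so the sum rule applies exactly), the exact minimizer $\bar v^{k+1}$ is characterized by
\[
0 \in \partial g(\bar v^{k+1}) + \cG x^k + \sigma\cG(\cG^*\bar v^{k+1}-c) + \cT(\bar v^{k+1}-v^k),
\]
whereas \eqref{aug-d} reads $d^k \in \partial g(v^{k+1}) + \cG x^k + \sigma\cG(\cG^*v^{k+1}-c) + \cT(v^{k+1}-v^k)$. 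Subtracting the two inclusions produces subgradients $\zeta\in\partial g(v^{k+1})$ and $\bar\zeta\in\partial g(\bar v^{k+1})$ with $\zeta-\bar\zeta = d^k - (\sigma\cG\cG^*+\cT)(v^{k+1}-\bar v^{k+1})$; here the terms $\cG x^k$ and the constant $c$ cancel out.

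Next I would feed this identity into \eqref{monosub2}. Pairing $\zeta-\bar\zeta$ with $v^{k+1}-\bar v^{k+1}$ and recalling $\cN = \Sigma_g + \cT + \sigma\cG\cG^*$ gives
\[
\inprod{d^k}{v^{k+1}-\bar v^{k+1}} \;\ge\; \norm{v^{k+1}-\bar v^{k+1}}^2_{\Sigma_g} + \norm{v^{k+1}-\bar v^{k+1}}^2_{\sigma\cG\cG^*+\cT} \;=\; \norm{v^{k+1}-\bar v^{k+1}}^2_{\cN}.
\]
On the other hand, since $\cN\succ0$, the Cauchy--Schwarz inequality in the $\cN$-inner product yields $\inprod{d^k}{v^{k+1}-\bar v^{k+1}} = \inprod{\cN^{-1/2}d^k}{\cN^{1/2}(v^{k+1}-\bar v^{k+1})} \le \norm{\cN^{-1/2}d^k}\,\norm{v^{k+1}-\bar v^{k+1}}_{\cN}$. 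Combining the two displays and cancelling one factor of $\norm{v^{k+1}-\bar v^{k+1}}_{\cN}$ (the asserted bound being trivial when this quantity vanishes) gives $\norm{v^{k+1}-\bar v^{k+1}}_{\cN} \le \norm{\cN^{-1/2}d^k}$, and the remaining inequality $\norm{\cN^{-1/2}d^k}\le\varepsilon_k$ is precisely the accuracy requirement imposed on $d^k$ in Algorithm isPALM.

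I do not anticipate a genuine difficulty: the argument is a one-shot combination of the subgradient inequality \eqref{monosub2} with Cauchy--Schwarz. The only point that deserves a little care is the algebraic bookkeeping identifying $\zeta-\bar\zeta$ with $d^k-(\sigma\cG\cG^*+\cT)(v^{k+1}-\bar v^{k+1})$ — i.e. checking that the multiplier and penalty contributions combine so that exactly the operator $\sigma\cG\cG^*+\cT$ (and no residual term in $x^k$ or $c$) survives — and flagging the degenerate case $\norm{v^{k+1}-\bar v^{k+1}}_{\cN}=0$ separately before dividing.
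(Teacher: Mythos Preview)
Your proposal is correct and follows essentially the same route as the paper: write the optimality conditions for $\bar v^{k+1}$ and the inexact condition \eqref{aug-d} for $v^{k+1}$, subtract, invoke the strong monotonicity \eqref{monosub2} to obtain $\norm{v^{k+1}-\bar v^{k+1}}_{\cN}^2 \le \inprod{d^k}{v^{k+1}-\bar v^{k+1}}$, and finish with Cauchy--Schwarz in the $\cN$-inner product. The only cosmetic difference is that you isolate the subgradients $\zeta,\bar\zeta$ explicitly and flag the degenerate case before dividing, whereas the paper leaves these implicit.
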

\begin{proof}
	From \eqref{aug-v} and \eqref{aug-d},  we have for all $k\ge 0$,
	\begin{align*}
		0\in{}&\partial g(\bar v^{k+1}) + \cG x^k + \sigma\cG(\cG^*\bar v^{k+1} - c) + \cT(\bar v^{k+1} - v^k),\\
		d^k\in{}&\partial g(v^{k+1}) + \cG x^k + \sigma\cG(\cG^* v^{k+1} - c) + \cT(v^{k+1} - v^k).
	\end{align*}
	Then, by \eqref{monosub2}, we know that
	\[\inprod{d^k + (\cT + \sigma\cG\cG^*{)}(v^{k+1} - \bar v^{k+1})}{\bar v^{k+1} - v^{k+1}} \ge \norm{\bar v^{k+1} - v^{k+1}}^2_{\Sigma_g}.\]
	By simple calculations, we can obtain that
	\[\norm{\bar v^{k+1} - v^{k+1}}^2_{\Sigma_g + \cT + \sigma\cG\cG^*}\le \inprod{d^k}{\bar v^{k+1} - v^{k+1}},\] i.e.,
	\[\norm{\bar v^{k+1} - v^{k+1}}^2_{\cN}\le \inprod{\cN^{-1/2}d^k}{\cN^{1/2}(\bar v^{k+1} - v^{k+1})}\le \norm{\cN^{-1/2}d^k}\norm{\bar v^{k+1} - v^{k+1}}_{\cN}.\]
	From here, the required result follows directly.
\end{proof}
\begin{proof}[Proof of Proposition \ref{prop:equi-scb-padmm}]
	The positive definiteness of $\widehat\cE$ and the equivalence follows directly from \cite[Theorem 4.1]{ChenipALM}. By Lemma \ref{lemma-distest}, we know that for $k\ge 0$
	\[
	\norm{\widehat\cE^{-1/2}d^k}\le \norm{\cE_d^{-1/2}}\norm{\hat\delta^k - \delta^k} + \norm{\widehat\cE^{-1/2}}\norm{\delta^k}\le \big((2p-1)\norm{\cE_d^{-1/2}}+p\norm{\widehat\cE^{-1/2}}\big)\epsilon_k,
	\]
	which completes the proof.
\end{proof}

\subsection{Proof of Lemma~\ref{lem-Pk}}\label{subsec-lem-Pk}
\begin{proof}
	By the first optimality conditions for the minimax problem in~\eqref{eq-Pk}, we derive that, for any $ (w,y,x)\in P_k(\bar{w},\bar{y},\bar{x}) $
	\begin{eqnarray*}
		0 & = & Q(w-x)+\frac{\tau_k}{\sigma_k}Q(w - \bar{w}), \\
		0 & = & -b + \blue{Ax}+ \frac{\tau_k}{\sigma_k}(y - \bar{y}), \\
		0 & \in & Qw - A^*y+ c+ \partial \delta_{\cC}(x) + \frac{1}{\sigma_k} (x - \bar{x}).
	\end{eqnarray*}
	By the definition of $ \mathcal{T}_{\tilde{l}} $, the above conditions can be written as
	\begin{equation*}
		(\mathcal{T}_{\tilde{l}} + \frac{1}{\sigma_k}\Lambda_k)(w,y,x) - \frac{1}{\sigma_k}\Lambda_k(\bar{w}, \bar{y},\bar{x}) = 0,\quad \forall\,(w,y,x)\in P_k(\bar{w},\bar{y},\bar{x}).
	\end{equation*}
	This establishes~\eqref{eq-lem-Pk}. The last statement in the lemma follows easily by~\eqref{eq-lem-Pk}. This completes the proof.
\end{proof}

\subsection{Proof of Proposition~\ref{prop-inexactcondition}}
\label{subsec-prop-inexactcondition}
\begin{proof}
	It is not difficult to show that
	\begin{equation*}
		(w^{k+1}, y^{k+1}, x^{k+1}) = (\Lambda_k + \sigma_k \mathcal{T}_{\tilde{l}})^{-1} \Lambda_k
		\left( \Lambda_k^{-1}(\sigma_k \nabla \Psi_k(w^{k+1}, y^{k+1}), 0) + (w^k,y^k,x^k) \right).
	\end{equation*}
	Note here that $ \Lambda_k^{-1} $ is well-defined since \blue{$(\sigma_k \nabla\Psi_k(w^{k+1}, y^{k+1}), 0)\in \mathcal{X} $} and $ \Lambda_k $ is positive definite \blue{over} $ \mathcal{ X} $. Then, by Lemma~\ref{lem-Pk}, we have
	\begin{eqnarray*}
		&  & \norm{(w^{k+1}, y^{k+1}, x^{k+1}) - P_k(w^k, y^k, x^k)}_{\Lambda_k} \\
		& = & \norm{(\Lambda_k + \sigma_k \mathcal{T}_{\tilde{l}})^{-1} \Lambda_k
			\left( \Lambda_k^{-1}(\sigma_k \nabla \Psi_k(w^{k+1}, y^{k+1}), 0) + (w^k,y^k,x^k) - (w^k,y^k,x^k) \right) }_{\Lambda_k} \\
		& \leq & \norm{ \Lambda_k^{-1}(\sigma_k \nabla \Psi_k(w^{k+1}, y^{k+1}), 0)  }_{\Lambda_k}  \quad \mbox{(since $ P_k $ is non-expansive) }\\
		& \leq & \frac{\sigma_k}{\min\{1, \sqrt{\tau_k}, \sqrt{\tau_k\blue{\lambda_+(Q)}}\}}\norm{\nabla \Psi_k(w^{k+1}, y^{k+1})},
	\end{eqnarray*}
	as desired.
\end{proof}

\bibliographystyle{ACM-Reference-Format}
\bibliography{QPPAL}

\end{document}